\documentclass[reqno, 15pt]{amsart}
\usepackage{latexsym}
\usepackage{amsmath}
\usepackage{amssymb}
\usepackage{amsthm}
\usepackage{amscd}
\usepackage{graphicx}
\usepackage{xcolor}
\usepackage[all,cmtip]{xy}
\usepackage[colorlinks,citecolor=blue]{hyperref}
\usepackage{tikz}
\usepackage{tikz-cd}
\usepackage{caption} 
\usepackage[a4paper,top=3cm,bottom=3cm,left=3cm,right=3cm]{geometry}
\usepackage{cite}
\usepackage{enumitem}
\usepackage{tabularx}
\usepackage{multirow}
\usepackage{multicol}
\usepackage{longtable}
\usepackage{makecell}

\usepackage{float} 
\newtheorem{theorem}{Theorem}[section]
\newtheorem{corollary}[theorem]{Corollary}
\newtheorem{lemma}[theorem]{Lemma}
\newtheorem{proposition}[theorem]{Proposition}

\newcounter{claimcnt}
\AtBeginEnvironment{proof}{\setcounter{claimcnt}{0}}
\makeatletter
\newenvironment{claim}
{\refstepcounter{claimcnt}%
	\protected@edef\@currentlabel{\thetheorem.\theclaimcnt}%
	\noindent\textbf{Claim \@currentlabel.}\quad}
{\par}
\makeatother

\newtheorem{prob}[theorem]{Problem}
\theoremstyle{definition}
\newtheorem{definition}[theorem]{Definition}
\newtheorem{ex}[theorem]{Example}
\theoremstyle{remark}
\newtheorem{remark}[theorem]{Remark}

\numberwithin{equation}{section}

\DeclareMathOperator{\oh}{\mathcal{O}}
\begin{document}
	
	\title[]{Uniform non-homogeneous bundles on quadrics}
	
	\author{Xinyi Fang}
	\address{Xinyi Fang, Department of Mathematics, Shanghai Normal University, Shanghai, 200234, PR China}
	\thanks{1.Xinyi Fang is supported by National Natural Science Foundation of China (Grant No. 12501057) and
		National Natural Science Foundation of China (Grant No. 12471040)}
	\email{xinyif@shnu.edu.cn}
	\author{Yuhang Zhou}
	\address{Yuhang Zhou, School of Mathematical Sciences,
		University of Science and Technology of China,
		No.96, JinZhai Road Baohe District, 
		Hefei, Anhui, 230026, PR China}
	\thanks{2.Yuhang Zhou is supported by CAS Project for Young Scientists in Basic Research, Grant No. YSBR-032}
	\email{zyh@ustc.edu.cn}

	\begin{abstract} 
Let $X$ be an $n$-dimensional generalized Grassmannian not isomorphic to $\mathbb{P}^n$.
	We prove that $k(X)\le n-1$, where $k(X)$ denotes the maximal integer such that every uniform bundle on $X$ of rank at most $k(X)$ is homogeneous. In particular, for smooth quadrics $\mathbb{Q}^n$, we have $k(\mathbb{Q}^n)=n-1$ for odd $n$, and $n-2\le k(\mathbb{Q}^n)\le n-1$ for even $n$.
	We classify uniform rank $n$ bundles on $\mathbb{Q}^{n}$ for $n=3$, $5$. Furthermore, we characterize projective spaces among generalized Grassmannians in terms of uniform bundles.
			\end{abstract}
		\maketitle
	\noindent\textbf{Keywords:} uniform bundle, generalized Grassmannian, splitting of vector bundles\\
	
	\noindent\textbf{MSC:} 14M15, 14M17, 14J60.

		%We classify uniform $n$-bundles on $\mathbb{Q}^n$ for $n=3$, $5$, and characterize projective spaces among generalized Grassmannians in terms of uniform bundles.
		
		%Let $E$ be a uniform bundle on an arbitrary generalised Grassmannian $X$ defined over $\mathbb{C}$. We show that  if the rank of $E$ is at most $e.d.(\mathrm{VMRT})$, then $E$  necessarily splits. For some generalised Grassmannians, we prove that the upper bounds $e.d.(\mathrm{VMRT})$ are optimal and classify all unsplit uniform bundles of minimal ranks.  Under some special assumptions, we show that  morphisms to some generalised flag varieties must be constant, which partially answered a conjecture of Kumar.

%	\setcounter{tocdepth}{1}\tableofcontents
	
	%=====================================================================
	\section{Introduction}
	
Vector bundles on a projective variety over the complex number field $\mathbb{C}$ are fundamental research objects in algebraic geometry. It is classically known that every vector bundle on the projective line splits as
a direct sum of line bundles. Nevertheless, vector bundles are still mysterious for higher-dimensional projective spaces. Since projective spaces are covered by lines, it is natural to consider the restriction of vector bundles to lines. This motivates the study of \emph{uniform bundles} on projective spaces, whose splitting type is independent of the chosen line. There are numerous classic and recent papers devoted to this topic \cite{Bal, EHS, Ele78, Ele, ellia, sato, ven}. In fact, the concept of uniform bundles can be generalized to the projective varieties
swept by lines, such as generalized Grassmannians \cite{DFG1, DFG2, FLL, FLL2} and some special Fano manifolds \cite{MOS, MOS2}. On projective spaces, there is also another important class of vector bundles, called \emph{homogeneous bundles}. It is obvious that homogeneous bundles are uniform. Conversely, one may ask whether every uniform vector bundle is homogeneous. The answer is negative.\\
	
	It is known from \cite{Bal, Ele78, ellia} that uniform $(n+1)$-bundles on $\mathbb{P}^n$ either split, or are of the form $\operatorname{Sym}^2T_{\mathbb{P}^2}(a)$ or $T_{\mathbb{P}^n}(b)\oplus \oh_{\mathbb{P}^n}(c)$ (by dualizing) for some $a,b,c\in\mathbb{Z}$ and so they are homogeneous. However, in 1979, Elencwajg \cite{Ele} constructed a non-homogeneous uniform $4$-bundle over $\mathbb{P}^2$. Subsequently, Hirschowitz \cite{OSS} provided examples of uniform non-homogeneous bundles of rank $3n-1$ over $\mathbb{P}^n$ for $n\geq 3$. In 1980, Dr{\'e}zet \cite{Dre} proved the existence of uniform non-homogeneous bundles of rank $2n$ over $\mathbb{P}^n$ for $n\geq 3$. These results on projective spaces naturally lead us to the analogous problem on generalized Grassmannians. To this end, we pose the following problem:
	
	\iffalse
	In 1983, Ballico and Ellia (\cite{EP-MP}) demonstrated that the conjecture holds true for $n = 3$. In 2025, R. Du and the second author proved that the conjecture also holds for $n=4$. Their results imply that for $n\le 4$, $2n-1$ is the largest integer for which all uniform bundles of rank at most $2n-1$ on $\mathbb{P}^n$ are homogeneous. These results on projective spaces naturally lead us to the analogous problem on generalized Grassmannians. To this end, we pose the following problem:
	\fi
	
	\begin{prob}\label{prob}
		Determine the largest integer $k=k(X)$%, depending only on the generalized Grassmannian $X$, 
		such that  uniform bundles on a generalized Grassmannian $X$ of rank at most $k$ are homogeneous.
	\end{prob}
	
	Motivated by Problem \ref{prob}, in this paper we study uniform vector bundles on $n$-dimensional smooth quadric hypersurfaces $\mathbb{Q}^n$, aiming to determine the threshold $k(\mathbb{Q}^n)$. In 1983, Fritzsche \cite{fri} proved that uniform $2$-bundles on $\mathbb{Q}^{3}$ either split, or are precisely the bundles $\mathcal{S}(a)$, where $\mathcal{S}$ is the spinor bundle on $\mathbb{Q}^{3}$ and $a\in \mathbb{Z}$. Subsequently, Guyot \cite{guy} obtained an analogous classification for uniform $2$-bundles on $\mathbb{Q}^{4}$. For $n\geq 5$, Kachi and Sato (see \cite[Theorem 4.1]{KS}) proved that every uniform vector bundle on $\mathbb{Q}^n$ of rank at most $n-2$ (resp. $n-3$) for odd (resp. even) $n$ splits. An alternative proof, relying on a general splitting criterion for low-rank uniform bundles on varieties covered by lines, was given by Mu\~{n}oz, Occhetta, and Sol\'{a} Conde (see \cite[Corollary 3.3]{MOS}).\\
	
	%In 2012, Mu\~{n}oz, Occhetta and Sol\'{a} Conde provided an alternative proof via a general splitting criterion for low-rank uniform bundles  on varieties covered by lines (see \cite[Corollary 3.3]{munoz2012uniform}). \\
	
	In 2025, the first author, together with D. Li and Y. Li \cite{FLL}, extended these results through a systematic analysis of uniform bundles on arbitrary generalized Grassmannians. Specifically, they showed that every unsplit uniform $4$-bundle on $\mathbb{Q}^5$ and $\mathbb{Q}^6$ is isomorphic to a twist of a spinor bundle or its dual. Moreover, they showed that for $n\ge 7$, uniform bundles of rank $n-1$ (resp. $n-2$) over $\mathbb{Q}^n$ split when $n$ is odd (resp. even) (see \cite{FLL2} Theorem 3.11). Since the spinor bundles on $\mathbb{Q}^n$ are homogeneous, the above results imply that for every integer $n\ge 3$, all uniform bundles on $\mathbb{Q}^n$ of rank at most $n-1$ for odd $n$ and $n-2$ for even $n$ are homogeneous. It is natural to ask whether every uniform $n$-bundle on $\mathbb{Q}^n$ is homogeneous, which thereby motivates the classification of uniform $n$-bundles.\\
	
	%This naturally leads to the question whether every uniform $n$-bundle on $\mathbb{Q}^n$ is homogeneous, which forms the central open problem treated in the present paper.\\
	
	The main results of our paper are broadly divided into three parts. The first main result is the classification of uniform  bundles on $\mathbb{Q}^{n}$ of rank $n$ for $n=3$, $5$. 
	
	\begin{theorem}\label{main1}
	Let $n=3$ or $5$. Then every uniform bundle on $\mathbb{Q}^{n}$ of rank $n$  (up to dual) either splits, or is isomorphic to
		\[
		\mathcal{S}(a)\oplus \mathcal{O}_{\mathbb{Q}^{n}}(b),~ T_{\mathbb{Q}^{n}}(c)~(a,b,c\in  \mathbb{Z})\]
		or a twist of the kernel of a bundle epimorphism \[\mathcal{L}^{\perp}\longrightarrow \mathcal{O}_{\mathbb{Q}^{n}},\] where $\mathcal{S}$ is the spinor bundle and
		$\mathcal{L}^{\perp}$ is the orthogonal complement of $\mathcal{O}_{\mathbb{Q}^{n}}(-1)$ with respect to the symmetric bilinear form defining $\mathbb{Q}^n$. 
		
		%$\mathcal{L}_{ker}$ is given by an exact sequence\[0\longrightarrow \mathcal{L}_{ker}\longrightarrow \mathcal{L}^{\perp}\longrightarrow \mathcal{O}_{\mathbb{Q}^{n}}\longrightarrow 0,\]
		
	\end{theorem}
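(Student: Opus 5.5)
We follow the standard approach for uniform bundles on varieties covered by lines, working with the family of lines on $\mathbb{Q}^n$. Let $E$ be a uniform rank $n$ bundle on $\mathbb{Q}^n$ with splitting type $(a_1^{\oplus r_1},\dots,a_t^{\oplus r_t})$, $a_1>\dots>a_t$. Let $M$ be the variety of lines with universal family $p:\mathcal{U}\to M$, $q:\mathcal{U}\to\mathbb{Q}^n$. The fibres of $q$ are the VMRT $\mathbb{Q}^{n-2}$; the orthogonal Grassmannian $\mathcal{U}$ is the relevant flag variety. Uniformity gives the relative Harder--Narasimhan filtration of $q^*E$ along the $\mathbb{P}^1$-fibres of $p$. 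Descending its pieces gives a filtration of $q^*E$ on $\mathcal{U}$ whose graded quotients $G_i$ satisfy $G_i\cong q^*\mathcal{O}(a_i)\otimes p^*F_i$ on fibres of $p$. Restricting to a fibre $q^{-1}(x)\cong\mathbb{Q}^{n-2}$, where $q^*E$ is trivial, the quotient $q^*E\to G_t$ defines a morphism from $\mathbb{Q}^{n-2}$ to a Grassmannian $G(n-r_t,n)$. The first step is the rigidity statement. If $r_i$ is small compared to $n-2$, this morphism is constant; in the unsplit case the standard descent argument then shows $E$ splits. This is the mechanism behind the $n-1$ results of \cite{FLL,FLL2}. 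So only a few splitting types survive for rank $n$, namely those where some $r_i$ can produce a nonconstant map $\mathbb{Q}^{n-2}\to G(r,n)$. Such maps come from the tautological bundle of $\mathbb{Q}^{n-2}\subset\mathbb{P}^{n-1}$, from its quotient, or from the spinor bundle. The latter only exists for small $n$, since the rank $2^{\lfloor (n-3)/2\rfloor}$ must be at most $n$. This gives $n=3$ ($\mathbb{Q}^1=\mathbb{P}^1$) and $n=5$ ($\mathbb{Q}^3$ with its rank $2$ spinor bundle).

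Second, I would enumerate, up to twist and dual, the possible restrictions of the filtration to $q^{-1}(x)$. Each is a globally generated bundle of low rank on $\mathbb{Q}^{n-2}$ that is a quotient of a trivial rank $n$ bundle with $c_1>0$. On $\mathbb{Q}^3$ these are, up to trivial summands, $\mathcal{O}(1)$, the spinor bundle $\mathcal{S}^\vee$, and the quotient $\mathcal{O}^{5}/\mathcal{O}(-1)$. The case $n=3$ is handled similarly with bundles on $\mathbb{P}^1$ or a conic. Each case forces a splitting type: $(1,0^{n-1})$-like types from $T$ and $\mathcal{L}^\perp$, and $(1,0,0,\dots)$ with a rank $2$ spinor-type block. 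It also identifies the subbundle of $q^*E$ on each fibre.

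Third, I would reconstruct $E$ in each case. When the fibre map is given by the spinor bundle, the subbundle glues to a rank $2^{\lfloor(n-1)/2\rfloor}$ subsheaf. A Chern class comparison and a vanishing of $H^1$ on $\mathbb{Q}^n$ then show $E\cong\mathcal{S}(a)\oplus\mathcal{O}(b)$, and for $n=5$ one uses rank $4=$ spinor rank plus one. When it is given by the tautological line $\mathcal{O}(-1)\subset\mathcal{O}^{n}$ on $\mathbb{Q}^{n-2}$, the pointwise data identify $E$, after twisting, with a subbundle of $\mathcal{L}^\perp$ or with $T_{\mathbb{Q}^n}$, using the Euler-type sequence $0\to\mathcal{O}(-1)\to\mathcal{L}^\perp\to T(-1)\to0$ of rank $n+1$. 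The rank $n$ bundle is either $T(c)$ or the kernel of a surjection $\mathcal{L}^\perp\to\mathcal{O}$; these kernels are the non-homogeneous examples, and one must show every such $E$ arises this way.

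The main obstacle is this last identification. One must produce a global map $E\to\mathcal{L}^\perp(c)$ or $\mathcal{L}^\perp(c)\to E$ from fibrewise data, showing that the relevant direct image on $M$ has sections via Bott vanishing on $\mathcal{U}$. One must then show the cokernel is a line bundle $\mathcal{O}$. I would first compute $c_1,c_2$ from the splitting type and the descent to fix the twist. Then I would use $\mathrm{Ext}^1(\mathcal{O}(d),\mathcal{L}^\perp)$ computations from the Euler and spinor sequences to rule out extensions other than the listed ones.
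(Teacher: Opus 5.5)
\textbf{There is a genuine gap: the proposal has no mechanism for the orthogonal kernel bundles, which is the part of the statement that is new.}

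You assume that the relative Harder--Narasimhan pieces, restricted to the VMRT fibres $\mathbb{Q}^{n-2}$, have a fixed isomorphism type, so that ``the pointwise data identify $E$''. For $\mathcal{L}_{\mathrm{ker}}$ this is false.

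\begin{itemize}
\item Every orthogonal kernel bundle satisfies $\mathcal{L}_{\mathrm{ker}}^{\vee}\cong\pi_v^{*}T_{\mathbb{P}^n}(-1)$, where $\pi_v$ is the projection from a point $[v]\notin\mathbb{Q}^n$.
\item On the VMRT at $x$, the line piece $\mathcal{O}(-1)\subset\mathcal{O}^{n}$ is the tautological one only for $x\notin v^{\perp}$. On the hyperplane section $\mathbb{Q}^n\cap v^{\perp}$ it lands in an $\mathcal{O}^{n-1}$.
\item Hence the quotient jumps from $(\mathcal{L}^{\perp})^{\vee}$ to $\mathcal{L}_{\mathrm{ker}}^{\vee}\oplus\mathcal{O}$, as bundles on $\mathbb{Q}^{n-2}$. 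For $n=3$ this is $\mathcal{O}(1)^{\oplus 2}$ versus $\mathcal{O}(2)\oplus\mathcal{O}$ on the conic.
\end{itemize}

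In this branch a constant fibre type is in fact impossible: it yields $T(-1)$, which has the wrong splitting type, or a Chern class contradiction. So your entry ``$\mathcal{O}^5/\mathcal{O}(-1)$'' is not one bundle, and controlling the jumping is the whole problem.

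The paper handles it as follows, working with splitting type $(0,\dots,0,-1)$.
\begin{itemize}
\item It pushes the filtration forward to get $0\to E\to T_X(-1)\to R\to 0$. Here $R$ is the first higher direct image of the rank-$(n-1)$ piece, supported exactly on the jumping locus.
\item It shows $R\cong\mathcal{O}_H$ for a hyperplane section $H$. For $n=3$, this comes from showing that the descended rank-$2$ bundle on $\mathcal{M}\cong\mathbb{P}^3$, twisted by $\mathcal{O}(1)$, is a null correlation bundle. For $n=5$, it takes $c_1,c_2$ from restrictions to planes, inducts on the $\mathbb{Q}^3$ fibres, and shows the degeneracy divisor of $E\to T_X(-1)$ has degree one, with $R$ a line bundle on it.
\item It lifts through the one-dimensional $\mathrm{Ext}^1(T_X(-1),\mathcal{O}_X(-1))$ to an epimorphism $\mathcal{L}^{\perp}\to\mathcal{O}_X$.
\end{itemize}
Your proposed substitute, producing $E\to\mathcal{L}^{\perp}(c)$ from sections obtained by Bott vanishing on $\mathcal{U}$, cannot work. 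None of the bundles on $\mathcal{U}$ built from $E$ is homogeneous.

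\textbf{The enumeration in your second step is incomplete, and the omitted cases must be excluded, not ignored.} Besides $\mathcal{L}_{\mathrm{ker}}^{\vee}\oplus\mathcal{O}$, a rank-$2$ piece with $c_1=2$ on the $\mathbb{Q}^3$ fibres can be $l_x^{*}N(1)$. This is the pullback of a twisted null correlation bundle under a projection $\mathbb{Q}^3\to\mathbb{P}^3$. It genuinely arises for splitting type $(2,2,1,0,0)$ on $\mathbb{Q}^5$, and excluding it is the hardest part of the paper. The argument compares $c_1$ of the direct image of the top quotient with $c_1(E)$ via Grothendieck--Riemann--Roch, then splits into three cases for the middle piece. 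In the last case a Schur-lemma and degeneracy-locus argument produces a trivial line subbundle of $T_X(-1)$, a contradiction.

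\textbf{Your rigidity step excludes nothing here.} For $n=3,5$ there are nonconstant maps $\mathbb{Q}^{n-2}\to G(r,n)$ for every $0<r<n$. The paper obtains the finite list of splitting types differently:
\begin{itemize}
\item Lemma~\ref{lemma1}: a gap $\ge 2$ gives an extension of known lower-rank uniform bundles, which are split or spinor, hence ACM, so the extension splits.
\item \cite[Proposition 1.2]{AW} and \cite[Proposition 3.5]{FLL} then cut down to the finite list of splitting types.
\item Chern class constraints in $H^{\bullet}(\mathcal{U},\mathbb{Q})$, together with the rank-$4$ classification on $\mathbb{Q}^5$, dispose of $(2,1,0,0,0)$ and $(2,1,0,0,-1)$.
\end{itemize}

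Finally, your explanation of why only $n=3,5$ appear is wrong. The spinor bundle on $\mathbb{Q}^{n-2}$ has rank $2^{\lfloor (n-3)/2\rfloor}\le n$ also for $n=7,9$.
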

	
	For any quadric $\mathbb{Q}^{n}$, let $\mathcal{L}^{\perp}$ be the orthogonal complement of $\mathcal{O}_{\mathbb{Q}^{n}}(-1)$ introduced above. For each bundle epimorphism
	 $\mathcal{L}^{\perp}\longrightarrow \mathcal{O}_{\mathbb{Q}^{n}}$, its kernel will be called an \emph{orthogonal kernel bundle}, %all of which are uniformly 
	denoted by $\mathcal{L}_{\mathrm{ker}}$. In this paper we show that $\mathcal{L}_{\mathrm{ker}}$ are non-homogeneous. Thus, for $n\ge 4$, the bundles $\mathcal{L}_{\mathrm{ker}}$ on $\mathbb{Q}^{2n-1}$ provide the \textbf{first known examples} of unsplit uniform bundles of minimal rank that fail to be homogeneous among all generalized Grassmannians.
	
	Next, we turn our attention to Problem \ref{prob}. By the classical classification of low-rank uniform bundles on $\mathbb{P}^n$, we know $k(\mathbb{P}^n)\ge n+1$. 
	For a generalized Grassmannian $X$ not isomorphic to $\mathbb{P}^n$, we establish an upper bound for $k(X)$. Combining this with the classification of uniform bundles on $\mathbb{Q}^{2n-1}$ and $\mathbb{Q}^{2n}$ of rank less than $2n-1$, we obtain the following theorem.

	\begin{theorem}\label{main2}
Let $X$ be a generalized Grassmannian. Suppose that $X$ is not isomorphic to a projective space. Then 
\[
k(X)\le \dim X-1.
\]
In particular, $k(\mathbb{Q}^{2n-1}) = 2n-2$ and $2n-2\le k(\mathbb{Q}^{2n})\le 2n-1$.
\end{theorem}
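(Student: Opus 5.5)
To prove $k(X)\le \dim X-1$, I will exhibit on every generalized Grassmannian $X=G/P\not\cong\mathbb{P}^N$ a uniform, non-homogeneous bundle of rank $\dim X$. The natural candidate generalizes the orthogonal kernel bundle. Take a minimal homogeneous embedding $X\subset\mathbb{P}(V)$ and the tangent bundle $T_X$, or better the restricted Euler-type bundle. Let $\mathcal{E}$ be the homogeneous bundle of rank $\dim X+1$ with $0\to\oh_X\to \mathcal{E}\to T_X(-1)\to 0$, i.e.\ the restriction of the Euler sequence giving the first jet bundle $J^1(\oh_X(1))^\vee$ twisted. Its restriction to a line $\ell$ has a splitting type that does not depend on $\ell$. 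Since all lines are in one orbit, $T_X|_\ell\cong\oh(2)\oplus\oh(1)^{p}\oplus\oh^{q}$ with $p$ fixed by the VMRT dimension.

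The idea is to choose a \emph{general} (non-equivariant) epimorphism from a uniform bundle of rank $\dim X+1$ onto a line bundle, or a general monomorphism from a line bundle, such that the kernel (or cokernel) $K$ has rank $\dim X$. It must satisfy two conditions:

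(a) the map restricted to every line $\ell$ is still surjective with kernel of constant splitting type, so $K$ is uniform;

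(b) $K$ is not homogeneous.

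On quadrics this is exactly $\mathcal{L}_{\ker}\subset\mathcal{L}^\perp$, with $\mathcal{L}^\perp|_\ell\cong\oh(-1)\oplus\oh^{n-1}\oplus\oh(1)$. For a section $s\in H^0(\mathcal{L}^{\perp\vee})=V^\vee$, i.e.\ a linear form nonvanishing on the relevant locus, the kernel on each line is determined by degree considerations. For general $X$ I would use the analogous construction with $\mathcal{E}=$ the restriction to $X$ of the tautological quotient $\Omega_{\mathbb{P}(V)}(1)$-type bundle, whose splitting type on lines is $\oh(-1)\oplus\oh^{m}$, and cut it by a general section of a suitable line bundle. Failing that, I would use a rank $\dim X$ bundle pulled back through the VMRT structure. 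Uniformity follows because, in a splitting $\oh(a_1)\oplus\cdots$ restricted to $\ell$, a surjection onto $\oh$ must kill a fixed summand once the degrees are forced.

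Non-homogeneity (b) is the main obstacle. My plan is to argue that a homogeneous bundle $K$ would be invariant under the connected group $G$ (or $\mathrm{Aut}^0(X)$), so $K\cong g^*K$ for all $g$. Then the extension class or the map $s$ would be unique up to $G$-action and scalars. Since the space of possible $s$ (an open set in $V^\vee$ or $H^0$) modulo $G$ and isomorphisms of $K$ is positive-dimensional, a general $s$ gives non-isomorphic kernels, but a homogeneous one must be $G$-fixed. More concretely, I would compute that $H^0(K^\vee)$ or $H^0(K)$ detects $s$: recover $s$ from $K$ as the unique, up to scalar, quotient map $\mathcal{E}\to\oh$ with kernel $K$ via $\mathrm{Hom}(\mathcal{E},\oh)$ and $\mathrm{Hom}(K,\mathcal{E})$. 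Then homogeneity would force $[s]\in\mathbb{P}(V^\vee)$ to be $G$-fixed, impossible since $V$ is irreducible and nontrivial.

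The statements for quadrics then follow. The lower bounds $k(\mathbb{Q}^{2n-1})\ge 2n-2$ and $k(\mathbb{Q}^{2n})\ge 2n-2$ come from the cited results of Kachi–Sato, Fritzsche, Guyot and \cite{FLL,FLL2}, combined with homogeneity of spinor bundles. For odd dimension, Theorem \ref{main1} and the above give $2n-2$ exactly once $\mathcal{L}_{\ker}$ is shown non-homogeneous on $\mathbb{Q}^{2n-1}$.
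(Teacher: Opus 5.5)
\textbf{Verdict: the quadric case is fine, but there is a genuine gap for general $X$.} Your strategy is the paper's: exhibit a uniform, non-homogeneous bundle of rank $\dim X$ that generalizes $\mathcal{L}_{\mathrm{ker}}$. On quadrics your construction is exactly $\mathcal{L}_{\mathrm{ker}}\subset\mathcal{L}^{\perp}=\Omega_{\mathbb{P}^{n+1}}(1)|_{\mathbb{Q}^n}$, with two slips: the extension is $0\to\mathcal{O}(-1)\to\mathcal{L}^{\perp}\to T_{\mathbb{Q}^n}(-1)\to 0$, and $\mathcal{L}^{\perp}|_\ell\cong\mathcal{O}_\ell^{\oplus n}\oplus\mathcal{O}_\ell(-1)$ by Proposition \ref{prop1}(4). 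It is this single negative summand that forces the splitting type of the kernel.

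\textbf{The gap.} For $X\subset\mathbb{P}(V)=\mathbb{P}^N$, the bundle $\Omega_{\mathbb{P}^N}(1)|_X$ has rank $N$. Cutting it by \emph{one} section gives rank $N-1$, which equals $\dim X$ only when $X$ is a hypersurface, i.e.\ a quadric. For $Gr(2,5)\subset\mathbb{P}^9$ you get rank $8>6$, and hence only $k(X)\le N-2$.

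Your rank-$(n+1)$ candidate $J^1(\mathcal{O}_X(1))^\vee$ does not repair this. On every line it restricts to $\mathcal{O}(-1)^{\oplus(n+1-i_X)}\oplus\mathcal{O}^{\oplus i_X}$, where $i_X$ is the Fano index. By Kobayashi--Ochiai, $n+1-i_X\ge 2$ whenever $X$ is neither $\mathbb{P}^n$ nor a quadric. So nothing prevents a surjection onto $\mathcal{O}$ from vanishing on the trivial part along some line. There it is given by at least two linear forms without common zero, and its kernel contains $\mathcal{O}(-2)$. Your ``degrees are forced'' argument breaks down, and uniformity is no longer automatic. The ``VMRT pullback'' fallback is not a construction.

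\textbf{The missing step.} You need to cut $N-n$ times. Take a general $W\subset V$ with $\dim W=N-n$ and $\mathbb{P}(W)\cap X=\emptyset$, and set $E=T_{\mathbb{P}^N}(-1)|_X/(W\otimes\mathcal{O}_X)$. The paper builds exactly this iteratively in Example \ref{ex:class-E}, using global generation and \cite[Corollary 5.5]{EH} at each step. Uniformity survives because a nowhere-vanishing section of $\mathcal{O}_\ell(1)\oplus\mathcal{O}_\ell^{\oplus m}$ must have nonzero constant part.

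\textbf{Adapting non-homogeneity.} Your argument must then handle the subspace $W$ instead of a point $[s]$. Likewise, ``the kernel recovers $s$ up to scalar'' (Lemma \ref{lemma}, which relies on simplicity of $\mathcal{L}^\perp$) must become ``$E$ recovers $W$''. This can be done. The evaluation sequence $0\to\mathcal{O}_X(-1)\to H^0(E)\otimes\mathcal{O}_X\to E\to 0$ identifies $H^0(E)^\vee$ with $W^{\perp}\subset H^0(\mathcal{O}_X(1))$. If $E$ were homogeneous, $W^\perp$ would be a $G$-stable subspace of dimension $n+1<N+1$ of the irreducible $G$-module $H^0(\mathcal{O}_X(1))$, which is impossible.

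\textbf{Comparison with the paper.} The paper uses the same evaluation sequence differently. It obtains a morphism $\Phi\colon X\to\mathbb{P}(H^0(E))\cong\mathbb{P}^n$ that would be $G$-equivariant, finite and surjective. That forces $\mathbb{P}^n\cong G/Q$ with $P_k\subset Q$, hence $X\cong\mathbb{P}^n$ (Theorem \ref{non-homo2}). Your irreducibility argument is a legitimate and slightly more direct alternative for this last step, but only once the construction actually produces rank $\dim X$.
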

	
\iffalse
	\begin{theorem}[Theorem \ref{non-homo}]
		Every orthogonal kernel bundle $\mathcal{L}_{\mathrm{ker}}$ on $\mathbb{Q}^{n}~(n\ge 3)$ is a uniform non-homogeneous bundle of rank $n$. 
	\end{theorem}
	
	Together with the classification of uniform bundles on $\mathbb{Q}^{2n-1}$ and $\mathbb{Q}^{2n}$ of rank less than $2n-1$, Theorem \ref{main2} determines $k(\mathbb{Q}^{2n-1}) = 2n-2$ exactly and provides the bound $2n-2\le k(\mathbb{Q}^{2n})\le 2n-1$. Notably, this yields 
a complete solution to Problem \ref{prob} for odd-dimensional quadrics.\\
	\fi
	%By the results of ???,all uniform vector bundles of rank less than $2n-1$ on $\mathbb{Q}^{2n-1}$ are homogeneous. Therefore, combining with Theorem \ref{thm1}, we provide a complete solution to Problem \ref{prob} for odd-dimensional quadrics, namely $k(\mathbb{Q}^{2n-1})=2n-2$.
	
	Finally, we give a new characterization of projective spaces from the perspective of uniform vector bundles.
	\begin{theorem}%[Theorem \ref{projective}]
	Let $X$ be a generalized Grassmannian of dimension $n\ge 2$. Then $X\cong\mathbb{P}^n$ if and only if $T_X$ is the unique unsplit uniform bundle on $X$ of minimal rank, up to dual and twist.
	\end{theorem}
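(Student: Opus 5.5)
We prove the two implications separately; the main work is in the ``if'' direction.

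\emph{``Only if''.} Assume $X\cong\mathbb{P}^n$ with $n\ge 2$. Every uniform bundle of rank $r<n$ on $\mathbb{P}^n$ splits, by the classical results of Van de Ven, Sato and Elencwajg--Hirschowitz--Schneider \cite{ven, sato, EHS}. For rank $n$, the same classical classification shows that a uniform unsplit $n$-bundle is, up to dual and twist, $T_{\mathbb{P}^n}$. For $n=2$, the rank $2$ case is Van de Ven's classification. Hence the minimal rank of an unsplit uniform bundle on $\mathbb{P}^n$ is $n$, and $T_{\mathbb{P}^n}$ is the unique such bundle up to dual and twist. For this direction we only need to cite these results.

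\emph{``If''.} Suppose $X$ is a generalized Grassmannian with $X\not\cong\mathbb{P}^n$. We must show that the hypothesis fails. It is enough to exhibit one unsplit uniform bundle $F$ with
\[
\operatorname{rk} F<n, \quad\text{or}\quad \operatorname{rk}F=n \text{ and } F\not\cong T_X(a),\,\Omega_X(a) \text{ for all } a.
\]
Since $X$ is rational homogeneous of Picard number one, $X=G/P$ with $P$ a maximal parabolic. Every irreducible homogeneous bundle $E_\lambda$ coming from a representation of the Levi factor of $P$ is uniform, because $G$ acts transitively on lines. I would take $F$ to be the homogeneous bundle attached to the minimal nontrivial irreducible representation of the semisimple part of the Levi factor that is not a character; the natural candidates are tautological bundles, spinor bundles, or the analogous bundles in the exceptional cases. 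This $F$ is unsplit, since irreducibility of the representation forbids a splitting into line bundles. Its rank is the dimension of a minimal representation of the Levi. I then compare that rank with $\dim X$ by cases:
\begin{itemize}
\item For Grassmannians $G(k,m)$ with $2\le k\le m-2$, the universal subbundle has rank $k<k(m-k)=\dim X$.
\item For quadrics $\mathbb{Q}^n$, the spinor bundle has rank $2^{\lfloor (n-1)/2\rfloor}$. This is below $n$ only for small $n$. In general one uses instead the orthogonal kernel bundle $\mathcal{L}_{\mathrm{ker}}$ of rank $n$, which is uniform, unsplit and not a twist of $T_{\mathbb{Q}^n}$ or its dual. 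It is not even homogeneous, while $T_{\mathbb{Q}^n}$ is. Alternatively, for odd $n$ one can observe that $T_{\mathbb{Q}^n}$ itself has splitting type different from $\mathcal{L}_{\mathrm{ker}}$, which also gives non-uniqueness.
\item For the remaining classical and exceptional $G/P$, the tautological or minimal homogeneous bundles have rank strictly less than $\dim X$, which we verify by a table check. For example, the tautological bundle on isotropic Grassmannians, or the rank-$6$ or rank-$10$ type bundles on $E_6/P_1$, of dimension $16$.
\end{itemize}

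\emph{Main obstacle.} The hardest part is the uniform argument that the chosen $F$ is genuinely non-isomorphic to $T_X$ or $\Omega_X$ up to twist, especially in cases where $\operatorname{rk}F=\dim X$. I would handle this with splitting types. On a line, $T_X$ restricts as $\mathcal{O}(2)\oplus\mathcal{O}(1)^{p}\oplus\mathcal{O}^{q}$, and the same comparison applies to its dual. Comparing this with the splitting type of $F$, computed from the weights of the Levi representation along the $\mathfrak{sl}_2$ of a line, distinguishes them. When $\operatorname{rk}F<\dim X$, the minimality hypothesis already fails and no comparison is needed. So only the quadric case, with rank exactly $n$, needs this comparison.
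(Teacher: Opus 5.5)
\textbf{There is a genuine gap in your ``if'' direction.} You claim that outside the quadrics some unsplit homogeneous bundle of rank $<\dim X$ always exists (``table check''), so that only quadrics need the rank-$n$ comparison. This fails for the Freudenthal variety $X=E_7/P_7$, which has dimension $27$.

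The semisimple part of the Levi factor is $E_6$. Its smallest nontrivial irreducible representations are the two $27$-dimensional ones; the next has dimension $78$. Since $X$ is Hermitian symmetric, these two representations give exactly $T_X$ and $\Omega_X$ up to twist. Every homogeneous bundle of rank $\le 27$ is filtered by irreducible homogeneous bundles, so it falls into one of two cases:
\begin{itemize}
\item all factors are line bundles, and then it splits because $H^1(X,\mathcal{O}_X(t))=0$ for all $t$;
\item it is irreducible of rank $27$, i.e.\ $T_X(a)$ or $\Omega_X(a)$.
\end{itemize}
So no homogeneous bundle can violate the uniqueness hypothesis on $E_7/P_7$. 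The theorem there genuinely needs a non-homogeneous uniform bundle, which your strategy cannot produce.

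The general principle you rely on also breaks elsewhere.
\begin{itemize}
\item $G$ acts transitively on lines only when $\alpha_k$ is long. For short roots, homogeneous bundles need not be uniform. For example, on $IG(k,2n)=C_n/P_k$ with $2\le k\le n-1$, the irreducible bundle $U^{\perp}/U$ has splitting type $(1,0,\dots,0,-1)$ on lines $\{V_{k-1}\subset V\subset V_{k+1}\}$ with $V_{k+1}$ isotropic, and is trivial on the other lines. Hence $F_4/P_3$ and $F_4/P_4$ need arguments your table check does not supply.
\item Irreducibility of the $P$-module only rules out equivariant splittings. To conclude that the bundle is unsplit you need something like stability of irreducible homogeneous bundles (Ramanan).
\item $E_6/P_1$ carries no rank-$6$ homogeneous bundle; the minimal one has rank $10$.
\end{itemize}

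The paper avoids all case distinctions by doing on every $X$ what you did on quadrics. Embed $X\subset\mathbb{P}^N$ by $\mathcal{O}_X(1)$. Since lines of $X$ are lines of $\mathbb{P}^N$, the bundle $T_{\mathbb{P}^N}(-1)|_X$ is globally generated and uniform of type $(1,0,\dots,0)$. Quotienting successively by $N-n$ general nowhere-vanishing sections (possible while the rank exceeds $n$) gives a rank-$n$ uniform bundle $E$ with $c_i(E)=H^i$; since $c_2(E)\neq 0$, $E$ is unsplit. If $E(a)$ or $E^{\vee}(b)$ were $T_X$, the unique $\mathcal{O}(2)$ in the splitting type of $T_X$ forces $T_X\cong E(1)$. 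Then $X$ has index $n+1$, and Kobayashi--Ochiai gives $X\cong\mathbb{P}^n$. On quadrics this $E$ is $\mathcal{L}_{\mathrm{ker}}^{\vee}$, and on $E_7/P_7$ it is exactly the non-homogeneous witness your approach lacks.
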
 
	\section{Preliminaries}\label{pre}
	\subsection{Rational homogeneous spaces and uniform bundles}\label{sec1}
	
	We first review some well-known facts on rational homogeneous spaces and refer to \cite{DFG1, LM, ott} for more details.
	
Let $G$ be a connected simple complex Lie group, $B$ a Borel subgroup of $G$ and $T$ a maximal torus. Denote by $\Phi$ the root system of $G$ determined by $(G,T)$ with a base $\Delta=\{\alpha_1,\ldots,\alpha_n\}$ of simple roots and the subset $\Phi^+$ of positive roots. A closed subgroup $P\subset G$ is called parabolic if it contains some Borel subgroup of $G$. There is a one-to-one correspondence between the subsets $I\subset \{1,2,\ldots,n\}$ and parabolic subgroups $P_I$. The quotient $G/P_I$ is called a \emph{rational homogeneous space}. In particular, when $I=\{k\}$, $P_k$ is a maximal parabolic subgroup of $G$ and $G/P_k$ is called a \emph{generalized Grassmannian}, also called a rational homogeneous space of Picard number $1$. For example, $A_n/P_k$ is the usual Grassmannian $Gr(k,n+1)$.

Given a generalized Grassmannian $X$, we will consider families of lines $\mathcal{M}$ on $X$. Denote by $\mathcal{U}$ the universal family, which has a natural $\mathbb{P}^{1}$-bundle structure over $\mathcal{M}$, i.e. we have the following natural diagram
\begin{align}\label{diagram}
	\xymatrix{
		\mathcal{U}\ar[d]^{p}   \ar[r]^-{q} & \mathcal{M}\\
		X.
	}
\end{align}
Landsberg and Manivel describe $X$, $\mathcal{U}$ and $\mathcal{M}$ by using the marked Dynkin diagram. To be more specific, assume $X$ is $G/P_k$ and let $N(k)$ be the nodes in $\mathcal{D}(G)$ connecting the $k$-th node $\alpha_k$ in $\mathcal{D}(G)$. When $\alpha_k$ is a
long root, $\mathcal{U}$ is $G/P_{k\cup N(k)}$ and $\mathcal{M}$ is $G/P_{N(k)}$. When $\alpha_k$ is a short root, $\mathcal{M}$ is the union of two orbits, an open
orbit and its boundary $G/P_{N(k)}$ (see \cite[Theorem 4.3]{LM} ).\\

Given a vector bundle $E$ on $X$, we say that $E$ \emph{splits} if it decomposes as a direct sum of line bundles, and $E$ is \emph{unsplit} otherwise. In this paper, we are interested in bundles whose splitting type on lines is constant.
 
	\begin{definition}\label{defi}
		A vector bundle $E$ of rank $r$ on $X=G/P_k$ is called uniform if the restriction of $E$ to every line $L$ is isomorphic to $\oh_L(a_1)\oplus\cdots\oplus\oh_L(a_r)$ with  $a_1\geq \cdots \geq a_r$ and $(a_1,\dots,a_r)$ is independent of $L$. The $r$-tuple $(a_1,\dots,a_r)$ is called the splitting type of $E$. 
		%A vector bundle $E$ on $G/P_k$ is called uniform if the restriction of $E$ to every line on $G/P_k$ splits as a direct sum of line bundles with the same splitting type. Moreover, $E$ is said to be uniform with respect to the special family of lines if the same holds for restrictions to every line given by $\mathcal{M}$.
	\end{definition}
	
	%\begin{remark}When $\alpha_k$ is a long root, uniform with respect to the special family of lines is equivalent to uniform in the usual sense. By contrast, if $\alpha_k$ is a short root, the two notions differ, we refer the reader to \cite{} for further details.\end{remark}
	
	\subsection{Homogeneous bundles}
Now we introduce an important class of vector bundles on a rational homogeneous space $G/P$.

\begin{definition}
	Over $G/P$, a vector bundle $E$ is called \emph{homogeneous} if there exists an action of $G$ over $E$ such that the following diagram commutes
	
	\centerline{
		\xymatrix{   G\times E \ar[r]\ar[d]& E\ar[d]\\
			G\times G/P \ar[r] & G/P. }}
\end{definition}
In fact, the category of $P$-modules is equivalent to that of homogeneous bundles on $G/P$. Given an irreducible $P$-module $V$, the associated bundle $E=G\times_{P}V$ is called an \emph{irreducible homogeneous vector bundle}. 
Let $\Delta=\{\alpha_1,\ldots,\alpha_n\}$ be the set of simple roots and $\lambda_{1},\ldots,\lambda_{n}$ the corresponding fundamental weights. Let $P$ be the parabolic subgroup of $G$ defined by $I\subset \{1,2,\ldots,n\}$. Then all irreducible $P$-modules can be classified as follows (cf. \cite[Proposition 10.9]{ott}):
\[V\otimes \bigotimes_{i\in I}L^{n_i}_{\lambda_i},\]
where $V$ is a representation of $S_P$ (the semisimple part of $P$), $n_i\in\mathbb{Z}$ and $L_{\lambda_i}$ is the one-dimensional representation of $P$ with weight $\lambda_i$.
Notice that the weight lattice of $S_P$ can be embedded in the weight lattice of $G$. If $\lambda'$ is the highest weight of $V$ as a $S_P$-module, we will say that $\lambda'+\sum\limits_{i\in I}{n_i}{\lambda_i}$ is the highest weight of the $P$-module $V\otimes \bigotimes_{i\in I}L^{n_i}_{\lambda_i}$. \\

In this paper, we denote by $E_\lambda$ the irreducible homogeneous bundle corresponding to the irreducible $P$-module with highest weight $\lambda$. Write $\lambda=\sum \limits_{i=1}^{n} a_i\lambda_i$. Then $a_i\geq0$ for $i\notin I$, as the highest weight of any irreducible $S_P$-module is dominant.
The powerful tool to compute the cohomology of irreducible homogeneous bundles over $G/P$ is the following theorem. First, some definitions are presented.
	%%%%%%%%%%%%%%%%%%
	\begin{definition}\ Let $\lambda$ be a weight.\\
		(i)\ $\lambda$ is called \emph{singular} if there exists $\alpha \in \Phi^{+}$ such that $(\lambda,\alpha)=0$.\\
		(ii)\ $\lambda$ is called \emph{regular of index} $p$ if it is not singular and if there are exactly $p$ roots $\alpha_{1},\ldots,\alpha_{p} \in \Phi^{+}$ such that $(\lambda,\alpha_{i})<0$.
	\end{definition}
	
	%Now we can introduce the Borel--Bott--Weil Theorem.
	
	\begin{theorem}[Borel--Bott--Weil, see \cite{ott} Theorem 11.4]\label{borel bott weil} Let $E_\lambda$ be an irreducible homogeneous vector bundle over $G/P.$
		\begin{enumerate}
			\item[1)] If $\lambda+\rho$ is singular, then $$H^i(G/P,E_\lambda)=0, \forall i\in\mathbb{Z}.$$
			\item[2)] If $\lambda+\rho$ is regular of index $p$, then $$H^i(G/P,E_\lambda)=0, \forall i\neq p,$$
			and $$H^p(G/P,E_\lambda)=G_{w(\lambda+\rho)-\rho},$$ where $\rho=\sum\limits_{i=1}^n\lambda_i$, $w(\lambda+\rho)$ is the unique element of the fundamental Weyl chamber of $G$ which is congruent to $\lambda+\rho$ under the action of the Weyl group and $G_{w(\lambda+\rho)-\rho}$ is the irreducible representation of $G$ with highest weight $w(\lambda+\rho)-\rho$.
		\end{enumerate}
	\end{theorem}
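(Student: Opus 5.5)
This is the classical Borel--Bott--Weil theorem, which the paper cites from \cite{ott}. The plan below reduces it to the case of line bundles on the full flag variety $G/B$ and then proves that case by Demazure's reflection argument.

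\emph{Step 1: reduction to $G/B$.} Let $\pi\colon G/B\to G/P$ be the natural projection. Its fibres are isomorphic to $P/B$, which is the full flag variety of the Levi (equivalently, of the semisimple part $S_P$). Let $L_\lambda$ be the line bundle on $G/B$ attached to the weight $\lambda$. Since $\lambda$ is dominant for $S_P$, the Borel--Weil theorem on the fibre $P/B$ gives two facts:
\begin{itemize}
\item $\pi_*L_\lambda$ is the homogeneous bundle whose fibre is the irreducible $P$-module of highest weight $\lambda$, namely $E_\lambda$;
\item $R^i\pi_*L_\lambda=0$ for $i>0$, because Kodaira vanishing applies on the fibre (or by induction on the fibre dimension).
\end{itemize}
The Leray spectral sequence then gives $H^i(G/P,E_\lambda)\cong H^i(G/B,L_\lambda)$. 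So it suffices to prove the statement for line bundles on $G/B$.

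\emph{Step 2: the dominant case.} Suppose $\lambda$ is dominant. Then $H^0(G/B,L_\lambda)$ is the irreducible $G$-module $G_\lambda$; the matching of highest weight versus dual follows the paper's conventions. This follows by decomposing the regular functions on $G$ via Peter--Weyl and taking $B$-equivariant sections. Higher cohomology vanishes by Kempf vanishing, or by Kodaira vanishing when $\lambda$ is ample after twisting by $\rho$.

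\emph{Step 3: the reflection step (Demazure).} Fix a simple root $\alpha$ and the $\mathbb{P}^1$-fibration $p_\alpha\colon G/B\to G/P_\alpha$. Set $m=\langle\lambda+\rho,\alpha^\vee\rangle$; restricted to a fibre, $L_\lambda$ has degree $m-1$. The claim to prove is:
\begin{itemize}
\item if $m=0$, then $Rp_{\alpha*}L_\lambda=0$, so all cohomology vanishes;
\item if $m<0$, then $H^i(L_\lambda)\cong H^{i-1}(L_{s_\alpha\cdot\lambda})$, where $s_\alpha\cdot\lambda=s_\alpha(\lambda+\rho)-\rho$.
\end{itemize}
For $m<0$ the argument goes through relative duality on the $\mathbb{P}^1$-bundle. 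One has $R^1p_{\alpha*}L_\lambda\cong\big(p_{\alpha*}(L_{-\lambda}\otimes\omega_{p_\alpha})\big)^\vee$, and since $\omega_{p_\alpha}=L_{-\alpha}$ this is a $P_\alpha$-homogeneous bundle of rank $-m$. One then identifies it with $p_{\alpha*}L_{s_\alpha\cdot\lambda}$ by comparing weights of the two $P_\alpha$-modules. This weight comparison is the step I expect to be the main obstacle: it requires writing both modules as $SL_2$-representations twisted by characters and checking that they match.

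\emph{Step 4: induction.} If $\lambda+\rho$ is singular, some Weyl group element $w$ carries it to a wall of the dominant chamber. Applying reflection steps along a reduced word for $w$ eventually produces a weight with $m=0$ for some simple root, so all cohomology vanishes. If $\lambda+\rho$ is regular of index $p$, apply simple reflections to $\lambda+\rho$ whenever the pairing with the chosen simple root is negative. Each such reflection lowers the number of positive roots pairing negatively with it by exactly one. After $p=\ell(w)$ steps we reach $w(\lambda+\rho)$, which lies in the fundamental chamber. Each step shifts the cohomological degree by one, so the only nonzero group is $H^p(G/B,L_\lambda)\cong H^0(G/B,L_{w(\lambda+\rho)-\rho})=G_{w(\lambda+\rho)-\rho}$ by Step 2. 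Combined with Step 1, this proves the theorem.
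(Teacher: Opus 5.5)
Your proposal is correct. It is the standard self-contained proof of Bott's theorem in Demazure's style. You reduce to line bundles on $G/B$ by pushing forward along $\pi\colon G/B\to G/P$, then move $\lambda+\rho$ into the dominant chamber one simple reflection at a time using the $\mathbb{P}^1$-fibrations $G/B\to G/P_\alpha$.

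The paper gives no argument of its own. It quotes the theorem from Ottaviani's notes and uses it purely as a computational black box: for $h^1(T_{\mathbb{Q}^n}(-2))=1$, for the vanishing of $H^\bullet(T_{\mathbb{Q}^n}(-1))$ and of $H^0(T_X(-1))$, for the cohomology of $\mathrm{Sym}^2E_{\lambda_i}(a)$ on $\mathbb{Q}^4$, and so on. It also cites separately the identification $E_\lambda\cong\pi_*L_\lambda$ from \cite[Proposition 10.13]{ott}, which your Step 1 in effect reproves. Citing buys brevity for a classical result; your route makes that input self-contained.

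A few points should be tightened, none of them a gap.
\begin{itemize}
\item \emph{The Kodaira argument in Steps 1--2.} It reads $L_\lambda\cong\omega_{G/B}\otimes L_{\lambda+2\rho}$ with $\lambda+2\rho$ strictly dominant, hence ample. So the twist is by $2\rho$, not $\rho$. On the fibre $P/B$, use the sum of the positive roots of the Levi in place of $2\rho$.
\item \emph{The weight comparison in Step 3.} This is the step you flag as the main obstacle, but it is routine. The unipotent radical of $P_\alpha$ acts trivially on fibre cohomology. $H^1$ of the canonical bundle of $P_\alpha/B\cong\mathbb{P}^1$ is the trivial module, via the trace map. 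So by Serre duality, $H^1(P_\alpha/B,L_\lambda)$ and $H^0(P_\alpha/B,L_{s_\alpha\cdot\lambda})$ are both irreducible Levi-modules with weights $\lambda+j\alpha$, $1\le j\le -m$. This uses the convention where $H^0(P_\alpha/B,L_\mu)$ has highest weight $\mu$. Finite-dimensional modules of a connected reductive group are determined by their characters, so the two modules are isomorphic.
\item \emph{The singular case of Step 4.} Step 3 as stated only covers $m\le 0$, so say why every step along a reduced word for $w$ has $m\le 0$. The successive pairings are $\langle\lambda+\rho,\gamma^\vee\rangle$, where $\gamma$ runs over the positive roots that $w$ sends to negative roots. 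These pairings are $\le 0$ because $w(\lambda+\rho)$ is dominant. Once the weight is dominant and singular, some simple coroot pairs to zero with it, since every positive coroot is a nonnegative combination of simple coroots. The $m=0$ case then finishes the argument.
\end{itemize}
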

	
	\subsection{Vector bundles on quadrics}\label{sec3}
In this section, we shall concentrate on several important vector bundles on the $n$-dimensional smooth quadric $\mathbb{Q}^n$, which is defined by a non-degenerate symmetric bilinear form $\mathcal{Q}$ on $\mathbb{C}^{n+2}$. Recall that $\mathbb{Q}^{2n-1}=B_n/P_1$ and $\mathbb{Q}^{2n}=D_{n+1}/P_1$.

The tangent bundle on $\mathbb{Q}^n$
is known to be homogeneous and 
\[
T_{\mathbb{Q}^n}=\begin{cases}
	E_{\lambda_2},&\text{if}~X=\mathbb{Q}^n~(n\ge 5),\\
	E_{\lambda_1+\lambda_3},&\text{if}~X=\mathbb{Q}^4,\\
	E_{2\lambda_2},&\text{if}~X=\mathbb{Q}^3.
\end{cases}
\]
Note that $\mathbb{Q}^4 \cong G(2,4)=A_3/P_2$ and the simple Lie algebra of type $D_3$ is exactly the simple Lie algebra of type $A_3$.

%There is another important class of vector bundles called
Another important family consists of the
 \emph{spinor bundles}, constructed from the spin and half-spin representations of the orthogonal Lie algebra. Clearly, they are homogeneous. Specifically, $E_{\lambda_n}$ is the unique spinor bundle on $\mathbb{Q}^{2n-1}$. On $\mathbb{Q}^{2n}$, there are two nonisomorphic spinor bundles $E_{\lambda_{n}}$ and $E_{\lambda_{n+1}}$. We refer to \cite{ott1, ott2} for more details on spinor bundles.

We now introduce the core vector bundle of this paper on $\mathbb{Q}^n$, namely the \emph{$\mathcal{Q}$-orthogonal complement} of $\mathcal{O}_{\mathbb{Q}^{n}}(-1)$, denoted by $\mathcal{L}_n^{\perp}$. We omit the subscript $n$ when there is no ambiguity. As is well known, $T_{\mathbb{Q}^n}(-1)=\mathcal{L}^{\perp}/\mathcal{O}_{\mathbb{Q}^{n}}(-1)$, so $\mathcal{L}^{\perp}$ fits into the exact sequence
\begin{align}\label{ex1}
0\rightarrow  \mathcal{O}_{\mathbb{Q}^{n}}(-1)\rightarrow \mathcal{L}^{\perp}\rightarrow T_{\mathbb{Q}^n}(-1)\rightarrow 0.
\end{align}
We now present several fundamental properties of the bundle $\mathcal{L}^{\perp}$.
\begin{proposition}\label{prop1}
Let $\mathcal{L}^{\perp}$ be the $\mathcal{Q}$-orthogonal complement of $\mathcal{O}_{\mathbb{Q}^{n}}(-1)$ on $\mathbb{Q}^n~(n\ge 3)$. Then
\begin{enumerate}
	\item %The Chern classes of $\mathcal{L}^{\perp}$ satisfy  $c_{n+1}(\mathcal{L}^{\perp})=0$
	$c_i(\mathcal{L}^{\perp})=(-1)^iX_1^i$ for $1\le i\le n$, where $X_1$ is the ample generator of $\mathrm{Pic}(\mathbb{Q}^{n})$;
	\item $\mathcal{L}^{\perp}$ is isomorphic to $\Omega_{\mathbb{P}^{n+1}}(1)|_{\mathbb{Q}^{n}}$;
	\item $\mathcal{L}^{\perp}$ is stable and $(\mathcal{L}^{\perp})^{\vee} $ is generated by global sections;
	\item $\mathcal{L}^{\perp}$ is uniform of splitting type $(0,\ldots,0,-1)$;
	\item All cohomology groups of $\mathcal{L}^{\perp}$ vanish, i.e., $H^i(\mathbb{Q}^{n},\mathcal{L}^{\perp})=0$ for all $i$;
	\item $H^0(\mathbb{Q}^{n},(\mathcal{L}^{\perp})^{\vee})=n+2$ and $H^i(\mathbb{Q}^{n},(\mathcal{L}^{\perp})^{\vee})=0$ for any $i>0$.
\end{enumerate}
\end{proposition}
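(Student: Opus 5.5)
The plan is to establish (2) first and derive everything else from it together with the Euler sequence. Write $V=\mathbb{C}^{n+2}$ and let $\iota:\mathbb{Q}^n\hookrightarrow\mathbb{P}(V)=\mathbb{P}^{n+1}$. The form $\mathcal{Q}$ gives an isomorphism $V\cong V^\vee$. Under it, the fibre $\mathcal{L}^\perp_x=\{v:\mathcal{Q}(v,x)=0\}$ is exactly the kernel of the evaluation map $V\otimes\mathcal{O}\to\mathcal{O}(1)$, $v\mapsto \mathcal{Q}(v,\cdot)|_x$.

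This evaluation map is the restriction to $\mathbb{Q}^n$ of the dual Euler surjection $V^\vee\otimes\mathcal{O}_{\mathbb{P}^{n+1}}\to\mathcal{O}_{\mathbb{P}^{n+1}}(1)$, whose kernel is $\Omega_{\mathbb{P}^{n+1}}(1)$. Hence
\[0\to\mathcal{L}^\perp\to V\otimes\mathcal{O}_{\mathbb{Q}^n}\to\mathcal{O}_{\mathbb{Q}^n}(1)\to0,\]
and this identifies $\mathcal{L}^\perp\cong\Omega_{\mathbb{P}^{n+1}}(1)|_{\mathbb{Q}^n}$, which is (2).

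The remaining items follow from this sequence.

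\emph{Item (1).} From the sequence, $c(\mathcal{L}^\perp)=(1+X_1)^{-1}=\sum(-1)^iX_1^i$, where $X_1$ is the hyperplane class. This gives $c_i=(-1)^iX_1^i$ for $i\le n$.

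\emph{Item (4).} On a line $L\subset\mathbb{Q}^n$, which is also a line of $\mathbb{P}^{n+1}$, we have $\Omega_{\mathbb{P}^{n+1}}(1)|_L\cong\mathcal{O}_L(-1)\oplus\mathcal{O}_L^{n}$, a standard fact. So $\mathcal{L}^\perp$ is uniform of type $(0,\dots,0,-1)$.

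\emph{Item (5).} Take cohomology of the sequence. The map $H^0(V\otimes\mathcal{O})=V\to H^0(\mathcal{O}_{\mathbb{Q}^n}(1))=V^\vee$ is the isomorphism induced by $\mathcal{Q}$; here one uses that $\mathbb{Q}^n$ is linearly normal and not contained in a hyperplane. The higher cohomology of $\mathcal{O}$ and of $\mathcal{O}(1)$ on $\mathbb{Q}^n$ vanishes, by Kodaira vanishing or by Theorem \ref{borel bott weil}. Therefore all $H^i(\mathcal{L}^\perp)$ vanish.

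\emph{Item (6) and global generation in (3).} Dualize to get
\[0\to\mathcal{O}(-1)\to V^\vee\otimes\mathcal{O}\to(\mathcal{L}^\perp)^\vee\to0.\]
This exhibits $(\mathcal{L}^\perp)^\vee$ as a quotient of a trivial bundle, so it is globally generated. Since $H^i(\mathcal{O}(-1))=0$ for all $i$ (Kodaira vanishing, as $n\ge 1$), we get $h^0((\mathcal{L}^\perp)^\vee)=n+2$ and $H^i((\mathcal{L}^\perp)^\vee)=0$ for $i>0$.

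\emph{Stability in (3).} This is the step I expect to be the main obstacle. The bundle has rank $n+1$ and $c_1=-X_1$, so slope $-1/(n+1)$. Since $\mathrm{Pic}=\mathbb{Z}X_1$, Hoppe's criterion reduces stability to $H^0(\wedge^p\mathcal{L}^\perp)=0$ for $1\le p\le n$; no normalization twist is needed since $-1<p\mu<0$ already.

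To obtain this vanishing, take exterior powers of the defining sequence:
\[0\to\wedge^p\mathcal{L}^\perp\to\wedge^pV\otimes\mathcal{O}\to\wedge^{p-1}\mathcal{L}^\perp(1)\to0.\]
Then $H^0(\wedge^p\mathcal{L}^\perp)$ is the kernel of $\wedge^pV\to H^0(\wedge^{p-1}\mathcal{L}^\perp(1))$. A nonzero section would be a $p$-vector $\omega$ with $\omega\wedge$-contraction against $\mathcal{Q}(x,\cdot)$ vanishing for every $x\in\mathbb{Q}^n$. Because $\mathbb{Q}^n$ spans $\mathbb{P}^{n+1}$, the same contraction then vanishes for every $x\in V$, which forces $\omega=0$ for $p\le n+1$.

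An alternative route: $\mathcal{L}^\perp$ is itself homogeneous, since the orthogonal group preserves the flag $\mathcal{O}(-1)\subset\mathcal{L}^\perp\subset V$. It is indecomposable with irreducible pieces $\mathcal{O}(-1)$ and $T(-1)$, so one could instead check that $\mathcal{O}(-1)$ does not destabilize and that $T(-1)$ has larger slope, so the only candidate quotient has larger slope. I would go with the contraction argument, as it avoids the representation theory.
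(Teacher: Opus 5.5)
Your proof is correct, but it is built on a different presentation of $\mathcal{L}^{\perp}$ than the paper's.

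The paper works with the extension $0\to\mathcal{O}_{\mathbb{Q}^n}(-1)\to\mathcal{L}^{\perp}\to T_{\mathbb{Q}^n}(-1)\to0$:
\begin{itemize}
\item For (2), it notes that $\mathcal{L}^{\perp}$ and $\Omega_{\mathbb{P}^{n+1}}(1)|_{\mathbb{Q}^n}$ are both non-split extensions of this shape, and computes $\dim\mathrm{Ext}^1(T_{\mathbb{Q}^n}(-1),\mathcal{O}_{\mathbb{Q}^n}(-1))=h^1(T_{\mathbb{Q}^n}(-2))=1$ by Borel--Bott--Weil.
\item For (5), it uses the Borel--Bott--Weil vanishing of $H^\bullet(T_{\mathbb{Q}^n}(-1))$.
\item For (4), it combines $c_1=-1$ on lines with global generation of the dual.
\item For stability, it simply imports the stability of $T_{\mathbb{P}^{n+1}}|_{\mathbb{Q}^n}$ from \cite{BCM,Liu}.
\end{itemize}

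You use $0\to\mathcal{L}^{\perp}\to V\otimes\mathcal{O}\to\mathcal{O}(1)\to0$ instead. This makes (2) tautological: it is the restricted dual Euler sequence composed with $\mathcal{Q}\colon V\cong V^\vee$. It also reduces (1), (5) and (6) to linear normality and Kodaira vanishing. Most notably, it gives a self-contained proof of stability. Since $p\,\mu(\mathcal{L}^{\perp})=-p/(n+1)\in(-1,0)$, Hoppe's criterion needs only $H^0(\wedge^p\mathcal{L}^{\perp})=0$ for $1\le p\le n$. A constant $p$-vector lying in $\wedge^p(x^{\perp})$ for every $x$ in the cone over $\mathbb{Q}^n$ must vanish, because that cone spans $V$. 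So your route is more elementary and supplies the step the paper outsources.

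What the paper's route gives in exchange is the by-product that $\mathcal{L}^{\perp}$ is the \emph{unique} non-trivial extension of $T_{\mathbb{Q}^n}(-1)$ by $\mathcal{O}_{\mathbb{Q}^n}(-1)$. This is exactly how $\mathcal{L}^{\perp}$ is recognized later in Theorems \ref{uniform Q3} and \ref{uniform Q3 2}. With your approach that one-dimensionality would have to be checked separately; it is not part of the statement, so this is not a gap.

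One harmless slip: $H^n(\mathcal{O}_{\mathbb{Q}^n}(-1))\cong H^0(\mathcal{O}_{\mathbb{Q}^n}(1-n))^{\vee}$ vanishes only for $n\ge2$ (it fails for the conic), not for all $n\ge1$. This does not matter here, since $n\ge3$.
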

 \begin{proof}
 	(1) Note that $c_i(T_{\mathbb{Q}^{n}}(-1))=0$ if $i$ is odd and $c_i(T_{\mathbb{Q}^{n}}(-1))=X_1^i$ if $i$ is even. Then the Chern classes of $\mathcal{L}^{\perp}$ follow at once from the exact sequence (\ref{ex1}) and Whitney's formula.
 	
 (2) Since $\mathbb{Q}^{n}\subset \mathbb{P}^{n+1}$ has a normal bundle $N_{\mathbb{Q}^{n}/\mathbb{P}^{n+1}}=\oh_{\mathbb{Q}^{n}}(2)$, one has the exact sequence $0\rightarrow T_{\mathbb{Q}^{n}} \rightarrow T_{\mathbb{P}^{n+1}}|_{\mathbb{Q}^{n}} \rightarrow \oh_{\mathbb{Q}^{n}}(2) \rightarrow 0$. Tensoring this exact sequence with $\oh_{\mathbb{Q}^{n}}(-1)$ and dualizing, we obtain $0\rightarrow \oh_{\mathbb{Q}^{n}}(-1) \rightarrow \Omega_{\mathbb{P}^{n+1}}(1)|_{\mathbb{Q}^{n}}\rightarrow T_{\mathbb{Q}^{n}}(-1) \rightarrow 0$. (Here we use the isomorphism $T_{\mathbb{Q}^{n}}(-1)\cong \Omega_{\mathbb{Q}^{n}}(1)$.) 
 It follows that $\mathcal{L}^{\perp}$ and $\Omega_{\mathbb{P}^{n+1}}(1)|_{\mathbb{Q}^{n}}$ are non-trivial extensions of 
 $T_{\mathbb{Q}^{n}}(-1)$ by $\oh_{\mathbb{Q}^{n}}(-1)$. Nevertheless, by the Borel-Bott-Weil theorem, $\dim \operatorname{Ext}^1(T_{\mathbb{Q}^{n}}(-1),\oh_{\mathbb{Q}^n}(-1))=h^1(T_{\mathbb{Q}^{n}}(-2))=1$. So $\mathcal{L}^{\perp}\cong\Omega_{\mathbb{P}^{n+1}}(1)|_{\mathbb{Q}^{n}}$.
 
 (3) As $T_{\mathbb{P}^{n+1}}(-1)$ is globally generated, so is its restriction to $\mathbb{Q}^{n}$. In view of (2), $(\mathcal{L}^{\perp})^{\vee}\cong T_{\mathbb{P}^{n+1}}(-1)|_{\mathbb{Q}^{n}}$ is consequently globally generated. Furthermore, by \cite[Theorem A and B]{BCM} and \cite[Theorem 1.7]{Liu}, $T_{\mathbb{P}^{n+1}}|_{\mathbb{Q}^{n}}$ is stable, which implies the stability of $(\mathcal{L}^{\perp})^{\vee}$, and hence $\mathcal{L}^{\perp}$ is stable as well.
 
 (4) By the exact sequence (\ref{ex1}), we have $c_1(\mathcal{L}^{\perp})|_{\ell}=-1$ for every line $\ell\subset \mathbb{Q}^{n}$. Combined with (3), this gives the result immediately.
 
 (5) From the long exact sequence in cohomology induced by (\ref{ex1}), we immediately obtain $H^i(\mathbb{Q}^{n},\mathcal{L}^{\perp})\cong H^i(\mathbb{Q}^{n},T_{\mathbb{Q}^{n}}(-1))$ for every $i$. By the Borel-Bott-Weil theorem, the cohomology groups $H^i(\mathbb{Q}^{n},T_{\mathbb{Q}^{n}}(-1))$ vanish for all $i$, whence the assertion (5) holds. Furthermore, by dualizing the exact sequence (\ref{ex1}) and passing to the long exact sequence, we deduce (6).
 \end{proof}
	
	\section{Uniform bundles on $\mathbb{Q}^3$ and $\mathbb{Q}^5$}
	
	\iffalse
	 Let $\mathcal{S}$ is the spinor bundle on $\mathbb{Q}^{2n-1}$, which is precisely the homogeneous $E_{\lambda_n}$. In 1983, Fritzsche (\cite{EP-MP}) proved that uniform $2$-bundles on $\mathbb{Q}^{3}$ either split, or are the bundles $\mathcal{S}(a)~(a\in\mathbb{Z})$. Subsequently, Kachi and Sato (\cite{}) showed that all uniform bundles of rank less than $4$ on $\mathbb{Q}^{5}$ split, whereas Fang-Li-Li (\cite{}) proved that those of rank $4$ either split or are the bundles $\mathcal{S}(a)$. 
	 \fi
	 As stated in the introduction, uniform bundles on $\mathbb{Q}^{2n-1}$ of rank at most $2n-2$ have been fully classified. However, the classification of rank $2n-1$ uniform bundles remains open. In this section, we classify uniform bundles of rank $3$ on $\mathbb{Q}^{3}$ and rank $5$ on $\mathbb{Q}^{5}$. 
	 
	\subsection{Uniform $3$-bundles on $\mathbb{Q}^3$}
	We begin with a useful lemma concerning uniform vector bundles on an arbitrary generalized Grassmannian.
	
	\begin{lemma}\label{lemma1}
	Let $X$ be a generalized Grassmannian  %with long root $\alpha_k$ 
	and $E$ a uniform bundle on $X$ of splitting type $(a_1,\dots,a_r)$. If
	 %$(\underbrace{a_1,\dots,a_1}_{l_1},\dots, \underbrace{a_k,\dots,a_k}_{l_k}), ~a_1>\cdots> a_k$. If
	 %$E|L\cong \mathcal{O}_{L}(a_1)^{r_1}\oplus\cdots\oplus \mathcal{O}_{L}(a_k)^{r_k}, ~a_1>\cdots> a_k$. If 
	\[
	a_s-a_{s+1}\geq 2~\text{ for some } ~s<r,
	\]
then $E$ is an extension of two uniform bundles of splitting types 
$(a_1,\dots,a_s)$ and $(a_{s+1},\dots,a_r)$, respectively.
 %$(\underbrace{a_1,\dots,a_1}_{l_1},\dots, \underbrace{a_s,\dots,a_s}_{l_s})$ and $(\underbrace{a_{s+1},\dots,a_{s+1}}_{l_{s+1}},\dots, \underbrace{a_k,\dots,a_k}_{l_k})$, respectively.
	\end{lemma}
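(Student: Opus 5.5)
We use the standard descent argument along the family of lines, via the diagram \eqref{diagram}. Pull $E$ back to the universal family, $p^*E$ on $\mathcal{U}$. For every $m\in\mathcal{M}$ the fibre $q^{-1}(m)$ is a line $L\subset X$, and $p^*E|_{q^{-1}(m)}\cong E|_L\cong\bigoplus_i\oh_L(a_i)$. Let $\oh_{\mathcal{U}}(1)$ denote a line bundle of degree one on the fibres of $q$; we may take $p^*\oh_X(1)$, since lines have degree one. Consider
\[
F:=q_*\bigl(p^*E\otimes p^*\oh_X(-a_s)\bigr).
\]
On each fibre we have $h^0(L,E|_L(-a_s))=\sum_{i\le s}(a_i-a_s+1)$, where the sum runs over the $i$ with $a_i\ge a_s$. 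This is constant in $m$ because $E$ is uniform. By Grauert's theorem, $F$ is therefore locally free on $\mathcal{M}$ (we assume $\mathcal{M}$ is reduced and smooth; in the long root case it is homogeneous). The evaluation map $q^*F\otimes p^*\oh_X(a_s)\to p^*E$ restricts on each fibre to the evaluation of global sections of $E|_L(-a_s)$. Its image is the subbundle $\bigoplus_{i\le s}\oh_L(a_i)$. This is the key point where $a_s-a_{s+1}\ge 2$ (in fact only $a_s>a_{s+1}$) is used: summands of degree below $a_s$ contribute no sections. Since the image has constant rank $s$ on every fibre, the image sheaf $\widetilde E_1\subset p^*E$ is a subbundle of rank $s$ with locally free quotient $\widetilde E_2$. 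On each line, the splitting types of $\widetilde E_1$ and $\widetilde E_2$ are $(a_1,\dots,a_s)$ and $(a_{s+1},\dots,a_r)$.

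Next we descend $\widetilde E_1$ to $X$. Let $T_{p}$ be the relative tangent bundle of $p$. The fibres of $p$ are the varieties of lines through a point. We show that the composition
\[
\widetilde E_1\to p^*E\to\widetilde E_2
\]
is compatible with the relative connection, following the standard argument for $\mathbb{P}^n$ in \cite{OSS} and for varieties covered by lines in \cite{MOS}. The subbundle $\widetilde E_1$ is $p$-horizontal, and hence of the form $p^*E_1$, exactly when the second fundamental form
\[
\widetilde E_1\otimes T_p\to\widetilde E_2
\]
vanishes, i.e. the induced element of $H^0\bigl(\mathcal{U},\mathcal{H}om(\widetilde E_1,\widetilde E_2)\otimes\Omega_p\bigr)$ is zero. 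We restrict this to a fibre $q^{-1}(m)=L$. The restriction $T_p|_L$ is the normal-type bundle of $L$ in the family, and it is a direct sum of $\oh_L(-1)$'s and $\oh_L$'s; for lines in generalized Grassmannians, $T_{\mathcal{U}/X}|_L$ has this form, since $N_{L/X}$ has summands $\oh_L(1)$ and $\oh_L$. Hence $\Omega_p|_L$ has summands of degree $0$ and $1$. Meanwhile $\mathcal{H}om(\widetilde E_1,\widetilde E_2)|_L$ has summands $\oh_L(a_j-a_i)$ with $a_j-a_i\le a_{s+1}-a_s\le -2$. After twisting by degrees at most $1$, every summand still has negative degree, so the section vanishes on every line. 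Thus the second fundamental form is zero, and $\widetilde E_1=p^*E_1$ with $E_1=p_*\widetilde E_1$ locally free on $X$, because the fibres of $p$ are connected rational homogeneous varieties and the bundle is trivial along them.

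Consequently $E_1\subset E$ is a subbundle with quotient $E_2$ satisfying $p^*E_2=\widetilde E_2$. Both $E_1$ and $E_2$ are uniform, of splitting types $(a_1,\dots,a_s)$ and $(a_{s+1},\dots,a_r)$, which gives the extension $0\to E_1\to E\to E_2\to 0$.

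The main obstacle is establishing the degree bound on $\Omega_p|_L$, i.e. $T_p|_L\cong\oh_L(-1)^{\oplus\alpha}\oplus\oh_L^{\oplus\beta}$, uniformly for all generalized Grassmannians, including the short root case where $\mathcal{M}$ has two orbits. I would derive it from the splitting $N_{L/X}\cong\oh(1)^{\alpha}\oplus\oh^{\beta}$, which holds for lines on homogeneous varieties. To apply it, I would use the relative Euler-type sequence identifying $T_p|_L$ with $N_{L/X}(-1)$ modulo a trivial part. In the short root case I would instead work over the open orbit together with its closure argument, or over the whole Hilbert scheme of lines, since the fibrewise computation is pointwise anyway.
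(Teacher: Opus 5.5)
Your proposal is correct and is essentially the paper's proof written out in full. The paper only checks that lines are unobstructed ($T_X$ is globally generated, so $H^1(L,f_L^*T_X)=0$ and $\mathrm{Mor}(\mathbb{P}^1,X)$ is smooth at $[f_L]$), and then appeals to the proof of \cite[Proposition 3.1]{PRT}, which is the same relative-filtration-and-descent argument you give.

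That same unobstructedness settles the step you flag, including the short-root case. It makes the Hilbert scheme $\mathcal{M}$ of lines smooth with $T_{[L]}\mathcal{M}=H^0(L,N_{L/X})$. Hence $T_{\mathcal{U}/X}|_L$ is the kernel of the evaluation map $H^0(L,N_{L/X})\otimes\mathcal{O}_L\to N_{L/X}$, a direct sum of copies of $\mathcal{O}_L(-1)$ with no $\mathcal{O}_L$ summands; it is not ``$N_{L/X}(-1)$ modulo a trivial part''. This gives $\Omega_p|_L\cong\mathcal{O}_L(1)^{\oplus\alpha}$, which is exactly the negativity that $a_s-a_{s+1}\ge 2$ needs.
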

	\begin{proof}
Let $L$ be a line in $X$. We denote the inclusion morphism by $f_L\colon L(\cong \mathbb{P}^1)\rightarrow X$. As $X$ is a homogeneous variety, the tangent bundle $T_X$ is globally generated. Then $f_L^*(T_X)$ is also globally generated and hence $H^1(L,f_L^*(T_X))$ vanishes. So Mor$(\mathbb{P}^1,X)$ is smooth at $[f_L]$.

If $a_s-a_{s+1}\geq 2$ for some $s<r$, then by the proof of \cite[Proposition 3.1]{PRT}, there exists a subbundle $W$ of $E$ such that $W$ is uniform of splitting type $(a_1,\dots,a_s)$ and $U:=E/W$ is a uniform bundle of splitting type 
$(a_{s+1},\dots,a_r)$. This completes the proof of the lemma.
	\end{proof}
	
	\begin{theorem}\label{uniform Q3}
		Let $E$ be a uniform bundle of rank $3$ on $\mathbb{Q}^{3}$. Then up to dual, $E$ either splits, or is isomorphic to
		\[
		\mathcal{S}(a)\oplus \mathcal{O}_{\mathbb{Q}^{3}}(b),~ T_{\mathbb{Q}^{3}}(c)~(a,b,c\in  \mathbb{Z})\]
		or a twist of the kernel of a bundle epimorphism $\mathcal{L}^{\perp}\longrightarrow \mathcal{O}_{\mathbb{Q}^{3}}$, where
		$\mathcal{L}^{\perp}$ is the $\mathcal{Q}$-orthogonal complement of $\mathcal{O}_{\mathbb{Q}^{3}}(-1)$.
	\end{theorem}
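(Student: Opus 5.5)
After twisting and dualizing, I will normalize the splitting type of $E$ to $(a_1,a_2,a_3)$ with $a_1\ge a_2\ge a_3$ and $a_3=0$. The argument then splits according to the gaps $a_1-a_2$ and $a_2-a_3$. Three facts will be used repeatedly.
\begin{itemize}
\item[(i)] On $\mathbb{Q}^3$ a uniform line bundle is $\mathcal{O}(a)$.
\item[(ii)] By Fritzsche's theorem, a uniform $2$-bundle is either split or $\mathcal{S}(a)$; the spinor bundle $\mathcal{S}$ has splitting type $(0,-1)$ up to twist.
\item[(iii)] Every uniform bundle on $\mathbb{Q}^3$ with constant splitting type $(a,\dots,a)$ is $\mathcal{O}(a)^{\oplus r}$. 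This holds because the family of lines through a point is connected and chain-connects $\mathbb{Q}^3$; it is the standard triviality criterion.
\end{itemize}

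\textbf{Case A: some gap is at least $2$.} Lemma \ref{lemma1} writes $E$ as an extension $0\to W\to E\to U\to 0$ of uniform bundles of ranks $1$ and $2$ (or $2$ and $1$). By (i) and (ii), each piece is a line bundle, a sum of two line bundles, or a twisted spinor bundle. The extension group is then an $H^1$ of a line bundle or a twisted spinor bundle, and Borel--Bott--Weil (Theorem \ref{borel bott weil}) together with the known cohomology of $\mathcal{S}$ computes it. The gap condition should force this $H^1$ to vanish in every configuration. For example, $H^1(\mathcal{S}(t))=0$ for all $t$, and $H^1(\mathcal{O}(t))=0$. So $E$ is either split or $\mathcal{S}(a)\oplus\mathcal{O}(b)$.

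\textbf{Case B: all gaps are at most $1$.} The possible splitting types are $(0,0,0)$, $(1,0,0)$, $(1,1,0)$ and $(2,1,0)$. Type $(0,0,0)$ is trivial by (iii). Type $(1,1,0)$ is the dual of $(1,0,0)$ up to twist. Type $(2,1,0)$ is the type of $T_{\mathbb{Q}^3}$, since $T_{\mathbb{Q}^3}|_\ell=\mathcal{O}(2)\oplus\mathcal{O}(1)\oplus\mathcal{O}(1)$ has gaps $(1,0)$, so I will reconsider what occurs here. The standard tool is the universal family in diagram (\ref{diagram}): $\mathcal{M}$ is the space of lines, which is $\mathbb{P}^3$, and $\mathcal{U}=B_2/B$. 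On $\mathcal{U}$ the relative Harder--Narasimhan filtration of $p^*E$ descends along $q$ to bundles on $\mathcal{M}$. One then studies whether the subbundle descends to $X$, i.e.\ whether it is trivial on the fibres of $p$, which are $\mathbb{P}^1$'s, as in the projective-space arguments of Elencwajg and Ellia.

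\textbf{Type $(0,0,-1)$, i.e.\ $(1,1,0)$ after twist.} Consider $p^*E$ on $\mathcal{U}$. The $q$-relative HN piece of type $(0,0)$ gives a rank $2$ bundle $F$ on $\mathbb{P}^3=\mathcal{M}$ with $q^*F\subset p^*E$. Restrict $F$ to the fibre $p^{-1}(x)$, whose image in $\mathcal{M}$ is a line of $\mathbb{P}^3$. If $F$ is trivial on those lines, then $F$ descends and $E$ is an extension of uniform bundles of types $(0,0)$ and $(-1)$, hence split. Otherwise the splitting type on these lines is forced to be $(0,-1)$ or $(1,-1)$ by Chern class considerations. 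I expect this to lead to two possibilities. In the first, $E^\vee$ is globally generated with $h^0=5$, which realizes $E$ as a kernel of $\mathcal{L}^\perp\to\mathcal{O}$ via the evaluation map and Proposition \ref{prop1}(6). In the second, $E$ contains a spinor twist, giving $\mathcal{S}(a)\oplus\mathcal{O}(b)$ again.

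\textbf{Type $(2,1,0)$.} An analogous analysis with the filtration $(2)\subset(2,1)\subset(2,1,0)$ on $\mathcal{U}$ should yield $T_{\mathbb{Q}^3}$; here $T_{\mathcal{U}/\mathcal{M}}$ descends naturally.

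The main obstacle is Case B, and specifically the step showing that a non-descending rank $2$ subsheaf forces $E$ to be a kernel of $\mathcal{L}^\perp\to\mathcal{O}$. I would first compute the Chern classes of $E$ from the fibre splitting and then try to produce the five sections of $E^\vee$, using stability and the Chern class identities of Proposition \ref{prop1}(1).
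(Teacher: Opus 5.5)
Your outer framework is the paper's: Lemma \ref{lemma1} with Fritzsche's theorem and the ACM property for gaps $\ge 2$, then the relative Harder--Narasimhan filtration on $\mathcal{U}\to\mathcal{M}\cong\mathbb{P}^3$. However, the step that produces the orthogonal kernel bundles is missing, and what you put in its place is partly wrong.

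In type $(0,0,-1)$, $q^*G|_{p^{-1}(x)}$ is a subbundle of $p^*E|_{p^{-1}(x)}\cong\mathcal{O}^{\oplus 3}$, so a type $(1,-1)$ cannot occur. The Chern-class relations in $H^\bullet(\mathcal{U},\mathbb{Q})$ give $c_1(G)\in\{0,-1,-2\}$. The restrictions to $p$-fibres are then trivial (so $E$ splits), or $\mathcal{O}\oplus\mathcal{O}(-1)$ (giving $\mathcal{S}(-1)\oplus\mathcal{O}$), or, for $c_1(G)=-2$, $\mathcal{O}(-1)^{\oplus 2}$ or $\mathcal{O}\oplus\mathcal{O}(-2)$.

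The decisive point, absent from your proposal, is that in this last case both restrictions must occur.
If $\mathcal{O}(-1)^{\oplus 2}$ occurs everywhere, then $p_*q^*G=R^1p_*q^*G=0$ and $E\cong p_*q^*\mathcal{O}_{\mathcal{M}}(2)\otimes\mathcal{O}(-1)=T_{\mathbb{Q}^3}(-1)$, which has the wrong splitting type.
If $\mathcal{O}\oplus\mathcal{O}(-2)$ occurs everywhere, the splitting type and stability of $T_{\mathbb{Q}^3}(-1)$ force $p_*q^*G\cong\mathcal{O}$ and $R^1p_*q^*G\cong\mathcal{O}(1)$. Then $c_2(E)=2X_1^2$, contradicting $c_2(E)=X_1^2$.

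Hence $G(1)$ is trivial on a general line of $\mathbb{P}^3$, semistable with $c_1=0$, $c_2=1$, and therefore a null-correlation bundle. Pushing forward $0\to G\to T_{\mathbb{P}^3}(-2)\to\mathcal{O}\to 0$ identifies $R^1p_*q^*G$ with $\mathrm{coker}(\mathcal{O}(-1)\to\mathcal{O})$. Then $0\to E\to T_{\mathbb{Q}^3}(-1)\to R^1p_*q^*G\to 0$, together with $\dim\mathrm{Ext}^1(T_{\mathbb{Q}^3}(-1),\mathcal{O}(-1))=1$ and the snake lemma, gives $E\cong\ker(\mathcal{L}^{\perp}\to\mathcal{O})$. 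Writing \emph{I expect this to lead to two possibilities} does not replace any of this.

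Your fallback of producing sections of $E^\vee$ is mis-calibrated. If $E=\ker(\mathcal{L}^{\perp}\to\mathcal{O})$, then $h^0(E^\vee)=4$ by Proposition \ref{prop2}(5); five is $h^0((\mathcal{L}^{\perp})^\vee)$. The correct target is $0\to\mathcal{O}(-1)\to\mathcal{O}^{\oplus 4}\to E^\vee\to 0$, i.e.\ $E$ is the kernel of four linear forms without common zero on $\mathbb{Q}^3$, and such an $E$ is indeed an orthogonal kernel bundle. But showing that $E^\vee$ is generated by four sections is the entire difficulty, and you offer no mechanism for it.

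Likewise, type $(1,0,-1)$ does not only give $T_{\mathbb{Q}^3}(-1)$. With all gaps $\le 1$ it also yields split bundles, $\mathcal{S}\oplus\mathcal{O}(-1)$ and $\mathcal{S}(-1)\oplus\mathcal{O}(1)$, so your Case A does not account for all spinor summands. You need the Chern-class restriction $(a_1,a_2)\in\{(0,0),(-\frac12,0),(0,\frac12),(-1,1)\}$. In the last case you need the dichotomy for $0\to E_1\to E_2\to\mathcal{O}_{\mathcal{U}}\to 0$, where $h^1(\mathcal{U},E_1)=1$:
a split extension puts $\mathcal{O}_{\mathcal{U}}$ inside $p^*E$ and makes $E$ split;
a non-split one gives $E_2\cong q^*\mathcal{O}_{\mathcal{M}}(-1)\otimes p^*\mathcal{S}$, hence $E\cong p_*q^*\mathcal{O}_{\mathcal{M}}(2)\otimes\mathcal{O}(-1)=T_{\mathbb{Q}^3}(-1)$.
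Saying \emph{$T_{\mathcal{U}/\mathcal{M}}$ descends naturally} is not an argument for this.

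Finally, $T_{\mathbb{Q}^3}|_\ell\cong\mathcal{O}(2)\oplus\mathcal{O}(1)\oplus\mathcal{O}$, not $\mathcal{O}(2)\oplus\mathcal{O}(1)^{\oplus 2}$.
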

	\begin{proof}
Let $E$ be an unsplit uniform bundle of splitting type $(a_1,a_2,a_3)$ on $\mathbb{Q}^{3}$. By Lemma \ref{lemma1}, if there is an integer $s<3$ such that $a_s-a_{s+1}\geq 2$, then $E$ is an extension of uniform bundles of rank $1$ and $2$. %Since uniform $2$-bundles on $\mathbb{Q}^{3}$ either split or are the bundles $\mathcal{S}(a)~(a\in\mathbb{Z})$
According to the classification of uniform $2$-bundles on $\mathbb{Q}^{3}$, $E$ either splits or is isomorphic to $\mathcal{S}(a)\oplus \mathcal{O}_{\mathbb{Q}^{3}}(b)~(a,b\in \mathbb{Z})$, as both line bundles and the spinor bundle $\mathcal{S}$ are ACM bundles (see \cite[Corollary 3.12]{DFR} or \cite[Theorem 3.5]{ott2}). Hence up to dual and twisting $E$ by a line bundle, we may assume the splitting type of $E$ is one of the following types
$(0,0,0), ~(0,0,-1), ~(1,0,-1)$. We can rule out the possibility $(0,0,0)$ by \cite[Proposition 1.2]{AW}.
%$a_i-a_{i+1}\le 1$ for any $1\le i\le 2$.

 We adopt the notations of Section \ref{sec1}. In the case $X=\mathbb{Q}^{3}(=B_2/P_1)$, $\mathcal{M}=B_2/P_2$ is the family of all lines on $\mathbb{Q}^{3}$ and $\mathcal{U}$ is $B_2/P_{1,2}$. Moreover, every $q$-fiber and every $p$-fiber is isomorphic to $\mathbb{P}^1$. %In particular, $\mathbb{Q}^{3}$ is the family of special lines on $\mathbb{P}^{3}$. 
 By \cite[Lemma 2.1]{FLL}, we list the cohomology rings as follows
\begin{equation*}
	\begin{aligned}
		&H^\bullet(\mathbb{Q}^{3},\mathbb{Q})\cong\mathbb{Q}[X_1]/(X_1^4),~H^{\bullet}(\mathcal{U},\mathbb{Q})\cong\mathbb{Q}[X_1,X_2]/I,~ I=\bigl(\Sigma_d(X_1^2,X_2^2)_{1\leq d\leq 2}\bigr),\\
		&H^{\bullet}(\mathcal{M},\mathbb{Q})\cong\mathbb{Q}[X_1+X_2,X_1X_2]/I\cong \mathbb{Q}[X_1+X_2]/\bigl((X_1+X_2)^4\bigr).
	\end{aligned}
\end{equation*}
Note that $\mathcal{M}\cong \mathbb{P}^3$, we have $c_1(q^*\mathcal{O}_{\mathcal{M}}(1))=\frac{1}{2}(X_1+X_2)$. And we denote the Chern classes $c_t(p^*E)(1\leq t \leq 3)$ of $p^*E$ by $\mu_tX_1^t\pmod{I}$ for some rational numbers $\mu_t$.\\

\textbf{Case I}: the splitting type of $E$ is $(0,0,-1)$.\\

The relative Harder-Narasimhan (H-N) filtration induces an exact sequence
\begin{align}\label{exact1}
	0\rightarrow q^*G \rightarrow p^*E \rightarrow q^*L\otimes p^*\oh_X(-1) \rightarrow 0,
\end{align} where $G$ is of rank $2$  and $L$ is a line bundle. In view of $c_1(p^*E)=c_1(q^*G)+c_1(q^*L\otimes p^*\oh_X(-1))$, we set $c_1(q^*G)=a(X_1+X_2)$, $c_1(q^*L\otimes p^*\oh_X(-1))=-a(X_1+X_2)-X_1$ with $2a\in \mathbb{Z}$. We further write $c_2(q^*G)=b(X_1+X_2)^2$.
%Note that $X_1^2+X_2^2=0$ on $\mathcal{U}$. 
%Comparing the coefficient of $X_1X_2$ on both sides of the equation 
By Whitney's formula, we have 
\begin{align*}
\mu_2X_1^2&=c_2(p^*E)=c_2(q^*G)+c_1(q^*G)c_1(q^*L\otimes p^*\oh_X(-1))\\
&=-aX_1^2+(2b-2a^2-a)X_1X_2,
\end{align*}
where the final equality uses $X_1^2+X_2^2=0$ in $H^{\bullet}(\mathcal{U},\mathbb{Q})$. Matching the coefficient of $X_1X_2$ then gives $2b-2a^2-a=0$. Similarly,
%$\mu_2X_1^2=c_2(p^*E)=c_2(q^*G)+c_1(q^*G)c_1(q^*L\otimes p^*\oh_X(-1))=-aX_1^2+(2b-2a^2-a)X_1X_2$ (noting that here we using $X_1^2+X_2^2=0$ on $H^{\bullet}(\mathcal{U},\mathbb{Q})$), 
$\mu_3X_1^3=c_3(p^*E)=c_2(q^*G)c_1(q^*L\otimes p^*\oh_X(-1))=2abX_1^3-(2ab+2b)X_1^2X_2$ yields $2ab+2b=0$ after equating 
the coefficient of $X_1^2X_2$. Solving these equations, we obtain
$a=0$, $-\frac{1}{2}$ or $-1$. Accordingly,
$L=\oh_{\mathcal{M}}$, $\oh_{\mathcal{M}}(1)$ or $\oh_{\mathcal{M}}(2)$. \\

If $a=0$, then $c_1(q^*G|_{p^{-1}(x)})$ and $c_1(q^*L\otimes p^*\oh_X(-1)|_{p^{-1}(x)})$ vanish for $x\in X$. By \cite[Lemma 3.1]{FLL}, $q^*G|_{p^{-1}(x)}$ and $q^*L\otimes p^*\oh_X(-1)|_{p^{-1}(x)}$ are trivial. By the base change theorem, $p_{*}q^*G$ is a rank $2$ bundle and $p_{*}(q^*L\otimes p^*\oh_X(-1))$ is a line bundle. They are uniform bundles by construction, of splitting types $(0,0)$ and $(-1)$ respectively, so they split. Therefore, $E$ is an extension of two split bundles, which implies that $E$ splits.\\

If $a=-\frac{1}{2}$, then $L=\oh_{\mathcal{M}}(1)$. Hence $q^*L\otimes p^*\oh_X(-1)|_{p^{-1}(x)}\cong \oh_{\mathbb{P}^1}(1)$ and $q^*G|_{p^{-1}(x)}\cong\oh_{\mathbb{P}^1}\oplus\oh_{\mathbb{P}^1}(-1)$. Therefore $R^1p_*q^*G$ vanishes by the base change theorem. By projecting (\ref{exact1}) to $X$, we obtain an exact sequence $0\rightarrow p_*q^*G \rightarrow E \rightarrow p_*q^*\oh_{\mathcal{M}}(1)\otimes \oh_X(-1)  \rightarrow 0$. According to \cite[Proposition 10.13]{ott}, the sheaf $p_*q^*\oh_{\mathcal{M}}(1)$ is the homogeneous bundle $E_{\lambda_2}$, i.e., the spinor bundle 
$\mathcal{S}$. As $\mathcal{S}$ is ACM, we have $E\cong\mathcal{S}(-1)\oplus \mathcal{O}$, with Chern classes $c_1(E)=-X_1$, $c_2(E)=\frac{1}{2}X_1^2$, $c_3(E)=0$.\\
 
If $a=-1$, then $b=\frac{1}{2}$. In this case, $G$ is a $2$-bundle on $\mathbb{P}^3$ with $c_1(G)=-2$, $c_2(G)=2$ and
 \begin{align*}%\label{equ1}
 c_1(E)=-X_1, ~c_2(E)=X_1^2 ~\text{and}~c_3(E)=-X_1^3.
 \end{align*}
In addition, $L=\oh_{\mathcal{M}}(2)$. Hence $q^*L\otimes p^*\oh_X(-1)|_{p^{-1}(x)}\cong \oh_{\mathbb{P}^1}(2)$ and $q^*G|_{p^{-1}(x)}\cong\oh_{\mathbb{P}^1}^{\oplus 2}(-1)$ or $\oh_{\mathbb{P}^1}\oplus \oh_{\mathbb{P}^1}(-2)$.\\

\begin{claim}\label{claim:1}
There exists a point $x$ for which $q^*G|_{p^{-1}(x)}\cong \oh_{\mathbb{P}^1}^{\oplus 2}(-1)$, and a point $y$ for which $\oh_{\mathbb{P}^1}\oplus \oh_{\mathbb{P}^1}(-2)$.
\end{claim}

Proof. Assume for any $x\in X$, $q^*G|_{p^{-1}(x)}\cong \oh_{\mathbb{P}^1}^{\oplus 2}(-1)$, we have $p_*q^*G$ and $R^1p_*q^*G$ vanish by the base change theorem. By projecting (\ref{exact1}) to $X$, we get $E$ is isomorphic to  $p_*q^*\oh_{\mathcal{M}}(2)\otimes \oh_X(-1)$, which is the homogeneous bundle $E_{2\lambda_2}(-1)$, i.e., $T_{\mathbb{Q}^{3}}(-1)$. However, the splitting type of $T_{\mathbb{Q}^{3}}(-1)$ is $(1,0,-1)$, which differs from that of $E$. This immediately yields a contradiction.

Assume for any $x\in X$, $q^*G|_{p^{-1}(x)}\cong \oh_{\mathbb{P}^1}\oplus \oh_{\mathbb{P}^1}(-2)$. Then $p_*q^*G$ and $R^1p_*q^*G$ are both line bundles. By projecting (\ref{exact1}) to $X$, we get an exact sequence
$0\rightarrow p_*q^*G \rightarrow E \rightarrow T_{\mathbb{Q}^{3}}(-1) \rightarrow R^1p_*q^*G\rightarrow0$. Since $c_1(E)=-X_1$ and $c_1(T_{\mathbb Q^3}(-1))=0$, comparing the first Chern classes gives
 $R^1p_*q^*G=\oh_X(a)$, $p_*q^*G=\oh_X(a-1)$ for some integer $a$. Since the splitting type of $E$ is $(0,0,-1)$, we have $a-1\le 0$. On the other hand, the stability of $T_{\mathbb{Q}^{3}}(-1)$ and the fact that its slope is zero yield $a\ge 1$. This forces $a=1$. Since $c_1(T_{\mathbb Q^3}(-1))=0$ and $c_2(T_{\mathbb Q^3}(-1))=X_1^2$, the above exact sequence would give $c_2(E)=2X_1^2$, which contradicts the earlier computation of $c_2(E)$.\\
 
 %Note that the Chern classes of $T_{\mathbb{Q}^{3}}(-1)$ are$c_1(T_{\mathbb{Q}^{3}}(-1))=c_3(T_{\mathbb{Q}^{3}}(-1))=0$ and $c_2(T_{\mathbb{Q}^{3}}(-1))=X_1^2$. From the above exact sequence, one would have 

By the semicontinuity theorem, the set $V_E=\{y\in X\mid q^*G|_{p^{-1}(y)}\cong\oh_{\mathbb{P}^1}\oplus \oh_{\mathbb{P}^1}(-2)\}$ is closed in $X$. Since $p_*q^*G$ is torsion free and zero outside of $V_E$, $p_*q^*G$ vanishes, we have the following short exact sequence:
\begin{align}\label{exact2}
0\rightarrow E \rightarrow T_{\mathbb{Q}^{3}}(-1) \xrightarrow{\varphi} R^1p_*q^*G\rightarrow 0.
\end{align}
On the other hand, by Claim \ref{claim:1} and the semicontinuity theorem, $G$ splits as $\oh_{\mathbb{P}^1}^{\oplus 2}(-1)$ on a generic line of $\mathbb{P}^{3}$. Let $N:=G(1)$. Then $N$ is trivial when restricted to a general line. By \cite[Lemma 2.2.1]{OSS}, $N$ is semistable. Since $c_1(G)=-2$ and $c_2(G)=2$, we have $c_1(N)=0$, $c_2(N)=1$, which implies that $N$ must be stable. (Otherwise, $N$ has a non-zero section $s$, which induces an exact sequence $0\rightarrow \oh_{\mathbb{P}^{3}} \rightarrow N \rightarrow I_Y\rightarrow0$, where the zero locus $Y$ of $s$ is a locally complete curve. Then we have $\omega_Y \cong (\omega_{\mathbb{P}^3} \otimes \det N)|_Y \cong \mathcal{O}_Y(-4)$. However, since $c_2(N)=1$, it follows that $Y$ is a $\mathbb{P}^1$. That is a contradiction to $\omega_Y=\oh_Y(-4)$.) Now we know $N$ is a stable $2$-bundle on $\mathbb{P}^3$ with $c_1(N)=0$, $c_2(N)=1$, hence $N$ is a null correlation bundle and is given by $0\rightarrow N \rightarrow T_{\mathbb{P}^3}(-1) \rightarrow \oh_{\mathbb{P}^3}(1)\rightarrow0$ (see \cite[Lemma 4.3.2]{OSS}). Therefore, $G$ fits into the exact sequence $0\rightarrow G \rightarrow T_{\mathbb{P}^3}(-2) \rightarrow \oh_{\mathbb{P}^3}\rightarrow0$ on $\mathbb{P}^3$. Applying $p_{*}q^{*}$ to this exact sequence yields a new exact sequence (here we use $p_{*}q^{*}G=0$ and $R^1p_{*}q^{*}(T_{\mathbb{P}^3}(-2))=0$)
\[
0\rightarrow p_{*}q^{*}(T_{\mathbb{P}^3}(-2)) \rightarrow \oh_{\mathbb{Q}^3}\rightarrow R^1p_{*}q^{*}G\rightarrow 0.
\]
 From $(\ref{exact2})$, we see $c_1(R^1p_{*}q^{*}G)=c_1(T_{\mathbb{P}^3}(-1))-c_1(E)=X_1$. It follows that $c_1(p_{*}q^{*}(T_{\mathbb{P}^3}(-2)))=-X_1$. %Then by the base-change theorem,
 %By comparing the first Chern classes, one easily deduces that the line bundle 
 So $p_{*}q^{*}(T_{\mathbb{P}^3}(-2))\cong\oh_{\mathbb{Q}^3}(-1)$. This gives a non-trivial extension $\alpha\in \mathrm{Ext}^1(R^1p_{*}q^{*}G,\oh_{\mathbb{Q}^3}(-1))$.\\
 
 Applying the functor $\operatorname{Hom}(-,\oh_{\mathbb{Q}^3}(-1))$ to the exact sequence $(\ref{exact2})$, we obtain a morphism $g\colon\mathrm{Ext}^1(R^1p_{*}q^{*}G,\oh_{\mathbb{Q}^3}(-1))\rightarrow \mathrm{Ext}^1(T_{\mathbb{Q}^{3}}(-1),\oh_{\mathbb{Q}^3}(-1))$. By Proposition \ref{prop1}, the unique non-trivial extension in $\mathrm{Ext}^1(T_{\mathbb{Q}^{3}}(-1),\oh_{\mathbb{Q}^3}(-1))$ gives rise to
 $\mathcal{L}^{\perp}$ up to isomorphism.
 %By the isomorphism $T_{\mathbb{Q}^{3}}(-1)\cong \Omega_{\mathbb{Q}^{3}}(1)$and the Borel-Bott-Weil Theorem, one can show that $\dim Ext^1(T_{\mathbb{Q}^{3}}(-1),\oh_{\mathbb{Q}^3}(-1))=h^1(T_{\mathbb{Q}^{3}}(-2))=1$. Hence the only non-trivial extension is isomorphic to $\mathcal{L}^{\perp}$ (the $\mathcal{Q}$-orthogonal complement of $\mathcal{O}_{\mathbb{Q}^{3}}(-1)$), as $T_{\mathbb{Q}^{3}}(-1)=\mathcal{L}^{\perp}/\oh_{\mathbb{Q}^3}(-1)$. 
 If $g(\alpha)\ne 0$, then we have the following commutative diagram
 \begin{align*}
 \xymatrix{
 	0\ar[r]& \mathcal{O}_{\mathbb{Q}^{3}}(-1)\ar[r]\ar[d]^{\cong}&\mathcal{L}^{\perp}\ar[d]^\phi\ar[r]& T_{\mathbb{Q}^{3}}(-1)\ar[d]^\varphi\ar[r]&0\\
 	0\ar[r]& \mathcal{O}_{\mathbb{Q}^{3}}(-1)\ar[r]&\mathcal{O}_{\mathbb{Q}^{3}}\ar[r] &R^1p_{*}q^{*}G\ar[r]&0,
 }
 \end{align*}
where $\varphi$ denotes the surjection in $(\ref{exact2})$. By the Snake Lemma, $E$ is isomorphic to $\ker \phi$ and hence $E$ fits into the exact sequence
 \begin{align*}%\label{perp}
 0\rightarrow E \rightarrow \mathcal{L}^{\perp}\rightarrow \mathcal{O}_{\mathbb{Q}^{3}}\rightarrow 0.
 \end{align*}

By the same argument, we may rule out the case $g(\alpha)= 0$. Otherwise, the bundle $E$ would satisfy an exact sequence $0\rightarrow E \rightarrow \mathcal{O}_{\mathbb{Q}^{3}}(-1)\oplus T_{\mathbb{Q}^{3}}(-1)\rightarrow \mathcal{O}_{\mathbb{Q}^{3}}\rightarrow 0$, which is incompatible with the splitting type of $E$ on any line.\\

\textbf{Case II}: the splitting type of $E$ is $(1,0,-1)$.\\

The relative H-N filtration of $p^*E$ induces exact sequences
\begin{align}
	&0\rightarrow E_1(=q^*L_1\otimes p^*\oh_{X}(1))\rightarrow  E_2\rightarrow E_2/E_1(=q^*{L_2})\rightarrow 0,\label{exact3-1}\\
	&0\rightarrow E_2\rightarrow  p^*E\rightarrow E_3(=q^*L_3\otimes p^*\oh_{X}(-1))\rightarrow 0, \label{exact3-2}
\end{align} where every $L_i~(i=1,2,3)$ is a line bundle. 
Assume $c_1(E_1)=a_1(X_1+X_2)+X_1$ and $c_1(E_3)=a_2(X_1+X_2)-X_1$. Then $c_1(E_2/E_1)=-(a_1+a_2)(X_1+X_2)$.
By an analysis analogous to that of the Chern polynomials in Case I, we deduce that $(a_1,a_2)=(0,0)$, $(-\frac{1}{2},0)$, $(0,\frac{1}{2})$
or $(-1,1)$.\\

If $(a_1,a_2)=(0,0)$ or $(-\frac{1}{2},0)$, then $L_3=\oh_{\mathcal{M}}$ and for every $x\in X$, $E_2|_{p^{-1}(x)}$ is trivial. By the base change theorem, $p_*{E_2}$ is a rank $2$ bundle and it is uniform of splitting type $(1,0)$. Hence $p_*{E_2}$ is either $\oh_X(1)\oplus \oh_X$ or the spinor bundle $\mathcal{S}$. By projecting the exact sequence (\ref{exact3-2}) to $X$, we obtain an exact sequence $0\rightarrow p_*{E_2}\rightarrow  E\rightarrow \oh_{X}(-1)\rightarrow 0$. Since line bundles and $\mathcal{S}$ are ACM bundles, we have $E$ either splits or is isomorphic to $\mathcal{S}\oplus \oh_X(-1)$.\\

If $(a_1,a_2)=(0,\frac{1}{2})$, then $L_3=\oh_{\mathcal{M}}(1)$ and for every $x\in X$, $E_2|_{p^{-1}(x)}\cong \oh(-1)\oplus \oh$. Therefore $R^1p_*E_2$ vanishes. By projecting (\ref{exact3-2}) to $X$, we obtain an exact sequence $0\rightarrow p_*{E_2} \rightarrow E \rightarrow p_*q^*\oh_{\mathcal{M}}(1)\otimes \oh_X(-1)(=\mathcal{S}(-1))\rightarrow 0$. By comparing the first Chern classes, we have $p_*{E_2}\cong \oh_X(1)$. Hence $E$ is isomorphic to $\mathcal{S}(-1)\oplus \oh_X(1)$.\\

If $(a_1,a_2)=(-1,1)$, then $E_1=q^*\oh_{\mathcal{M}}(-2)\otimes p^*\oh_X(1)$, $E_3=q^*\oh_{\mathcal{M}}(2)\otimes p^*\oh_X(-1)$ and $E_2/E_1=\oh_{\mathcal{U}}$. Since for any $y\in \mathcal{M}$, $E_1|_{q^{-1}(y)}\cong \oh(1)$, we have $R^iq_{*}E_1=0$ for any $i>0$. By the degenerate spectral sequence and the Borel-Bott-Weil theorem, $h^1(\mathcal{U},E_1)=h^1(\mathcal{M},q_{*}E_1)=1$. Thus, the exact sequence (\ref{exact3-1}) either splits or only has a non-trivial extension.
\begin{itemize}
	\item Suppose the exact sequence (\ref{exact3-1}) splits. Then $E_2\cong \oh_{\mathcal{U}}\oplus E_1$. Therefore, $p^*E$ admits $\oh_{\mathcal{U}}$ as a subbundle. %Let $F$ be the quotient bundle $p^*E/\oh_{\mathcal{U}}$. 
	Pushing forward the exact sequence $0\rightarrow \oh_{\mathcal{U}} \rightarrow p^*E\rightarrow F\rightarrow 0$, where $F:=p^*E/\oh_{\mathcal{U}}$,
	to $X$ yields the exact sequence $0\rightarrow \oh_X \rightarrow E\rightarrow p_*F\rightarrow 0$. 
	It is not hard to see that $p_*F$ is a uniform $2$-bundle of splitting type $(1,-1)$. It follows that $p_*F\cong \oh_X(1)\oplus\oh_X(-1)$ and hence $E$ splits, contradicting our initial assumption.
	\item Suppose the exact sequence (\ref{exact3-1}) does not split. Since there is a non-trivial extension 
	\[
	0\rightarrow q^*\oh_{\mathcal{M}}(-2)\otimes p^*\oh_{X}(1)\rightarrow q^*\oh_{\mathcal{M}}(-1)\otimes p^*\mathcal{S} \rightarrow \oh_{\mathcal{U}}\rightarrow 0,
	\]
	arising from the relative H-N filtration of $p^*\mathcal{S}$, we have $E_2\cong q^*\oh_{\mathcal{M}}(-1)\otimes p^*\mathcal{S}$, which implies that $p_*{E_2}$ and $R^1p_*{E_2}$ vanish. By projecting (\ref{exact3-2}) to $X$, we obtain $E\cong p_*q^*\oh_{\mathcal{M}}(2)\otimes \oh_{X}(-1)$. By \cite[Proposition 10.13]{ott}, this bundle is the homogeneous bundle $E_{2\lambda_2}(-1)$ and coincides with the tangent bundle $T_X(-1)$.\\
\end{itemize}
 %In conclusion, the proof of the theorem is complete.
\end{proof}

From now on, we call the kernel of any bundle epimorphism $\mathcal{L}^{\perp}\longrightarrow \mathcal{O}_{\mathbb{Q}^{n}}$ an \emph{orthogonal kernel bundle}, all such bundles will be denoted uniformly by $\mathcal{L}_{\mathrm{ker}}$. From the classification of uniform $3$-bundles on $\mathbb{Q}^3$, we observe several new features not present in the earlier literature.
%we observe several new phenomena that do not appear in the existing classification of low-rank uniform bundles over general Grassmannian manifolds $X$.

\begin{remark}
\begin{enumerate}
	\item In the literature, all known classified low-rank uniform vector bundles on a generalized Grassmannian are determined by their Chern classes (see \cite[Main Theorem]{EHS}, \cite[Theorem 1.1]{FLL}, \cite[Theorem 1]{guy} and \cite[Theorem 3.1]{MOS}). Nevertheless, it follows from the proof of Theorem \ref{uniform Q3} that every orthogonal kernel bundle $\mathcal{L}_{\mathrm{ker}}$ on $\mathbb{Q}^3$ carries the same Chern invariants: 
	\[
	 c_1(\mathcal{L}_{\mathrm{ker}})=-X_1, ~c_2(\mathcal{L}_{\mathrm{ker}})=X_1^2,~c_3(\mathcal{L}_{\mathrm{ker}})=-X_1^3.
	\]
This provides the first example of uniform bundles over a generalized Grassmannian which are not determined by their Chern classes.
	\item Let $\pi\colon Y\to X$ be a smooth projective morphism between smooth projective varieties. There exists a family of vector bundles on $Y$ whose restrictions to distinct fibers are non-isomorphic vector bundles. For instance, as observed from Claim \ref{claim:1}, for a null correlation bundle $N$ on $\mathbb{P}^3$, the restriction of $q^*N$ to a general $p$-fiber is isomorphic to $\oh^{\oplus 2}_{\mathbb{P}^1}$, while its restriction to a special
	$p$-fiber is isomorphic to $\oh_{\mathbb{P}^1}(1)\oplus\oh_{\mathbb{P}^1}(-1)$.
\end{enumerate} 
\end{remark}
 From Proposition \ref{prop1} and the definition of $\mathcal{L}_{\mathrm{ker}}$, we obtain the properties listed below. The arguments are analogous to those for Proposition \ref{prop1}.

\begin{proposition}\label{prop2}
	\begin{enumerate}
		\item $\mathcal{L}_{\mathrm{ker}}$ is stable and $\mathcal{L}_{\mathrm{ker}}^{\vee} $ is generated by global sections;
		\item $\mathcal{L}_{\mathrm{ker}}$ is uniform of splitting type $(0,\ldots,0,-1)$;
		\item The Chern classes of $\mathcal{L}_{\mathrm{ker}}$ satisfy $c_i(\mathcal{L}_{\mathrm{ker}})=(-1)^iX_1^i$ for any $1\le i\le n$;
		\item $H^1(\mathbb{Q}^{n},\mathcal{L}_{\mathrm{ker}})=1$ and $H^i(\mathbb{Q}^{n},\mathcal{L}_{\mathrm{ker}})=0$ for any $i\ne 1$;
		\item $H^0(\mathbb{Q}^{n},\mathcal{L}_{\mathrm{ker}}^{\vee})=n+1$ and $H^i(\mathbb{Q}^{n},\mathcal{L}_{\mathrm{ker}}^{\vee})=0$ for any $i>0$.
	\end{enumerate}
\end{proposition}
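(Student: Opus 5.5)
Everything will be derived from the defining sequence
\[
0\rightarrow \mathcal{L}_{\mathrm{ker}}\rightarrow \mathcal{L}^{\perp}\rightarrow \mathcal{O}_{\mathbb{Q}^{n}}\rightarrow 0
\]
and its dual $0\rightarrow \mathcal{O}_{\mathbb{Q}^{n}}\rightarrow (\mathcal{L}^{\perp})^{\vee}\rightarrow \mathcal{L}_{\mathrm{ker}}^{\vee}\rightarrow 0$, combined with Proposition \ref{prop1}.

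\textbf{Chern classes and cohomology, items (3)--(5).} Since $c(\mathcal{O})=1$, the Whitney formula gives $c(\mathcal{L}_{\mathrm{ker}})=c(\mathcal{L}^{\perp})$, so (3) follows directly from Proposition \ref{prop1}(1). For (4), use the long exact sequence of the first sequence. Proposition \ref{prop1}(5) says every cohomology group of $\mathcal{L}^{\perp}$ vanishes. Hence $H^{i}(\mathcal{L}_{\mathrm{ker}})\cong H^{i-1}(\mathcal{O})$, which is $\mathbb{C}$ for $i=1$ and $0$ otherwise. For (5), use the dual sequence with Proposition \ref{prop1}(6) and $H^{i}(\mathcal{O})=0$ for $i>0$. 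This gives $H^{i}(\mathcal{L}_{\mathrm{ker}}^{\vee})=0$ for $i>0$ and $h^0(\mathcal{L}_{\mathrm{ker}}^{\vee})=(n+2)-1=n+1$.

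\textbf{Global generation and splitting type, items (1) and (2).} $\mathcal{L}_{\mathrm{ker}}^{\vee}$ is a quotient of the globally generated bundle $(\mathcal{L}^{\perp})^{\vee}$, so it is globally generated. Restrict to a line $\ell$. The bundle $\mathcal{L}_{\mathrm{ker}}^{\vee}|_\ell$ is globally generated, so all its summands have degree $\ge 0$. Its degree is $1$, because $c_1(\mathcal{L}_{\mathrm{ker}})=-X_1$. Therefore its splitting type is $(1,0,\dots,0)$ on every line, and dualizing gives uniformity of $\mathcal{L}_{\mathrm{ker}}$ with splitting type $(0,\dots,0,-1)$. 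This proves (2), and it does not need stability.

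\textbf{Stability, the main obstacle.} $\mathcal{L}_{\mathrm{ker}}$ is a subsheaf of the stable bundle $\mathcal{L}^{\perp}$, but that alone does not make it stable. Its slope is $-1/n$, which is smaller than the slope $-1/(n+1)$ of $\mathcal{L}^{\perp}$. I would use the Hoppe-type criterion on $\mathbb{Q}^n$: it suffices to show $H^0(\wedge^{q}\mathcal{L}_{\mathrm{ker}}(t))=0$ for $1\le q\le n-1$ and all $t$ with $qt\le q/n$ (reflexive subsheaves of rank $q$ may be replaced by their determinants), that is, $H^0(\wedge^q\mathcal{L}_{\mathrm{ker}})=0$. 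From the first sequence, $\wedge^q\mathcal{L}_{\mathrm{ker}}\subset \wedge^q\mathcal{L}^{\perp}$. By Proposition \ref{prop1}(2), $\mathcal{L}^{\perp}=\Omega_{\mathbb{P}^{n+1}}(1)|_{\mathbb{Q}^n}$, so $\wedge^q\mathcal{L}^{\perp}=\Omega^q_{\mathbb{P}^{n+1}}(q)|_{\mathbb{Q}^n}$. This bundle has no sections for $q\ge 1$: use the restriction sequence $0\to\Omega^q_{\mathbb{P}^{n+1}}(q-2)\to\Omega^q_{\mathbb{P}^{n+1}}(q)\to\Omega^q_{\mathbb{P}^{n+1}}(q)|_{\mathbb{Q}^n}\to0$ together with Bott vanishing on $\mathbb{P}^{n+1}$, which gives $H^0(\Omega^q(q))=H^1(\Omega^q(q-2))=0$. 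The borderline twist $t=0$ is already covered, since positive twists are excluded.

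An alternative route is available if one prefers to avoid the criterion. Suppose $F\subset\mathcal{L}_{\mathrm{ker}}$ is destabilizing with $c_1(F)=0$. Since $\mathcal{L}_{\mathrm{ker}}$ is uniform of type $(0,\dots,0,-1)$, a standard argument on general lines would force $F$ to be trivial. That contradicts $H^0(\mathcal{L}_{\mathrm{ker}})=0$, which was established in (4).
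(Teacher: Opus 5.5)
Your proposal is correct. For items (2)--(5) and the global generation part of (1), it is what the paper means by ``the arguments are analogous to those for Proposition \ref{prop1}''. That is: Whitney's formula, the long exact cohomology sequences of $0\to\mathcal{L}_{\mathrm{ker}}\to\mathcal{L}^{\perp}\to\mathcal{O}\to 0$ and of its dual, and global generation of $\mathcal{L}_{\mathrm{ker}}^{\vee}$ together with degree $1$ on lines for the splitting type.

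The real difference is stability. The paper gives no separate argument. The analogy with Proposition \ref{prop1}(3) points to the quoted stability theorems for $T_{\mathbb{P}^{n+1}}|_{\mathbb{Q}^n}$. As you rightly note, these do not pass automatically to the subbundle $\mathcal{L}_{\mathrm{ker}}$ of smaller slope.

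Your Hoppe-type route is self-contained and elementary:
\begin{itemize}
\item the slope $-1/n$ forces only twists $t\le 0$;
\item so it suffices that $H^0(\wedge^q\mathcal{L}_{\mathrm{ker}})\subset H^0(\Omega^q_{\mathbb{P}^{n+1}}(q)|_{\mathbb{Q}^n})=0$;
\item this vanishing follows from the restriction sequence and Bott's formula.
\end{itemize}
Running the same computation for $1\le q\le n$ also reproves Proposition \ref{prop1}(3) for $\mathcal{L}^{\perp}$ without external references. The citation route is shorter but leaves the step for $\mathcal{L}_{\mathrm{ker}}$ implicit; yours actually proves it.

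Two minor remarks.
\begin{itemize}
\item The twist condition should read $t\le q/n$ (i.e.\ $t\le -q\mu$), not $qt\le q/n$. Both reduce to $t\le 0$, so nothing changes.
\item In your alternative route, triviality on general lines does not by itself make $F$ trivial: a null correlation bundle on $\mathbb{P}^3$ is trivial on a general line but not trivial. To close this route, use global generation of $\mathcal{L}_{\mathrm{ker}}^{\vee}$. The image of $\mathcal{L}_{\mathrm{ker}}^{\vee}\to F^{\vee}$ is a globally generated torsion-free sheaf with $c_1=0$, so its double dual, which is $F^{\vee}$, is trivial. Then $F\cong\mathcal{O}^{\oplus \mathrm{rk} F}$, contradicting $H^0(\mathcal{L}_{\mathrm{ker}})=0$.
\end{itemize}
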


Taking the dual of an orthogonal kernel bundle, we see that it can be realized as the quotient bundle of a bundle monomorphism $a\colon\mathcal{O}_{\mathbb{Q}^{n}}\longrightarrow (\mathcal{L}^{\perp})^{\vee}$. Next, we describe the set of isomorphism classes of orthogonal kernel bundles on $\mathbb{Q}^{n}$. To this end, we must investigate, which bundle monomorphisms define isomorphic quotient bundles.

\begin{lemma}\label{lemma}
Let $a, a'\colon \mathcal{O}_{\mathbb{Q}^{n}}\longrightarrow (\mathcal{L}^{\perp})^{\vee}$ be two bundle monomorphisms, and let $E, E'$ be the associated quotient bundles. Then $E$ and $E'$ are isomorphic if and only if there exists a constant $c \in \mathbb{C} \setminus \{0\}$ such that $a' = c a$.
\end{lemma}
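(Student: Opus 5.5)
One direction is immediate. If $a'=ca$ with $c\neq 0$, then $a$ and $a'$ have the same image subbundle of $(\mathcal{L}^{\perp})^{\vee}$. Hence the quotients coincide, and $E\cong E'$.

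For the converse, suppose $\psi\colon E\to E'$ is an isomorphism. The plan is to lift $\psi$ to an endomorphism of $(\mathcal{L}^{\perp})^{\vee}$ and then use that $(\mathcal{L}^{\perp})^{\vee}$ is simple. Write $\pi\colon(\mathcal{L}^{\perp})^{\vee}\to E$ and $\pi'\colon (\mathcal{L}^{\perp})^{\vee}\to E'$ for the quotient maps. Apply $\operatorname{Hom}((\mathcal{L}^{\perp})^{\vee},-)$ to $0\to\mathcal{O}\xrightarrow{a'}(\mathcal{L}^{\perp})^{\vee}\xrightarrow{\pi'}E'\to0$. The obstruction to lifting $\psi\circ\pi$ lies in
\[
\operatorname{Ext}^1((\mathcal{L}^{\perp})^{\vee},\mathcal{O})=H^1(\mathbb{Q}^n,\mathcal{L}^{\perp}),
\]
which vanishes by Proposition \ref{prop1}(5). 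So there is $\Phi\colon(\mathcal{L}^{\perp})^{\vee}\to(\mathcal{L}^{\perp})^{\vee}$ with $\pi'\circ\Phi=\psi\circ\pi$.

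By Proposition \ref{prop1}(3), $\mathcal{L}^{\perp}$ is stable, hence so is its dual, and therefore simple. Thus $\Phi=\lambda\cdot\mathrm{id}$ for some $\lambda\in\mathbb{C}$. Then $\pi'=\lambda^{-1}\psi\circ\pi$ forces $\lambda\neq0$, since otherwise $\psi\circ\pi=0$, which is absurd because $\psi\circ\pi$ is surjective onto $E'\neq0$. Consequently $\ker\pi'=\ker\pi$, i.e. $a'(\mathcal{O})=a(\mathcal{O})$ as subbundles. Both $a$ and $a'$ are then nowhere-vanishing sections spanning the same line subbundle, so $a'=fa$ for a nowhere-zero function $f\in H^0(\mathcal{O}^{*})=\mathbb{C}^{*}$. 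This gives $a'=ca$.

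The main point to check is that the lift exists, and this holds because $H^1(\mathcal{L}^{\perp})=0$. Simplicity of the dual follows from stability, since a stable bundle has only scalar endomorphisms. Alternatively, one can compute $h^0(\mathcal{E}nd)$ directly: $\mathcal{L}^{\perp}\otimes(\mathcal{L}^{\perp})^{\vee}=\Omega_{\mathbb{P}^{n+1}}(1)\otimes T_{\mathbb{P}^{n+1}}(-1)|_{\mathbb{Q}^n}$, whose $h^0$ equals $1$ by restriction sequences and Bott vanishing. I would rely on the stability statement already proved.
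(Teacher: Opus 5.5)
Your proof is correct and follows the paper's argument: you lift $\psi$ to an endomorphism of $(\mathcal{L}^{\perp})^{\vee}$ using $H^1(\mathbb{Q}^n,\mathcal{L}^{\perp})=0$, and conclude from the simplicity of the stable bundle $(\mathcal{L}^{\perp})^{\vee}$. The differences are minor. The paper invokes \cite[Lemma 4.1.3]{OSS}, which also uses $H^0(\mathcal{L}^{\perp})=0$, to lift $\psi$ to a full morphism of short exact sequences, and then compares the scalars on $\mathcal{O}_{\mathbb{Q}^n}$ and on $(\mathcal{L}^{\perp})^{\vee}$. You lift only the middle term, read off $\ker\pi'=\ker\pi$, and use $H^0(\mathcal{O}^{*})=\mathbb{C}^{*}$; along the way you also justify why the scalar is nonzero, which the paper leaves implicit.
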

\begin{proof}
The quotients defined by $a$ and $ca$ are clearly isomorphic. Conversely, suppose
\[
a, a'\colon \mathcal{O}_{\mathbb{Q}^{n}}\longrightarrow (\mathcal{L}^{\perp})^{\vee}
\]
are bundle monomorphisms and let $\psi\colon E \rightarrow E'$ be an isomorphism of the quotients. %It is readily seen that every quotient bundle given by the above bundle monomorphism can be realized as the cohomology of a monad $0\rightarrow \mathcal{O}_{\mathbb{Q}^{n}}\rightarrow (\mathcal{L}^{\perp})^{\vee} \rightarrow 0\rightarrow 0$. 
By Proposition \ref{prop1},  $H^0(\mathcal{L}^{\perp})=H^1(\mathcal{L}^{\perp})=0$. It follows from \cite[Lemma 4.1.3]{OSS} that
every isomorphism $\psi\colon E \to E'$ can be lifted to a morphism between the corresponding short exact sequences, which gives
%an isomorphism of the associated monads, we thus get 
a commutative diagram
\[
\xymatrix{
	0\ar[r]& \mathcal{O}_{\mathbb{Q}^{n}}\ar[r]^a\ar[d]^\Psi& (\mathcal{L}^{\perp})^{\vee}\ar[d]^{\Psi'}\ar[r]& E\ar[d]^\psi\ar[r]&0\\
	0\ar[r]& \mathcal{O}_{\mathbb{Q}^{n}}\ar[r]^{a'}& (\mathcal{L}^{\perp})^{\vee}\ar[r] &E'\ar[r]&0.
}
\]
It is clear that $\mathcal{O}_{\mathbb{Q}^{n}}$ is simple. By Proposition \ref{prop1}, $(\mathcal{L}^{\perp})^{\vee}$ is stable and therefore simple, i.e., $\Psi$ and $\Psi'$ are homotheties:
\[
\Psi = \lambda \operatorname{id}_{\mathcal{O}_{\mathbb{Q}^{n}}},\quad \Psi' = \lambda' \operatorname{id}_{(\mathcal{L}^{\perp})^{\vee}}, ~\lambda,\lambda' \in \mathbb{C} \setminus \{0\}.
\]
Hence we have $a' = c a$ with $c = \lambda/\lambda'$.
\end{proof}

By Proposition \ref{prop1}, we see that $(\mathcal{L}^{\perp})^{\vee}$ is generated by global sections and $h^0(\mathbb{Q}^{n}, \mathcal{L}^{\perp})^{\vee})=n+2$. Since $\mathrm{rk}(\mathcal{L}^{\perp})=n+1>n$, every generic section $s$ of $(\mathcal{L}^{\perp})^{\vee}$ is nowhere-vanishing (cf. \cite[Corollary 5.5]{EH}), which is equivalent to saying that $s$ defines a trivial subbundle. Notice that this quotient bundle is exactly the dual of an orthogonal kernel bundle. Combining with Lemma \ref{lemma}, we immediately obtain the following deduction.

\begin{corollary}
There is a bijection from the set of isomorphism classes of orthogonal kernel bundles on $\mathbb{Q}^{n}$ onto a quasi-projective variety of dimension $n+1$.
\end{corollary}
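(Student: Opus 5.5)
The plan is to identify the set of isomorphism classes of orthogonal kernel bundles with an open subset of a projective space of sections. Dualizing, an orthogonal kernel bundle $\mathcal{L}_{\mathrm{ker}}$ is the same as the dual of the quotient $(\mathcal{L}^{\perp})^{\vee}/a(\mathcal{O}_{\mathbb{Q}^n})$ for a bundle monomorphism $a\colon\mathcal{O}_{\mathbb{Q}^n}\to(\mathcal{L}^{\perp})^{\vee}$. Such an $a$ is exactly a global section $s\in V:=H^0(\mathbb{Q}^n,(\mathcal{L}^{\perp})^{\vee})$ that vanishes nowhere. Since taking duals is a bijection on isomorphism classes, it suffices to parametrize the quotients.

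First, I will set $U\subset V\setminus\{0\}$ to be the set of nowhere-vanishing sections. By Proposition \ref{prop1}(6), $\dim V=n+2$. The complement of $U$ in $V$ is the image under the second projection of the incidence variety
\[
Z=\{(x,s)\in\mathbb{Q}^n\times V \mid s(x)=0\}.
\]
This set is closed because $\mathbb{Q}^n$ is projective, so $U$ is Zariski open. It is also nonempty: $(\mathcal{L}^{\perp})^{\vee}$ is globally generated of rank $n+1>n$, so the cited \cite[Corollary 5.5]{EH} shows that a generic section vanishes nowhere. One can also see this by a dimension count: global generation makes $Z\to\mathbb{Q}^n$ a vector bundle with fibers of dimension $(n+2)-(n+1)=1$, so $\dim Z=n+1<n+2$. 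The set $U$ is stable under scaling by $\mathbb{C}^*$, so it descends to a nonempty open subset $\mathbb{P}(U)\subset\mathbb{P}(V)\cong\mathbb{P}^{n+1}$. This is a quasi-projective variety of dimension $n+1$.

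Next, I will define the map $\mathbb{P}(U)\to\{\text{iso classes of orthogonal kernel bundles}\}$ by $[s]\mapsto\bigl((\mathcal{L}^{\perp})^{\vee}/s\mathcal{O}\bigr)^{\vee}$. It is well defined because $s$ and $cs$ give isomorphic quotients. It is surjective by the discussion above, since every epimorphism $\mathcal{L}^{\perp}\to\mathcal{O}$ dualizes to a nowhere-vanishing section. It is injective by Lemma \ref{lemma}, because isomorphic quotients force $a'=ca$. Inverting this map gives the required bijection onto $\mathbb{P}(U)$.

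The only point needing care is the openness and nonemptiness of $U$, and the incidence dimension count above handles both. Everything else is a direct consequence of Lemma \ref{lemma} and Proposition \ref{prop1}.
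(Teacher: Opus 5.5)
Your proposal is correct and follows the paper's own argument. Nowhere-vanishing sections of $(\mathcal{L}^{\perp})^{\vee}$ modulo scalars give a nonempty open subset of $\mathbb{P}(H^0(\mathbb{Q}^n,(\mathcal{L}^{\perp})^{\vee}))\cong\mathbb{P}^{n+1}$; surjectivity comes from dualizing and injectivity from Lemma \ref{lemma}. Your incidence-variety argument simply makes explicit the openness and nonemptiness that the paper takes from \cite[Corollary 5.5]{EH}. In fact the kernel of the evaluation map is $\mathcal{O}_{\mathbb{Q}^n}(-1)$, by the restricted Euler sequence, so the parameter space is exactly $\mathbb{P}^{n+1}\setminus\mathbb{Q}^n$.
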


	\subsection{Uniform $4$-bundles on $\mathbb{Q}^3$ with splitting type $(0,0,0,-1)$}
	In this section, we further classify uniform bundles of rank $4$ on $\mathbb{Q}^3$ with splitting type $(0,0,0,-1)$, which will be used in our classification of rank $5$ uniform bundles on $\mathbb{Q}^5$. We first prove an interesting result concerning rank $3$ vector bundles on $\mathbb P^3$. 
	
	\begin{theorem}\label{P3}
Let $G$ be a rank $3$ bundle on $\mathbb P^3$ with $c_1(G)=2$, $c_2(G)=2$ and $c_3(G)=0$. 
Suppose that $G|L\cong \oh_L(1)\oplus\oh_L(1)\oplus\oh_L$ for some line $L\subset\mathbb P^3$. Then $G$ is isomorphic to $\Omega_{\mathbb P^3}(2)$ or $N(1)\oplus \oh_{\mathbb P^3}$, where $N$ denotes a null correlation bundle.
	\end{theorem}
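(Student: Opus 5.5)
The plan is to produce a nowhere-vanishing section of $G$ and then identify the rank $2$ quotient. The target bundles suggest this route. One has $c(\Omega_{\mathbb P^3}(2))=(1+h)^4/(1+2h)=1+2h+2h^2$. Moreover there is the classical non-split sequence $0\to\oh_{\mathbb P^3}\to\Omega_{\mathbb P^3}(2)\to N(1)\to 0$, and $c(N(1))=1+2h+2h^2$ as well. So both candidates are extensions of $N(1)$ by $\oh_{\mathbb P^3}$, and the proof should show that $G$ is also such an extension.

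\emph{Step 1 (splitting on the general line and sections).} By semicontinuity, $G|_{\ell}\cong\oh_\ell(1)^{\oplus 2}\oplus\oh_\ell$ for a general line $\ell$, since this is the most balanced type with $c_1=2$. I will compute $\chi(G)$ by Riemann--Roch from $(c_1,c_2,c_3)=(2,2,0)$. Then I will kill higher cohomology with Serre duality. We have $h^3(G)=h^0(G^\vee(-4))=0$, because $G^\vee(-4)$ has negative splitting type on the general line. For $h^2(G)=h^1(G^\vee(-4))$, I will restrict to a general plane and use the standard restriction sequences to obtain vanishing. This yields $h^0(G)\ge \chi(G)>0$.

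\emph{Step 2 (a nowhere-vanishing section; main obstacle).} Take a general $s\in H^0(G)$. Since $c_3(G)=0$, if $Z(s)$ has the expected codimension $3$ then it is empty. The difficulty is to rule out a zero locus of dimension $\ge 1$.

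A divisorial part would make $s$ factor through $\oh(d)\hookrightarrow G$ with $d\ge 1$. Restricting to the general line forces $d=1$. In that case I would analyse $G/\oh(1)$, a rank $2$ sheaf with $c_1=1$, whose general splitting is $(1,0)$. Chern-class bookkeeping, using $c_2=2$ and $c_3=0$, should then give a contradiction.

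A curve component $C$ of the zero locus should be excluded by a similar double-point or Chern computation in the style of the argument for $N$ in Claim \ref{claim:1}. I expect this step to be the hardest. If it resists, my first fallback is to exploit the hypothesis on $L$ directly: I would use enough sections to show that $G$ is globally generated off a finite set, and then apply \cite[Corollary 5.5]{EH}-type genericity.

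\emph{Step 3 (the quotient).} Once we have $0\to\oh_{\mathbb P^3}\to G\to Q\to 0$ with $Q$ a rank $2$ bundle with $c_1(Q)=2$ and $c_2(Q)=2$, put $N:=Q(-1)$, so $c_1(N)=0$ and $c_2(N)=1$. On the general line $Q$ is a quotient of $\oh(1)^2\oplus\oh$ with $c_1=2$. After choosing $s$ generally, I expect it to be of type $(1,1)$, so $N$ is trivial on the general line and hence semistable by \cite[Lemma 2.2.1]{OSS}. Then exactly the argument in Claim \ref{claim:1} applies: a section of $N$ would vanish on a line $Y$ with $\omega_Y\cong\oh_Y(-4)$, which is impossible. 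So $N$ is stable and, by \cite[Lemma 4.3.2]{OSS}, a null correlation bundle.

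\emph{Step 4 (extensions).} Finally, $\mathrm{Ext}^1(N(1),\oh)=H^1(N^\vee(-1))=H^1(N(-1))\cong\mathbb C$ for a null correlation bundle. The split extension gives $N(1)\oplus\oh_{\mathbb P^3}$. The unique non-split extension class up to scalar gives $\Omega_{\mathbb P^3}(2)$, by comparison with the classical sequence above. This yields exactly the two alternatives in the statement.
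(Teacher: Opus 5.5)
\textbf{There is a genuine gap: Steps 1 and 2, which carry the whole content of the theorem, are not proved, and the tools you propose for them cannot work.} Steps 3 and 4 are fine once a nowhere-vanishing section exists. Your ``expectation'' in Step 3 can even be proved: if $Q(-1)$ had generic type $(1,-1)$ it would be unstable, so $\mathcal{O}(2)\to Q$. This map lifts to $G$ because $H^1(\mathcal{O}(-2))=0$, contradicting the type $(1,1,0)$ on a general line.

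\emph{Step 1.} Restricting to a plane $H$ only gives monotonicity. Since $H^2(G_H(k))=0$ for $k\ge -2$, you get $h^2(G(k))\le h^2(G(k-1))$. This never yields $h^2(G)=0$ without an initial vanishing you do not have. Dually, descending induction on $h^1(G^\vee(-k))$ needs $H^1(G_H(m))=0$ for $m\ge 1$, which is again unknown.

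\emph{Step 2.} A Chern computation cannot exclude a curve of zeros. If $Z(s)$ is a smooth curve $C$ of degree $e$ and genus $g$, its excess contribution to $c_3(G)$ is $\deg G|_C-\deg N_{C/\mathbb P^3}=2-2e-2g$. This vanishes for a line, so $c_3(G)=0$ is fully compatible with $Z(s)$ being a union of lines. This really happens: the sections $(t,0)$ of $N(1)\oplus\mathcal{O}$ vanish on two skew lines. The divisorial case is no better. There $G/\mathcal{O}(1)$ is only torsion free, with $c(G/\mathcal{O}(1))=1+h+h^2-h^3$, and a negative $c_3$ is no contradiction for a torsion-free sheaf; one needs reflexive-sheaf arguments. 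What you actually need is control of the base locus of $H^0(G)$, as in your fallback ``generated off a finite set''. Nothing in the proposal supplies it.

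\textbf{The missing idea is stability.} Since $\gcd(\mathrm{rk}\,G,c_1(G))=1$, $G$ is either stable or not semistable.

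If $G$ is stable, then $G(-1)$ is a stable rank $3$ bundle with $(c_1,c_2,c_3)=(-1,1,-1)$. The paper quotes \cite[Corollary 3.4]{Sch} to conclude $G(-1)\cong\Omega_{\mathbb P^3}(1)$. Your Steps 1--2 would in effect have to reprove this classification.

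If $G$ is not semistable, the paper takes the maximal destabilizing subsheaf $F$.
\begin{itemize}
\item For $\mathrm{rk}\,F=2$, the line $L$ forces $c_1(F)=2$ and $G/F=I_\Delta$. Bogomolov and a depth argument rule out a one-dimensional $\Delta$. Dualizing then gives $0\to\mathcal{O}\to G^\vee\to N(-1)\to 0$, which splits because $H^1(N(1))=0$.
\item For $\mathrm{rk}\,F=1$, Bogomolov and dualization lead to a rank $2$ reflexive sheaf with $c_3=-1$, which is impossible.
\end{itemize}
In the unstable branch the trivial summand appears as a quotient of $G$ (a section of $G^\vee$), produced directly by the Harder--Narasimhan filtration, not as a general section of $G$.
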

	\begin{proof}
  If $G$ is uniform or stable, then by the classification of uniform $3$-bundles on $\mathbb P^3$ and \cite[Corollary 3.4]{Sch}, $G$ can only be $\Omega_{\mathbb P^3}(2)$. It remains to treat bundles that are neither uniform nor stable. Let $G$ be such a bundle. Since $c_1(G)=2$, $G$ is not stable if and only if $G$ is not semistable. Let $F$ denote the maximal destabilizing saturated subsheaf of $G$. Then $F$ is a semistable reflexive sheaf and $G/F$ is torsion free. \\

(1) Assume $\mathrm{rk}(F)=2$. Since the slope $\mu(F)=\frac{c_1(F)}{2}>\mu(G)=\frac{c_1(G)}{3}=\frac{2}{3}$, we deduce $c_1(F)\ge 2$. Meanwhile, by the hypothesis $G|L\cong \oh_L(1)\oplus\oh_L(1)\oplus\oh_L$, we have $c_1(F)\le 2$. Consequently, $c_1(F)= 2$. It follows that $G/F$ is an ideal sheaf $I_{\Delta}$ with $\dim \Delta\le 1$.

If $\Delta$ contains a pure $1$-dimensional cycle $\Delta_1$ of degree $d$, then $c_1(I_\Delta)=0$ and $c_2(I_\Delta)=d$. From the Chern classes of $G$, we get $c_1(F)=2$ and $c_2(F)=2-d$. As $F$ is semistable, one has $c_1^2(F)\le 4c_2(F)$, which implies that $d=1$ and $F$ is not stable (for the Bogomolov inequality attains equality here). By \cite[Lemma 3.1]{Har}, $h^0(\mathbb P^3, F(-1))\ne 0$ and thus we have an exact sequence $0\rightarrow \oh_{\mathbb P^3}\rightarrow F(-1)\rightarrow I_C\rightarrow 0$, where $C$ is empty or a Cohen-Macaulay curve. Since $[C]=c_2(I_C)=c_2(F(-1))=0$, we obtain $I_C=\oh_{\mathbb P^3}$. Consequently, $F(-1)\cong \oh^{\oplus 2}_{\mathbb P^3}$, and therefore $F\cong \oh_{\mathbb P^3}(1)^{\oplus 2}$. Furthermore, combining the exact sequence 
\begin{align}\label{depth}
0\rightarrow F\rightarrow G\rightarrow I_\Delta\rightarrow 0
\end{align} 
with the Chern classes of $F$ and $G$, we deduce $c_3(I_\Delta)=-2$. This implies that $\Delta$ is not purely one-dimensional and hence carries embedded or isolated points. On the other hand, for any $p\in \mathbb P^3$, set $R=\mathcal{O}_{\mathbb P^3,p}$ and let $\operatorname{pd}(M)$ be the projective dimension of an $R$-module $M$. From the exact sequence (\ref{depth}), we obtain $\operatorname{pd}(I_{\Delta,p})\le \max\{\operatorname{pd}(G_p), \operatorname{pd}(F_p)+1\}=1$. By the Auslander‑Buchsbaum formula, $\operatorname{depth}(I_{\Delta,p})\ge 2$. The depth lemma yields $\operatorname{depth}(\oh_{\Delta,p})\ge 1$, so that $p$ cannot be an embedded or isolated point of $\Delta$, a contradiction. Therefore, we have $\dim\Delta =0$.\\
 %Take such a point $p$, and let $L$ be a line meeting $\Delta$ only at $p$.Then $\mathcal{T}or_1^{\mathcal{O}_X}( I_\Delta,\mathcal{O}_L)\cong \mathcal{T}or_2^{\mathcal{O}_X}( \mathcal{O}_\Delta,\mathcal{O}_L)$ is the skyscraper sheaf $\kappa(p)$.On the other hand, restricting the above exact sequence to $L$ yields the long exact sequence $0\rightarrow \mathcal{T}or_1^{\mathcal{O}_X}( I_\Delta,\mathcal{O}_L)\rightarrow F|_{L}\rightarrow  G|_{L}\rightarrow I_\Delta|_{L}\rightarrow 0$. Since $F|_{L}$ is a vector bundle on $L$, $\mathcal{T}or_1^{\mathcal{O}_X}( I_\Delta,\mathcal{O}_L)$must be torsion free, contrary to the above. 
%Combining the exact sequence $0\rightarrow F\rightarrow G\rightarrow I_{\Delta}\rightarrow 0$ with $c_1(G)=2$, we deduce that the restriction $I_{\Delta}|L$ is trivial for any line $L$. This yields $G|L\cong \oh_L(1)\oplus\oh_L(1)\oplus\oh_L$ for every line L, which contradicts the non-uniformity of $G$. Thus this case is impossible. \\

 By Serre duality, we easily get $\mathcal{E}xt^i(\oh_{\Delta},\oh_{\mathbb P^3})=0$ for $0\le i\le 2$ and thus $\mathcal{E}xt^0(I_{\Delta},\oh_{\mathbb P^3})=\oh_{\mathbb P^3}$, $\mathcal{E}xt^1(I_{\Delta},\oh_{\mathbb P^3})=0$. Then applying the $\mathcal{H}om(-,\oh_{\mathbb P^3})$ functor to the exact sequence $0\rightarrow F\rightarrow G\rightarrow I_{\Delta}\rightarrow 0$, we obtain a new exact sequence 
\[0\rightarrow \oh_{\mathbb P^3}\rightarrow G^{\vee}\rightarrow F^{\vee}\rightarrow 0. \]
The condition $c_3(G)=0$ forces $c_3(F^{\vee})=0$, so $F^{\vee}$ is a vector bundle with $c_1(F^{\vee})=c_1(G^{\vee})=-2$, $c_2(F^{\vee})=c_2(G^{\vee})=2$. Since $F^{\vee}$ is semistable, $F^{\vee}(1)$ is a null correlation bundle $N$. So $F^{\vee}$ is $N(-1)$. Since the group $\mathrm{Ext}^1(N(-1),\oh_{\mathbb P^3})$ vanishes, we have $G^{\vee}\cong N(-1)\oplus \oh_{\mathbb P^3}$ and thus $G\cong N(1)\oplus \oh_{\mathbb P^3}$.\\
 
\iffalse
and $\deg\Delta=l$, which yields $c_1(I_\Delta)=c_2(I_\Delta)=0$ and $c_3(I_\Delta)=2l$. Then $c_1(F)=c_2(F)=2$
and $c_3(F)=2l$. We shall discuss three cases separately.
\begin{itemize}
	\item Case 1. $F$ is stable and $F=N(1)$, where $N$ a null correlation bundle. 
	
	Then $c_3(F)=0$, which implies $I_\Delta=\oh_{\mathbb P^3}$. Since the group $\mathrm{Ext}^1(\oh_{\mathbb P^3},N(1))$ vanishes, we have $G\cong N(1)\oplus \oh_{\mathbb P^3}$.
	\item Case 2. $F$ is stable but $F$ is not $N(1)$. 
	
	Let's consider the stable reflexive sheaf $F(-1)$,whose Chern classes are $c_1(F(-1))=0$ and $c_2(F(-1))=1$. As F is not null correlation bundle, it follows from \cite[Remark 3,3,1]{Har1} that there exists a general plane $\mathbb P^2$ (disjoint from the singular set of $F(-1)$) such that $F(-1)|\mathbb P^2$ is still stable. However, there is no stable $2$-bundle on $\mathbb P^2$ with $c_1=0$ and $c_2=1$, which gives a contradiction. Hence this case is impossible.
	
	\item Case 3. $F$ is not stable.
\end{itemize}
In conclusion, we prove that $G\cong N(1)\oplus \oh_{\mathbb P^3}$ whenever $\mathrm{rk}(F)=2$. We further eliminate the case $\mathrm{rk}(F)=1$, which finishes the proof of the theorem.
\fi

(2) Assume $\mathrm{rk}(F)=1$. Similar to the argument in (1), one can deduce that $c_1(F)=1$. Suppose $0\subset F\subset F'\subset G$ is the H-N filtration of $G$. Then the sheaves $F$, $F'/F$ and $G/F'$ are of rank $1$, and $c_1(F)>c_1(F'/F)>c_1(G/F')$. Since $c_1(F)=1$, we would have $c_1(F'/F)\le0$, $c_1(G/F')\le -1$, contrary to the hypothesis $c_1(G)=2$. Therefore, the H-N filtration of $G$ has exactly two terms. Let $\widetilde{F}$ be the dual sheaf of $G/F$. Then $\widetilde{F}$ is the maximal destabilizing saturated subsheaf of $G^{\vee}$ with $G^{\vee}/\widetilde{F}=I_{\Gamma}(-1)$, where $\dim \Gamma\le 1$. 
We intend to exclude this scenario. %Following the same line of reasoning as in (1), we present a concise proof for ease of reference. 

If $\Gamma$ contains a pure $1$-dimensional cycle $\Gamma_1$ of degree $m$, one computes $c_1(\widetilde{F})=-1$ and $c_2(\widetilde{F})=1-m$. Since $\widetilde{F}$ is semistable, the inequality $c_1^2(\widetilde{F})\le 4c_2(\widetilde{F})$ gives $m=0$, which immediately yields a contradiction. If $\dim \Gamma=0$, then from 
Serre duality, we would have 
 $\mathcal{E}xt^0(I_{\Gamma}(-1),\oh_{\mathbb P^3})=\oh_{\mathbb P^3}(1)$, $\mathcal{E}xt^1(I_{\Gamma}(-1),\oh_{\mathbb P^3})=0$. Then applying the $\mathcal{H}om(-,\oh_{\mathbb P^3})$ functor to the exact sequence $0\rightarrow \widetilde{F}\rightarrow G^{\vee}\rightarrow I_{\Gamma}(-1)\rightarrow 0$, we obtain a new exact sequence 
\[0\rightarrow \oh_{\mathbb P^3}(1)\rightarrow G\rightarrow \widetilde{F}^{\vee}\rightarrow 0. \]
The conditions $c_1(G)=2$, $c_2(G)=2$ and $c_3(G)=0$ force $c_3(\widetilde{F}^{\vee})=-1$. Nevertheless, since $\widetilde{F}^{\vee}$ is reflexive, this is impossible by \cite[Proposition 2.6]{Har}. Then we are done.
\end{proof}

%Recall $\mathcal{L}^{\perp}$ is the $\mathcal{Q}$-orthogonal complement of $\mathcal{O}_{\mathbb{Q}^{n}}(-1)$. 
\begin{theorem}\label{uniform Q3 2}
Let $E$ be a uniform bundle of rank $4$ on $\mathbb{Q}^{3}$ of splitting type $(0,0,0,-1)$. Then $E$ either splits, or is isomorphic to
\[
\mathcal{S}(-1)\oplus \mathcal{O}^{\oplus 2}_{\mathbb{Q}^{3}},~ \mathcal{L}^{\perp},~\text{or}~\mathcal{L}_{\mathrm{ker}}\oplus \mathcal{O}_{\mathbb{Q}^{3}}.
\]
\end{theorem}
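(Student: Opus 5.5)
Let $E$ be uniform of rank $4$ on $\mathbb{Q}^3$ with splitting type $(0,0,0,-1)$. I would follow the pattern of Case I in the proof of Theorem \ref{uniform Q3}. The relative H-N filtration of $p^*E$ on $\mathcal{U}$ gives
\[
0\rightarrow q^*G \rightarrow p^*E \rightarrow q^*L\otimes p^*\oh_X(-1)\rightarrow 0,
\]
with $G$ a rank $3$ bundle and $L$ a line bundle on $\mathcal{M}\cong\mathbb{P}^3$. Write $c_1(q^*G)=a(X_1+X_2)$, $c_2(q^*G)=b(X_1+X_2)^2$ and $c_3(q^*G)=c(X_1+X_2)^3$. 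The Chern classes of $p^*E$ are pulled back from $X$. So, working in $H^\bullet(\mathcal{U},\mathbb{Q})=\mathbb{Q}[X_1,X_2]/I$, the coefficients of the monomials involving $X_2$ in $c_2$, $c_3$ and $c_4$ must vanish. This gives a polynomial system in $a,b,c$, which I will solve. I expect three solutions, $a=0,-\tfrac12,-1$, i.e.\ $L=\oh_{\mathcal{M}},\oh_{\mathcal{M}}(1),\oh_{\mathcal{M}}(2)$, with $b$ and $c$ determined in each case.

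\textbf{Case $a=0$.} As before, \cite[Lemma 3.1]{FLL} makes $q^*G$ trivial on $p$-fibres. Then $p_*q^*G$ is uniform of type $(0,0,0)$, so it is trivial by \cite[Proposition 1.2]{AW}. Since $H^1(\oh_X(1))=0$, $E$ splits.

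\textbf{Case $a=-\tfrac12$.} On $p$-fibres the quotient is $\oh(1)$ and $q^*G$ restricts to $\oh^2\oplus\oh(-1)$. Pushing forward gives $0\to p_*q^*G\to E\to\mathcal{S}(-1)\to0$, where $p_*q^*G$ is uniform of type $(0,0)$ and hence $\oh^{\oplus2}$. Because $\mathcal{S}$ is ACM, the extension splits and $E\cong\mathcal{S}(-1)\oplus\oh^{\oplus2}$.

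\textbf{Case $a=-1$.} Set $G'=G^\vee$, or more precisely the appropriate twist $G^\vee(\pm1)$, normalised so that $c_1=2$. The Chern computation should give $c_2(G')=2$ and $c_3(G')=0$, matching Theorem \ref{P3}. On $p$-fibres the quotient is $\oh(2)$, and $q^*G$ restricts to $\oh(-1)^2\oplus\oh$ or to something more unbalanced. I would redo Claim \ref{claim:1} in this setting. If $q^*G$ were $\oh(-1)^2\oplus\oh$ on every $p$-fibre, then $p_*q^*G$ would be a line bundle with $R^1p_*q^*G=0$, giving $0\to\oh_X\to E\to T_{\mathbb{Q}^3}(-1)\to 0$. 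That extension splits, and $\oh\oplus T(-1)$ has the wrong splitting type, so this is impossible. Hence the generic $p$-fibre, and therefore the generic line of $\mathbb{P}^3$, gives the splitting $\oh(-1)^2\oplus\oh$ for $G$. This is exactly the hypothesis of Theorem \ref{P3} after dualising and twisting. So $G$ is the corresponding twist of $T_{\mathbb{P}^3}(-2)$ (dual to $\Omega_{\mathbb{P}^3}(2)$), or of $N(-1)\oplus\oh(-1)$.

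\emph{Subcase $G\cong N(-1)\oplus\oh(-1)$.} Here $p^*E$ contains $q^*\oh_{\mathcal{M}}(-1)$. Pushing forward and using the rank $3$ analysis of Theorem \ref{uniform Q3} (which already produced the null-correlation $G$), I would show $E\cong\mathcal{L}_{\mathrm{ker}}\oplus\oh$, or else that $E$ splits. The way to do this is to compute $p_*q^*(\oh_{\mathcal{M}}(-1))=0$ and $R^1=0$, then compare with the sequence for $\mathcal{L}_{\mathrm{ker}}$ via the same Ext diagram argument.

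\emph{Subcase $G$ a twist of $T_{\mathbb{P}^3}$.} Here $p_*q^*G=0$ and $R^1p_*q^*G$ is a line bundle $\oh_X(1)$. Projecting gives $0\to E\to T_{\mathbb{Q}^3}(-1)\oplus ?$; more precisely, use the Euler sequence of $\mathbb{P}^3$ to get $p_*q^*$ data. I expect this to yield $0\to\oh_X(-1)\to E\to T_{\mathbb{Q}^3}(-1)\to0$. The extension is nontrivial, since the trivial one has the wrong splitting type, so by Proposition \ref{prop1} $E\cong\mathcal{L}^\perp$.

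The main obstacle is this last case. One must correctly identify the pushforwards $p_*q^*$ and $R^1p_*q^*$ of $T_{\mathbb{P}^3}$-twists and of $N$-twists, and then resolve the extension classes; this needs the diagram and Snake Lemma argument from Theorem \ref{uniform Q3}. Solving the polynomial system for $(a,b,c)$ is routine.
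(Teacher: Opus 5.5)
Your cases $a=0$ and $a=-\tfrac12$ are fine. The case $a=-1$ has a genuine gap: it gets the key dichotomy backwards. On a $p$-fibre, $q^*G$ is either $\oh(-1)^{\oplus 2}\oplus\oh$ or $\oh^{\oplus 2}\oplus\oh(-2)$.

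\emph{The all-$\oh(-1)^{\oplus 2}\oplus\oh$ case.} If $q^*G$ is $\oh(-1)^{\oplus 2}\oplus\oh$ on \emph{every} fibre, nothing is contradictory. Then $p_*q^*G$ is a line bundle and $R^1p_*q^*G=0$. Since $c_1(E)=-X_1$ and $c_1(T_{\mathbb{Q}^3}(-1))=0$, that line bundle is $\oh_X(-1)$, not $\oh_X$. The extension $0\to\oh_X(-1)\to E\to T_{\mathbb{Q}^3}(-1)\to 0$ cannot split because of the splitting type. Since $\dim\operatorname{Ext}^1(T_{\mathbb{Q}^3}(-1),\oh(-1))=1$, Proposition \ref{prop1} gives $E\cong\mathcal{L}^{\perp}$. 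So you discarded exactly the case that produces $\mathcal{L}^\perp$. This also clashes with your own subcase $G\cong T_{\mathbb{P}^3}(-2)$: that bundle is uniform, so it has this type on every $p$-fibre. There $p_*q^*G\cong\oh_X(-1)$ and $R^1p_*q^*G=0$, not the reverse as you wrote.

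\emph{The missing case.} Excluding ``every fibre is $\oh(-1)^{\oplus2}\oplus\oh$'' would not give ``the generic fibre is $\oh(-1)^{\oplus2}\oplus\oh$''. That splitting is an open condition, so Theorem \ref{P3} only needs one such fibre. What you must rule out is $q^*G|_{p^{-1}(x)}\cong\oh^{\oplus2}\oplus\oh(-2)$ for all $x$, and your proposal never addresses this. In that case $p_*q^*G$ has rank $2$ and $R^1p_*q^*G=\oh_X(a)$, giving $0\to p_*q^*G\to E\to T_{\mathbb{Q}^3}(-1)\to\oh_X(a)\to0$. The splitting type gives $a\le 1$, and stability of $T_{\mathbb{Q}^3}(-1)$ gives $a\ge1$. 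So $p_*q^*G$ is trivial, and $c(E)\,c(\oh_X(1))=c(T_{\mathbb{Q}^3}(-1))$ forces $c_2(E)=2X_1^2\neq X_1^2$, a contradiction.

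\emph{The null-correlation subcase.} Theorem \ref{P3} applied to $G^\vee$ (with $c_1=2$, $c_2=2$, $c_3=0$) gives $G\cong N(-1)\oplus\oh$, not $N(-1)\oplus\oh(-1)$, which has $c_1=-3$. Pushing forward $q^*\oh_{\mathcal{M}}(-1)$ gives zero and carries no information, and no Ext-diagram comparison is needed. The trivial summand gives a subbundle $\oh_{\mathcal{U}}\subset q^*G\subset p^*E$, which pushes forward to a trivial subbundle $\oh_X\subset E$. The quotient is uniform of type $(0,0,-1)$ with $c_1=-X_1$, $c_2=X_1^2$, $c_3=-X_1^3$, so it is $\mathcal{L}_{\mathrm{ker}}$ by Theorem \ref{uniform Q3}. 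The extension splits because $\operatorname{Ext}^1(\mathcal{L}_{\mathrm{ker}},\oh)=H^1(\mathcal{L}_{\mathrm{ker}}^\vee)=0$ by Proposition \ref{prop2}.

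With these repairs, your ordering is a workable variant of the paper's. You first pin down $G$ via Theorem \ref{P3} and then branch on $G$. The paper instead branches on the fibrewise behaviour and uses Theorem \ref{P3} only in the mixed case.
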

\begin{proof}
	%In order to prove this theorem 
	We adopt the notation established in Section \ref{sec1}, under which the relative H-N filtration of $p^*E$ induces an exact sequence
\begin{align}\label{exact3}
	0\rightarrow q^*G \rightarrow p^*E \rightarrow q^*L\otimes p^*\oh_{\mathbb{Q}^3}(-1) \rightarrow 0,
\end{align} where the rank of $G$ is $3$ and $L$ is a line bundle. 
By the same argument as in Case I of Theorem \ref{uniform Q3}, we conclude that $L$ is $\oh_{\mathcal{M}}$, $\oh_{\mathcal{M}}(1)$ or $\oh_{\mathcal{M}}(2)$.\\

If $L$ is $\oh_{\mathcal{M}}$, then $E$ splits by the same reasoning as in Case I of Theorem \ref{uniform Q3}. If $L$ is $\oh_{\mathcal{M}}(1)$, then
$q^*L\otimes p^*\oh_{\mathbb{Q}^3}(-1)|_{p^{-1}(x)}\cong \oh_{\mathbb{P}^1}(1)$ and $q^*G|_{p^{-1}(x)}\cong\oh^{\oplus 2}_{\mathbb{P}^1}\oplus\oh_{\mathbb{P}^1}(-1)$. Therefore $p_*q^*G$ is a rank $2$ bundle and $R^1p_*q^*G$ vanishes by the base change theorem. By projecting (\ref{exact3}) to $\mathbb{Q}^{3}$, we obtain an exact sequence $0\rightarrow p_*q^*G \rightarrow E \rightarrow p_*q^*\oh_{\mathcal{M}}(1)\otimes \oh_{\mathbb{Q}^{3}}(-1)(=\mathcal{S}(-1))  \rightarrow 0$. Since $\mathcal{S}(-1)$ is uniform of splitting type $(0,-1)$, $p_*q^*G$ is uniform of splitting type $(0,0)$. Consequently, $p_*q^*G$ is trivial and we conclude that 
 $E\cong\mathcal{S}(-1)\oplus \mathcal{O}_{\mathbb{Q}^{3}}^{\oplus 2}$. Its Chern classes are $c_1(E)=-X_1$, $c_2(E)=\frac{1}{2}X_1^2$, $c_3(E)=c_4(E)=0$.\\

If $L$ is $\oh_{\mathcal{M}}(2)$, one deduces readily that $c_1(E)=-X_1, ~c_2(E)=X_1^2 ,~c_3(E)=-X_1^3$. It follows that 
$G$ is a rank $3$ bundle on $\mathbb{P}^3(\cong \mathcal{M})$ with $c_1(G)=-2$, $c_2(G)=2$ and $c_3(G)=0$. 
 Hence $q^*G|_{p^{-1}(x)}$ is either isomorphic to $\oh_{\mathbb{P}^1}(-1)^{\oplus 2}\oplus \oh_{\mathbb{P}^1}$ or  $\oh_{\mathbb{P}^1}^{\oplus 2}\oplus \oh_{\mathbb{P}^1}(-2)$. We shall proceed with our argument in three separate cases.

\begin{enumerate}
	\item For any $x\in \mathbb{Q}^3$, $q^*G|_{p^{-1}(x)}\cong \oh_{\mathbb{P}^1}^{\oplus 2}\oplus \oh_{\mathbb{P}^1}(-2)$.
	
	By the base change theorem, $p_*q^*G$ is a rank $2$ bundle and 
	 $R^1p_*q^*G$ is a line bundle. Write $R^1p_*q^*G=\oh_{\mathbb{Q}^{3}}(a)$. Applying $R^ip_*$ to  (\ref{exact3}), we obtain $0\rightarrow p_*q^*G \rightarrow E \rightarrow T_{\mathbb{Q}^{3}}(-1) \rightarrow \oh_{\mathbb{Q}^{3}}(a)\rightarrow 0$. Then $c_1(p_*q^*G)=a-1$. 
	 Since the splitting type of $E$ is $(0,0,0,-1)$, we have $a-1\le 0$. On the other hand, the stability of $T_{\mathbb{Q}^{3}}(-1)$ implies that $a\ge 1$. This forces $a=1$ and hence $p_*q^*G$ is trivial, as it is uniform of splitting type $(0,0)$. Thus we have $c(\oh^{\oplus 2}_{\mathbb{Q}^{3}})c(T_{\mathbb{Q}^{3}}(-1))=c(E)c(\oh_{\mathbb{Q}^{3}}(1))$, which contradicts the Chern classes of $E$ computed above. This case is therefore impossible.
	 
	 \item For any $x\in \mathbb{Q}^3$, $q^*G|_{p^{-1}(x)}\cong\oh_{\mathbb{P}^1}(-1)^{\oplus 2}\oplus \oh_{\mathbb{P}^1}$.
	 
	 By the base change theorem, $p_*q^*G$ is a line bundle and $R^1p_*q^*G$ vanishes. By projecting (\ref{exact3}) to $\mathbb{Q}^{3}$, we have $0\rightarrow p_*q^*G \rightarrow E \rightarrow T_{\mathbb{Q}^{3}}(-1) \rightarrow 0$. Since $c_1(E)=-X_1$ and $c_1(T_{\mathbb{Q}^{3}}(-1))=0$, we have $p_*q^*G\cong \oh_{\mathbb{Q}^{3}}(-1)$. Then $E$ is isomorphic to $\mathcal{L}^{\perp}$ for $\dim \mathrm{Ext}^1(T_{\mathbb{Q}^{3}}(-1),\oh_{\mathbb{Q}^3}(-1))=1$ and the splitting type of $E$ is $(0,0,0,-1)$.
	 
	 \item There exists a point $x$ for which $q^*G|_{p^{-1}(x)}\cong \oh_{\mathbb{P}^1}^{\oplus 2}\oplus \oh_{\mathbb{P}^1}(-2)$, and a point $y$ for which $q^*G|_{p^{-1}(y)}\cong\oh_{\mathbb{P}^1}(-1)^{\oplus 2}\oplus \oh_{\mathbb{P}^1}$.
	 
	 Thus $G$ is not uniform and there exists a line $L\subset\mathbb{P}^3$ satisfying $G|L\cong \oh_L(-1)\oplus\oh_L(-1)\oplus\oh_L$. Since $c_1(G)=-2$, $c_2(G)=2$ and $c_3(G)=0$, we have $G\cong N(-1)\oplus \oh_{\mathbb P^3}$ 
	 by Theorem \ref{P3}. Consequently, $\oh_{\mathcal{U}}$ is a subbundle of $q^*G$, and it follows from (\ref{exact3}) that $\oh_{\mathcal{U}}$ is also a subbundle of $p^*E$. Taking $p_{*}$, we immediately see that $\oh_{\mathbb{Q}^3}$ is a subbundle of $E$. Denote the quotient bundle by $F$, which is a uniform bundle on $\mathbb{Q}^3$ with splitting type $(0,0,-1)$. Given its Chern classes being $c_1(F)=-X_1, ~c_2(F)=X_1^2 ,~c_3(F)=-X_1^3$,
	Theorem \ref{uniform Q3} implies that $F$ is an orthogonal kernel bundle, i.e., $F\cong\mathcal{L}_{ker}$.
 By Proposition \ref{prop2}, $\dim \mathrm{Ext}^1(\mathcal{L}_{ker}, \oh_{\mathbb Q^3})=0$, so $E$ is isomorphic to $\mathcal{L}_{ker}\oplus \mathcal{O}_{\mathbb{Q}^{3}}$.
\end{enumerate}
\end{proof}
As an immediate consequence of Theorem \ref{uniform Q3 2}, we obtain the following corollary.
	\begin{corollary}\label{uniform Q3 3}
		Let $E$ be a uniform bundle of rank $4$ on $\mathbb{Q}^{3}$ of splitting type $(0,0,0,-1)$ and Chern classes  
		\[
		c_1(E)=-X_1, ~c_2(E)=X_1^2 ,~\text{and}~c_3(E)=-X_1^3.
		\]
		Then $E$ is isomorphic to either $\mathcal{L}^{\perp}$ or $\mathcal{L}_{\mathrm{ker}}\oplus \mathcal{O}_{\mathbb{Q}^{3}}$.
	\end{corollary}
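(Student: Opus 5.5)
The plan is to read off the answer from Theorem \ref{uniform Q3 2}, using the Chern classes to rule out the alternatives. That theorem says a uniform $4$-bundle $E$ on $\mathbb{Q}^3$ with splitting type $(0,0,0,-1)$ either splits, or is one of $\mathcal{S}(-1)\oplus\mathcal{O}^{\oplus 2}_{\mathbb{Q}^3}$, $\mathcal{L}^{\perp}$, $\mathcal{L}_{\mathrm{ker}}\oplus\mathcal{O}_{\mathbb{Q}^3}$. So it suffices to show that the first two alternatives have Chern classes different from $c_1=-X_1$, $c_2=X_1^2$, $c_3=-X_1^3$.

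\textbf{Split case.} If $E$ splits, then because the splitting type is $(0,0,0,-1)$ we must have $E\cong\mathcal{O}^{\oplus 3}_{\mathbb{Q}^3}\oplus\mathcal{O}_{\mathbb{Q}^3}(-1)$. This bundle has $c_2(E)=0\neq X_1^2$, so it is excluded.

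\textbf{Spinor case.} The proof of Theorem \ref{uniform Q3 2} already records that $\mathcal{S}(-1)\oplus\mathcal{O}^{\oplus 2}_{\mathbb{Q}^3}$ has $c_2=\tfrac12X_1^2\neq X_1^2$. So this case is excluded as well.

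\textbf{Remaining cases.} The two surviving bundles do have the prescribed Chern classes. For $\mathcal{L}^{\perp}$ this is Proposition \ref{prop1}(1) with $n=3$. For $\mathcal{L}_{\mathrm{ker}}\oplus\mathcal{O}_{\mathbb{Q}^3}$ it follows from Proposition \ref{prop2}(3), since the trivial summand does not change the total Chern class. This gives the corollary.

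There is no real obstacle here. The only step needing care is confirming that the Chern classes of the excluded bundles really differ, and that is immediate from Whitney's formula.
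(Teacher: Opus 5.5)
Your proposal is correct and follows essentially the paper's approach, which presents this corollary as an immediate consequence of Theorem \ref{uniform Q3 2}: the split bundle $\mathcal{O}^{\oplus 3}_{\mathbb{Q}^3}\oplus\mathcal{O}_{\mathbb{Q}^3}(-1)$ and $\mathcal{S}(-1)\oplus\mathcal{O}^{\oplus 2}_{\mathbb{Q}^3}$ are ruled out by their second Chern classes, $0$ and $\tfrac12 X_1^2$. Your check that $\mathcal{L}^{\perp}$ and $\mathcal{L}_{\mathrm{ker}}\oplus\mathcal{O}_{\mathbb{Q}^3}$ have the prescribed Chern classes only shows that both conclusions occur; the corollary does not need it.
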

	\subsection{Uniform $5$-bundles on $\mathbb{Q}^5$}
	Throughout this section, let $E$ be a uniform bundle on $\mathbb{Q}^5$ of rank $5$ with splitting type $(a_1,\ldots,a_5)$. By Lemma \ref{lemma1}, if there is an integer $i~(1\le i\le 4)$ such that $a_i-a_{i+1}\geq 2$, then $E$ has a uniform subbundle of rank at most $4$. 
	According to \cite[Theorem 4.1]{KS} and \cite[Theorem 4.3]{FLL}, such subbundles either split, or are $\mathcal{S}(a)~(a\in \mathbb{Z})$. Hence $E$ either splits or is of the form $\mathcal{S}(a)\oplus \mathcal{O}_{\mathbb{Q}^{5}}(b)$ with $a,b\in\mathbb{Z}$. Thus we may assume $a_i-a_{i+1}\le 1$ for every $1\le i\le 4$. Moreover, by \cite[Proposition 1.2]{AW} and \cite[Proposition 3.5]{FLL}, 
it suffices to consider the following seven splitting types 
\[
(0,0,0,-1,-1),~(1,1,0,0,-1),~(1,0,0,0,-1),~(0,0,0,0,-1),~(2,1,0,0,0),~(2,1,0,0,-1),~(2,2,1,0,0).
\]
up to dual and tensoring by a line bundle. We then discuss each case separately. For later use, we recall the standard diagram and the corresponding Chow rings (cf. \cite[Lemma 2.1]{FLL}). 
		\begin{align*}
		\xymatrix{
			\mathcal{U}=B_3/P_{1,2}\ar[d]^{p}   \ar[r]^-{q} & \mathcal{M}=B_3/P_2\\
			X=\mathbb{Q}^5.
		}
	\end{align*}
	
	 %\begin{equation*}
		\begin{align*}
			&H^\bullet(X,\mathbb{Q})=\mathbb{Q}[X_1]/(X_1^6),~H^{\bullet}(\mathcal{U},\mathbb{Q})=\mathbb{Q}[X_1,X_2]/I,\\
			&H^{\bullet}(\mathcal{M},\mathbb{Q})=\mathbb{Q}[X_1+X_2,X_1X_2]/I,~I=(\Sigma_d(X_1^2,X_2^2)_{2\leq d\leq 3}).
		\end{align*}
	%\end{equation*}
The fiber of $q$ is isomorphic to a line in $X$ and the fiber of $p$ is isomorphic to $\mathbb{Q}^3$. For $1\le t\le 5$, we denote the $t$-th Chern class $c_t(p^*E)$ by $\mu_tX_1^t \pmod{I}$ for some $\mu_t\in \mathbb{Q}$. In the proofs below, all Chern class computations are carried out via Whitney's formula, and the resulting polynomial systems are solved using Mathematica (routine algebraic details omitted).
	
\begin{proposition}\label{prop3-2}
	Let $E$ be a uniform $5$-bundle on $\mathbb{Q}^5$ of splitting type $(0,0,0,-1,-1)$. Then $E$ either splits or is isomorphic to $\mathcal{S}(-1)\oplus \oh_{\mathbb{Q}^5}$.
\end{proposition}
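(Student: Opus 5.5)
The plan is to imitate Case I of Theorem \ref{uniform Q3}, now on the diagram $\mathcal{U}=B_3/P_{1,2}\to\mathcal{M}=B_3/P_2$ over $X=\mathbb{Q}^5$. The splitting type $(0,0,0,-1,-1)$ has a single jump, so the relative Harder--Narasimhan filtration of $p^*E$ along the $q$-fibres gives
\[
0\rightarrow q^*G\rightarrow p^*E\rightarrow q^*H\otimes p^*\oh_X(-1)\rightarrow 0,
\]
where $G$ has rank $3$ and $H$ has rank $2$ on $\mathcal{M}$.

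Write $c_1(q^*G)=a(X_1+X_2)$ and $c_1(q^*H\otimes p^*\oh_X(-1))=-a(X_1+X_2)-2X_1$. The higher Chern classes of $G$ and $H$ are written in the ring generated by $X_1+X_2$ and $X_1X_2$ modulo $I$. Whitney's formula expresses $c_t(p^*E)$ for $1\le t\le 5$. Every such class must equal $\mu_tX_1^t \pmod I$, because it is pulled back from $X$. Comparing coefficients of the monomials not lying in $\mathbb{Q}[X_1]$ modulo $I$ gives a polynomial system in $a$ and the remaining Chern numbers. I expect, as in Case I, that it has only a few solutions. Two of them are certain: $a=0$, giving the split bundle, and the solution realised by $\mathcal{S}(-1)\oplus\oh$ (for instance $H$ pulled back from the natural rank-two bundle $p_*$-correspondent of the spinor bundle). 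Possibly there is one more solution that must be excluded.

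Next I treat each solution by restricting to the $p$-fibres, which are copies of $\mathbb{Q}^3$, and using base change.

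\textbf{Case $a=0$.} Both pieces are trivial on the $p$-fibres, by \cite[Lemma 3.1]{FLL} applied on the $\mathbb{Q}^3$ fibres. So $p_*q^*G$ is uniform of type $(0,0,0)$ and $p_*(q^*H\otimes p^*\oh_X(-1))$ is uniform of type $(-1,-1)$. Both split, hence $E$ is an extension of split bundles and splits.

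\textbf{Spinor case.} The restriction of $q^*H$ to a $p$-fibre $\mathbb{Q}^3$ should be a spinor bundle of $\mathbb{Q}^3$ (up to twist), and $q^*G$ restricted there should have vanishing $R^1$. Pushing forward, $E$ becomes an extension of $\mathcal{S}(-1)$ by a uniform rank $3$ bundle of type $(0,0,0)$, which is trivial. Since $\mathcal{S}$ is ACM, I would rather get $E\cong \mathcal{S}(-1)\oplus\oh^{\oplus 3}$; but that has rank $6$. So the ranks must be re-matched: on $\mathbb{Q}^5$ the spinor bundle has rank $4$ and type $(1,0,0,0)$ up to twist, so $\mathcal{S}(-1)$ has type $(0,-1,-1,-1)$, which is wrong. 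I therefore expect the relevant piece to be the dual spinor bundle $\mathcal{S}^\vee$ of type $(0,0,0,-1)$, with the quotient of rank $1$ in the $\mathcal{S}$-summand. The bookkeeping of which piece restricts to a spinor bundle on the fibre must come out of the Chern computation. The conclusion is then a direct sum by ACM vanishing, $\mathrm{Ext}^1=H^1(\mathcal{S}^{(\vee)}(k))=0$, matching the statement.

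\textbf{Extra solutions.} For any further solution, I expect the fibrewise splitting to force either a contradiction with the Chern classes of $E$, as in Claim \ref{claim:1}, or a jumping behaviour of $q^*G$ or $q^*H$ on $p$-fibres. The latter I would rule out by pushing forward and comparing $c_1$ together with stability of the homogeneous target, as in Case I.

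The main obstacle is solving the Chern system on $\mathcal{U}$ and identifying the restrictions of $G$ and $H$ to the $\mathbb{Q}^3$ fibres. Since the $p$-fibres are no longer $\mathbb{P}^1$, base change needs the cohomology of those restrictions on $\mathbb{Q}^3$. I would handle this by restricting the filtration to a $p$-fibre, where it becomes a filtration of a trivial bundle, and applying the rank $2$ and $3$ uniform classification on $\mathbb{Q}^3$ (Theorem \ref{uniform Q3} and Fritzsche).
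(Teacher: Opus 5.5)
Your framework is the paper's: the relative H-N filtration $0\to q^*G\to p^*E\to q^*H\otimes p^*\oh_X(-1)\to 0$, the Chern system on $\mathcal{U}$, restriction to the $\mathbb{Q}^3$-fibres, and ACM vanishing. Your case $a=0$ is exactly the paper's argument. The spinor case, however, which is the substance of the proposition, fails as written.

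\textbf{The rank bookkeeping is wrong.} You conflate the rank of a piece on $\mathcal{U}$ with the rank of its direct image, which is the fibre $h^0$. For $a=-1$ the fibre restrictions are $q^*G|_{p^{-1}(x)}\cong E_{\lambda_3}(-1)\oplus\oh_{\mathbb{Q}^3}$ and $q^*H|_{p^{-1}(x)}\cong E_{\lambda_3}$, the spinor bundle of the fibre. The first has $h^0=1$, $h^1=0$ and the second has $h^0=4$. So base change gives $p_*q^*G\cong\oh_X(s)$, $R^1p_*q^*G=0$, and a rank-$4$ quotient:
$0\to\oh_X(s)\to E\to p_*\bigl(q^*H\otimes p^*\oh_X(-1)\bigr)\to 0$.
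There is no rank-$6$ problem. Your ``re-matching'' also rests on a false fact. By \cite[Corollary 1.6]{ott1} the spinor bundle on $\mathbb{Q}^5$ has splitting type $(1,1,0,0)$. Hence $\mathcal{S}(-1)\oplus\oh_{\mathbb{Q}^5}$ already has type $(0,0,0,-1,-1)$, with $\oh$ as the sub and $\mathcal{S}(-1)$ as the quotient. Likewise $\mathcal{S}^\vee$ has type $(0,0,-1,-1)$, not $(0,0,0,-1)$.

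\textbf{Two steps are missing.}

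First, to apply \cite{fri} and Theorem \ref{uniform Q3} on a fibre, you need the restricted pieces to be uniform and to have the right Chern classes.
\begin{itemize}
\item Uniformity holds because on $p^{-1}(x)$ the filtration reads $0\to q^*G|_{p^{-1}(x)}\to\oh^{\oplus 5}\to q^*H|_{p^{-1}(x)}\to 0$ with $\det\bigl(q^*H|_{p^{-1}(x)}\bigr)\cong\oh_{\mathbb{Q}^3}(1)$. The paper phrases this via $\psi_x\colon p^{-1}(x)\to Gr(3,5)$ with $\psi_x^*\oh(1)\cong\oh(1)$.
\item The full solution of the Chern system, not just $a=-1$, is needed. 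It fixes the second Chern classes, and this is what rules out the split and $\mathcal{L}_{\mathrm{ker}}$ alternatives for the rank-$3$ piece. Those alternatives would change $h^0$ and $h^1$.
\end{itemize}

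Second, a fibrewise isomorphism $q^*H|_{p^{-1}(x)}\cong E_{\lambda_3}$ does not by itself identify the direct image.
\begin{itemize}
\item The paper uses that $E_{\lambda_3}=(\pi_*L_{\lambda_3})|_{p^{-1}(x)}$ is simple, where $\pi\colon B_3/P_{1,2,3}\to\mathcal{U}$.
\item Then \cite[Lemma 4.1]{FLL} gives $q^*H\otimes p^*\oh_X(-1)\cong\pi_*L_{\lambda_3}\otimes p^*\oh_X(t)$.
\item Finally $p_*\pi_*L_{\lambda_3}\cong\mathcal{S}$ by \cite[Proposition 10.13]{ott}.
\end{itemize}
Only after this does ACM vanishing split the sequence, and the splitting type forces $t=-1$, $s=0$.

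Your paragraph on ``extra solutions'' is not an argument. The paper's computation shows $a\in\{0,-1\}$ only, so no third case arises.
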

\begin{proof}
Let $0\rightarrow E_1 \rightarrow p^*E \rightarrow E_2 \rightarrow 0$ be the exact sequence induced by the relative H-N filtration. There are bundles $G_1$ of rank $3$ and $G_2$ of rank $2$ on $\mathcal{M}$ satisfying $E_1=q^*G_1$ and $E_2=q^*G_2\otimes p^*\oh_X(-1)$. We write the Chern classes as in the following equations
\begin{equation*}
	\begin{aligned}
		&c_1(E_1)=a_1(X_1+X_2),~c_2(E_1)=a_2X_1X_2+a_2'(X_1^2+X_2^2),\\
		&c_3(E_1)=a_3X_1X_2(X_1+X_2)+a_3'(X_1^3+X_2^3);\\
		&c_1(E_2)=b_1(X_1+X_2)-2X_1,~c_2(E_2)=b_2X_1X_2+b_2'(X_1^2+X_2^2)-b_1(X_1+X_2)X_1+X_1^2.
	\end{aligned}
\end{equation*}
Note that $\mathrm{Pic}(\mathcal{M})=\mathbb{Z}.\oh_{\mathcal{M}}(1)$ and $c_1(\oh_{\mathcal{M}}(1))=X_1+X_2$, hence $a_1,b_1\in\mathbb Z$. From $c_1(p^*E)=-2X_1$, we have $b_1=-a_1$. Furthermore, additional equations involving $a_i,b_i,a_i',b_i'$ can be deduced from the constraint $c_t(p^*E)\equiv\mu_tX_1^t\pmod{I}$. A direct calculation shows that $a_1\in \{0,-1\}$.

If $a_1$ is $0$, then $b_1=0$. Thus $c_1(E_2|_{p^{-1}(x)})=0$ for any $x\in X$. By \cite[Lemma 3,1]{FLL}, $E_2|_{p^{-1}(x)}$ is trivial, which implies that $E_1|_{p^{-1}(x)}$ is also trivial. Arguing as in Theorem \ref{uniform Q3}, $p_*E_1$ and $p_*E_2$ are uniform bundles of splitting types $(0,0,0)$ and $(-1,-1)$ respectively, hence they split. Consequently, $E$ splits. \\

If $a_1$ is $-1$, then $b_1=1$ and it follows that
$\det(E_2|_{p^{-1}(x)})$ is isomorphic to $\oh_{\mathbb{Q}^3}(1)$ for each $x\in X$. In fact, combining the relations among $a_i,b_i,a_i',b_i'$, we further deduce  $a_2=b_2=a_2'=b_2'=\frac{1}{2}$ and $a_3=a_3'=0$. On the other hand, by the universal property of Grassmannians, for any $x\in X$,
there is a morphism $\psi_x\colon p^{-1}(x)(\cong \mathbb{Q}^3)\rightarrow Gr(3,5)$ induced by the relative H-N filtration such that $E_1|_{p^{-1}(x)}\cong \psi^*U$ and $E_2|_{p^{-1}(x)}\cong \psi_x^*Q$, where $U$ and $Q$ are the universal subbundle and quotient bundle on $Gr(3,5)$. Since $\det(E_2|_{p^{-1}(x)})\cong\oh_{\mathbb{Q}^3}(1)$, we have 
$\psi_x^*\oh_{Gr(3,5)}(1)\cong \psi_x^*(\det Q)\cong \det(E_2|_{p^{-1}(x)})\cong \oh_{\mathbb{Q}^3}(1)$. So $\psi_x$ maps lines on $\mathbb{Q}^3$ isomorphically to lines on $Gr(3,5)$. Therefore, as the pull-back of uniform bundles on $Gr(3,5)$, $E_1|_{p^{-1}(x)}$ and $E_2|_{p^{-1}(x)}$ are uniform bundles on $\mathbb{Q}^3$. %Furthermore, the relations $a_2=b_2=a_2'=b_2'=\frac{1}{2}$ and $a_3=a_3'=0$ imply $c_1(E_1|_{p^{-1}(x)})=-X_1$
Note that the Chern classes of $E_1|_{p^{-1}(x)}$ (resp. $E_2|_{p^{-1}(x)}$) are the same as those of $E_{\lambda_3}(-1)\oplus\oh_{\mathbb{Q}^3}$ (resp. $E_{\lambda_3}$), where $E_{\lambda_3}$ is the irreducible homogeneous bundle with highest weight $\lambda_3$ on $\mathbb{Q}^3$. Consequently, for every $x\in X$, $E_1|_{p^{-1}(x)}\cong E_{\lambda_3}(-1)\oplus\oh_{\mathbb{Q}^3}$ and $E_2|_{p^{-1}(x)}\cong E_{\lambda_3}$ by \cite[Theorem 3.1]{fri} and Theorem \ref{uniform Q3}. \\

Let $\pi\colon B_3/P_{1,2,3}\rightarrow \mathcal{U}(=B_3/P_{1,2})$ denote the natural projection. Then $\pi_{*}L_{\lambda_3}$ is an irreducible homogeneous $2$-bundle, where $L_{\lambda_3}$ is the line bundle on $B_3/P_{1,2,3}$ with weight $\lambda_3$. For each $x\in X$, we have $(\pi_{*}L_{\lambda_3})|_{p^{-1}(x)}\cong E_{\lambda_3}\cong E_2|_{p^{-1}(x)}$ and $E_{\lambda_3}$ is simple.
By \cite[Lemma 4.1]{FLL}, this yields an isomorphism
$E_2\cong \pi_{*}L_{\lambda_3}\otimes p^*\oh_X(t)$ for some integer $t$. According to \cite[Proposition 10.13]{ott}, the sheaf $p_*(\pi_{*}L_{\lambda_3})$ is isomorphic to 
the spinor bundle $\mathcal{S}$ on $X$. Hence $p_{*}E_2\cong\mathcal{S}(t)$.

%the sheaf $p_*(\pi_{*}L_{\lambda_3})$ is the homogeneous bundle $E_{\lambda_3}(-1)$. Here we use identical symbols without confusion. Indeed, $E_{\lambda_3}$ is the homogeneous bundle $\mathcal{S}(-1)$ on $\mathbb{Q}^5$. 
From $h^0(\mathbb{Q}^3,E_{\lambda_3}(-1)\oplus\oh_{\mathbb{Q}^3})=1$, $h^1(\mathbb{Q}^3,E_{\lambda_3}(-1)\oplus\oh_{\mathbb{Q}^3})=0$, we deduce that $p_{*}E_1$ is a line bundle $\oh_{X}(s)$ and $R^1p_{*}E_1=0$. By projection the sequence $0\rightarrow E_1 \rightarrow p^*E \rightarrow E_2 \rightarrow 0$ onto $X$, we obtain an exact sequence 
\[
0\rightarrow \oh_{X}(s) \rightarrow E \rightarrow \mathcal{S}(t) \rightarrow 0.
\]
 Since $\mathcal{S}$ is ACM, $\mathrm{Ext}^1(\mathcal{S}(t),\oh_{X}(s))$ vanishes, thus we have $E\cong \mathcal{S}(t)\oplus \oh_{X}(s)$. The splitting type of $E$ then forces $t=-1$ and $s=0$. Then we are done.

 %Because $c_1(p_{*}E_1)=c_1(E)-c_1(\mathcal{S}(-1))=0$,  $p_{*}E_1$ is a trivial bundle. As the group $\mathrm{Ext}^1(\mathcal{S}(-1),\oh_{\mathbb{Q}^5})$ vanishes, we immediately obtain $E\cong \mathcal{S}(-1)\oplus \oh_{\mathbb{Q}^5}$.
\end{proof}	
	
	\begin{proposition}\label{prop3-3}
		Let $E$ be a uniform $5$-bundle on $\mathbb{Q}^5$ of splitting type $(1,1,0,0,-1)$. Then $E$ either splits or is isomorphic to $\mathcal{S}\oplus \oh_{\mathbb{Q}^5}(-1)$.
	\end{proposition}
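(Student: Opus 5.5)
We follow the scheme of Proposition \ref{prop3-2}. The relative H-N filtration of $p^*E$ for splitting type $(1,1,0,0,-1)$ has three steps:
\[
0\subset E_1\subset E_2\subset p^*E,
\]
with
\[
E_1=q^*G_1\otimes p^*\oh_X(1),\qquad E_2/E_1=q^*G_2,\qquad p^*E/E_2=q^*L\otimes p^*\oh_X(-1).
\]
Here $G_1,G_2$ are rank $2$ bundles on $\mathcal{M}=B_3/P_2$ and $L$ is a line bundle. We write $c_1(q^*G_1)=a_1(X_1+X_2)$ and $c_1(q^*L)=b_1(X_1+X_2)$ with $a_1,b_1\in\mathbb{Z}$, together with undetermined coefficients for the second Chern classes as in Proposition \ref{prop3-2}. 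Imposing $c_t(p^*E)\equiv\mu_tX_1^t\pmod I$ via Whitney's formula gives a polynomial system. Solving it should leave only a few possibilities, presumably $(a_1,b_1)\in\{(0,0),(-1,0),(0,1)\}$ and possibly $(-1,1)$, in analogy with Case II of Theorem \ref{uniform Q3}.

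\emph{Case $a_1=b_1=0$.} Each graded piece restricted to a $p$-fiber $\mathbb{Q}^3$ has trivial determinant. By \cite[Lemma 3.1]{FLL} it is trivial, so $E_2$ restricts trivially to $p$-fibers. Then $p_*E_2$ is a uniform $4$-bundle of type $(1,1,0,0)$. By \cite[Theorem 4.1]{KS} and \cite[Theorem 4.3]{FLL} it splits or equals $\mathcal{S}\oplus\oh$ up to twist; in fact the type forces it to split, or to be $\mathcal{S}(1)$ if that has type $(1,1,0,0)$. Pushing forward gives
\[
0\to p_*E_2\to E\to \oh_X(-1)\to 0 .
\]
This extension splits because $p_*E_2$ is ACM, so $E$ splits or $E\cong\mathcal{S}(1)\oplus\oh_X(-1)$.

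\emph{Case $b_1=1$ (quotient $q^*\oh_{\mathcal{M}}(1)\otimes p^*\oh_X(-1)$).} On a $p$-fiber the quotient has $c_1=X_1$. Using the morphism $\psi_x\colon p^{-1}(x)\to Gr(4,5)$, we get that $E_2|_{p^{-1}(x)}$ is uniform on $\mathbb{Q}^3$, and the identification of Proposition \ref{prop3-2} via \cite[Lemma 4.1]{FLL} gives
\[
p^*E/E_2\cong\pi_*L_{\lambda_3}\otimes p^*\oh_X(t).
\]
Hence $p_*$ of the quotient is $\mathcal{S}(t)$, the kernel pushes forward to a line bundle, and ACM-ness of $\mathcal{S}$ splits the sequence. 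The rank count must be checked here; if $L$ is really a line bundle, this case instead produces a contradiction via Chern classes.

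\emph{Case $a_1=-1$.} Symmetrically, the first piece $E_1$ restricts to $p$-fibers as $E_{\lambda_3}$ up to twist. We obtain $p_*E_1\cong\mathcal{S}(s)$, and the quotient $p^*E/E_1$ restricts trivially up to twist on fibers, so its pushforward is a uniform rank $3$ bundle of type $(0,0,-1)$, hence split. The extension
\[
0\to\mathcal{S}(s)\to E\to \oh\oplus\oh\oplus\oh(-1)\to 0
\]
splits by ACM-ness, giving $E\cong\mathcal{S}\oplus\oh_X\oplus\oh_X(-1)$ or its appropriate twist. Comparing with the splitting type, the candidate is $\mathcal{S}\oplus\oh_{\mathbb{Q}^5}(-1)$; this is rank $5$ exactly when the split piece is one line bundle. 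So we will track ranks carefully, and the intended outcome is $\mathcal{S}\oplus\oh(-1)$ with $\mathcal{S}$ of rank $4$.

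\emph{Main obstacle.} The hardest step is excluding any mixed solution such as $(a_1,b_1)=(-1,1)$. We would first try Chern-class incompatibility from the Mathematica system. Failing that, we would mimic the $T_{\mathbb{Q}^3}$ argument: analyse $\mathrm{Ext}^1$ between the pieces on $\mathcal{U}$ and show that $p_*$ and $R^1p_*$ force $E$ to be homogeneous, for instance $p_*q^*$ of a rank-one homogeneous piece. Then compare its splitting type with $(1,1,0,0,-1)$ to reach a contradiction.
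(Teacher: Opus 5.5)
There is a genuine gap, located exactly at what you call the main obstacle. In your notation the Chern-class system leaves $(a_1,b_1)\in\{(0,0),(-1,0),(-1,1)\}$. Your guess $(0,1)$ does not occur: on a $p$-fibre, $E_2/E_1$ would be the kernel of a surjection $\oh^{\oplus 3}\to\oh_{\mathbb{Q}^3}(1)$, and three sections of $\oh_{\mathbb{Q}^3}(1)$ always have a common zero. But $(-1,1)$ genuinely survives, so no Chern-class incompatibility will remove it. Your fallback (an $\mathrm{Ext}^1$ computation on $\mathcal{U}$ forcing $E$ to be homogeneous) is not yet an argument.

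In this case $E_3:=p^*E/E_2=q^*\oh_{\mathcal{M}}(1)\otimes p^*\oh_X(-1)$ restricts to $\oh_{\mathbb{Q}^3}(1)$ on every $p$-fibre, and $p_*E_3\cong T_X(-1)$. The missing ingredient is the fibrewise identification of $E_2$. The bundle $E_2|_{p^{-1}(x)}$ is a uniform $4$-bundle on $\mathbb{Q}^3$ of type $(0,0,0,-1)$ with the Chern classes of $\mathcal{L}^{\perp}$. By Corollary \ref{uniform Q3 3} it is therefore $\mathcal{L}^{\perp}$ or $\mathcal{L}_{\mathrm{ker}}\oplus\oh_{\mathbb{Q}^3}$, and which one occurs may jump with $x$. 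A global Ext computation as in Case II of Theorem \ref{uniform Q3} cannot see this jumping.

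One then applies $R^ip_*$ to $0\to E_2\to p^*E\to E_3\to 0$, using Propositions \ref{prop1} and \ref{prop2}. There are three subcases:
\begin{enumerate}
\item If every fibre gives $\mathcal{L}^{\perp}$, then $p_*E_2=R^1p_*E_2=0$, so $E\cong T_X(-1)$, which has the wrong splitting type.
\item If every fibre gives $\mathcal{L}_{\mathrm{ker}}\oplus\oh$, one gets $0\to\oh_X(a)\to E\to T_X(-1)\to\oh_X(b)\to 0$ with $a=b+1$. Stability of $T_X(-1)$ gives $b\ge 1$, so $\oh_X(a)\subset E$ with $a\ge 2$, contradicting the splitting type.
\item In the mixed case, $p_*E_2$ is torsion free and vanishes generically, hence is zero. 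So $E\hookrightarrow T_X(-1)$ with torsion cokernel, which is impossible since $c_1(E)=X_1>0=c_1(T_X(-1))$.
\end{enumerate}
Your ``Case $b_1=1$'' cannot substitute for this. The quotient there is a line bundle, so there is no identification with $\pi_*L_{\lambda_3}$, as you suspected.

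Your treatment of $(a_1,b_1)=(-1,0)$ is also wrong. There $E_1|_{p^{-1}(x)}$ is a rank-two subbundle of a trivial bundle with $c_1=-1$. Its sections are constant sections of $p^*E|_{p^{-1}(x)}\cong\oh^{\oplus 5}$, and two independent ones would make it trivial. Hence $p_*E_1$ has rank at most one and cannot be $\mathcal{S}(s)$; this is why your sequence has ranks $4+3\neq 5$.

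What you should use instead is that $E_3|_{p^{-1}(x)}$ is trivial. Then $E_2|_{p^{-1}(x)}$, being the kernel of a surjection of trivial bundles, is trivial, and this case merges with $(0,0)$. So $p_*E_2$ is a uniform $4$-bundle of type $(1,1,0,0)$, hence split or $\mathcal{S}$ by \cite[Theorem 4.3]{FLL}. The sequence $0\to p_*E_2\to E\to\oh_X(-1)\to 0$ then splits because $p_*E_2$ is ACM.

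Finally, with the conventions used here, $\mathcal{S}$ on $\mathbb{Q}^5$ has rank $4$ and splitting type $(1,1,0,0)$. So the outcome is $\mathcal{S}\oplus\oh_{\mathbb{Q}^5}(-1)$, not $\mathcal{S}(1)\oplus\oh_X(-1)$. Also, ``$\mathcal{S}\oplus\oh$ up to twist'' has rank $5$ and is never a candidate for the rank-$4$ bundle $p_*E_2$.
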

	\begin{proof}
		Let
		\begin{align}
			&0\rightarrow E_1(=q^*G_1\otimes p^*\oh_{X}(1))\rightarrow  E_2\rightarrow E_2/E_1(=q^*{G_2})\rightarrow 0\notag,\\
			&0\rightarrow E_2\rightarrow  p^*E\rightarrow E_3(=q^*{G_3}\otimes p^*\oh_{X}(-1))\rightarrow 0\label{exact4}, 
		\end{align} 
be the exact sequences induced by the relative H-N filtration,		
where every $G_1$, $G_2$ are rank $2$ bundles and $G_3$ is a line bundle. We denote the Chern classes by
	\begin{equation*}
		\begin{aligned}
			&c_1(E_1)=a_1(X_1+X_2)+2X_1,~c_2(E_1)=b_1(X_1^2+X_2^2)+b_2X_1X_2+a_1X_1(X_1+X_2)+X_1^2,\\
			&c_1(E_2/E_1)=a_2(X_1+X_2),~c_2(E_2/E_1)=b_1'(X_1^2+X_2^2)+b_2'X_1X_2.
		\end{aligned}
	\end{equation*}
	Then $c_1(E_3)=-(a_1+a_2)(X_1+X_2)-X_1$. A direct computation with the condition $c_t(p^*E)\equiv\mu_tX_1^t \pmod{I}$ gives the admissible pairs $(a_1,a_2)$ are $(0,0)$, $(-1,1)$, or $(-1,0)$.\\
	
	If $(a_1,a_2)$ is $(0,0)$ or $(-1,1)$, then $a_1+a_2=0$ and hence $G_3=\oh_{\mathcal{M}}$. It follows that $E_3|_{p^{-1}(x)}$
	and $E_2|_{p^{-1}(x)}$ are trivial. Then $p_{*}E_3$ is $\oh_X(-1)$ and $p_{*}E_2$ is a uniform bundle of splitting type $(1,1,0,0)$. By \cite[Theorem 4.3]{FLL}, $p_{*}E_2$ either splits or is the spinor bundle $\mathcal{S}$. Since $E$ is an extension of $p_{*}E_2$ and $p_{*}E_3$, $E$ either splits or is $\mathcal{S}\oplus \oh_{\mathbb{Q}^5}(-1)$.\\
	
	If $(a_1,a_2)$ is $(-1,0)$, then $G_3=\oh_{\mathcal{M}}(1)$. It follows that
$E_3|_{p^{-1}(x)}\cong \oh_{\mathbb{Q}^3}(1)$ and hence $E_2|_{p^{-1}(x)}$ is a uniform bundle with splitting type $(0,0,0,-1)$ on $\mathbb{Q}^3$. Note that the Chern classes of $E_2|_{p^{-1}(x)}$ are the same as those of $\mathcal{L}^{\perp}$. By Corollary \ref{uniform Q3 3}, $E_2|_{p^{-1}(x)}$ is isomorphic to either $\mathcal{L}^{\perp}$ or 
$\mathcal{L}_{\mathrm{ker}}\oplus \mathcal{O}_{\mathbb{Q}^{3}}$. Next, we elaborate three subcases to prove that this scenario cannot happen.

\textbf{Case a.} For any $x\in X$, $E_2|_{p^{-1}(x)}$ is isomorphic to  $\mathcal{L}^{\perp}$. 

By Proposition \ref{prop1}, $h^0(\mathbb{Q}^{3},\mathcal{L}^{\perp})=h^1(\mathbb{Q}^{3},\mathcal{L}^{\perp})=0$, %(see Proposition \ref{prop1}), 
so the direct images $p_{*}E_2$ and $R^1p_{*}E_2$ vanish. By performing $R^ip_*$ to the exact sequence (\ref{exact4}), we would have $E\cong p_{*}E_3\cong T_X(-1)$, contrary to the splitting type of $E$.

\textbf{Case b.} For any $x\in X$, $E_2|_{p^{-1}(x)}$ is isomorphic to  $\mathcal{L}_{\mathrm{ker}}\oplus \mathcal{O}_{\mathbb{Q}^{3}}$. 

By Proposition \ref{prop2}, $h^0(\mathbb{Q}^{3},\mathcal{L}_{\mathrm{ker}}\oplus \mathcal{O}_{\mathbb{Q}^{3}})=h^1(\mathbb{Q}^{3},\mathcal{L}_{\mathrm{ker}}\oplus \mathcal{O}_{\mathbb{Q}^{3}})=1$, %(see Proposition \ref{prop2}),
 so $p_{*}E_2$ and $R^1p_{*}E_2$ are line bundles by the base change theorem. Suppose that $p_{*}E_2=\oh_X(a)$ and $R^1p_{*}E_2=\oh_X(b)$.
By performing $R^ip_*$ to the exact sequence (\ref{exact4}), we would have an exact sequence 
\[
0\rightarrow \oh_X(a) \rightarrow E \rightarrow T_X(-1)\rightarrow\oh_X(b)\rightarrow 0. 
\]
Since $c_1(E)=X_1$ and $c_1(T_X(-1))=0$, we would have $a=b+1$. Note that $T_X(-1)$ is stable, which forces $b\ge 1$ and hence $a\ge 2$. This immediately contradicts the splitting type of $E$.

 \textbf{Case c.} There exists a point $x$ such that $E_2|_{p^{-1}(x)}\cong \mathcal{L}^{\perp}$ and a point $y$ such that $E_2|_{p^{-1}(y)}\cong \mathcal{L}_{\mathrm{ker}}\oplus \mathcal{O}_{\mathbb{Q}^{3}}$.
 
 Let $S_E:=\{y\in X\mid E_2|_{p^{-1}(y)}\cong\mathcal{L}_{\mathrm{ker}}\oplus \mathcal{O}_{\mathbb{Q}^{3}}\}$. Then the set $S_E$ is closed in $X$ by the semicontinuity theorem. 
 Since $p_*{E_2}$ is torsion free and zero outside of $S_E$, we conclude that $p_*{E_2}$ vanishes, which yields the following short exact sequence:
 \[
 	0\rightarrow E \rightarrow T_X(-1) \rightarrow R^1p_*{E_2}\rightarrow 0,
 \]
 where $R^1p_*{E_2}$ is zero or a torsion sheaf. Since $c_1(E)=X_1$ and $c_1(T_X(-1))=0$, we would have $c_1(R^1p_*{E_2})=-X_1$, which is absurd.
	\end{proof}	
		\begin{proposition}\label{prop3-4}
		Let $E$ be a uniform $5$-bundle on $\mathbb{Q}^5$ of splitting type $(1,0,0,0,-1)$. Then $E$ either splits or is isomorphic to the bundle $T_{\mathbb{Q}^5}(-1)$.
	\end{proposition}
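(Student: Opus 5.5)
We follow the scheme of Theorem \ref{uniform Q3} (Case II) and Proposition \ref{prop3-3}. The relative H-N filtration of $p^*E$ for splitting type $(1,0,0,0,-1)$ gives
\begin{align*}
&0\rightarrow E_1(=q^*G_1\otimes p^*\oh_X(1))\rightarrow E_2\rightarrow E_2/E_1(=q^*G_2)\rightarrow 0,\\
&0\rightarrow E_2\rightarrow p^*E\rightarrow E_3(=q^*G_3\otimes p^*\oh_X(-1))\rightarrow 0,
\end{align*}
where $G_1,G_3$ are line bundles on $\mathcal{M}=B_3/P_2$ and $G_2$ has rank $3$. We write $c_1(E_1)=a_1(X_1+X_2)+X_1$ and $c_1(E_3)=a_2(X_1+X_2)-X_1$, with $c_1(E_2/E_1)=-(a_1+a_2)(X_1+X_2)$, and introduce unknown coefficients for $c_2,c_3$ of $G_2$. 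Imposing $c_t(p^*E)\equiv\mu_tX_1^t \pmod I$ and solving the polynomial system, we expect the admissible pairs to be $(a_1,a_2)\in\{(0,0),(-1,0),(0,1),(-1,1)\}$. The remaining steps treat these cases.

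If $a_2=0$, then $E_3|_{p^{-1}(x)}$ is trivial, so $p_*E_3=\oh_X(-1)$. Moreover $E_2|_{p^{-1}(x)}$ has $c_1=0$ and is trivial by \cite[Lemma 3.1]{FLL}, so $p_*E_2$ is a uniform $4$-bundle of type $(1,0,0,0)$. By \cite[Theorem 4.1]{KS} and \cite[Theorem 4.3]{FLL} it splits; a spinor twist would have the wrong type. Since $H^1$ of line bundles vanishes, $E$ splits. The case $(a_1,a_2)=(0,1)$ is the dual of $(-1,0)$, after replacing $E$ by $E^\vee$, which has the same splitting type. It is therefore ruled out or reduced by the same argument as $a_2=0$ applied to $E^\vee$; equivalently, $E_1$ restricted to $q$-fibres is handled dually.

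The main case is $(a_1,a_2)=(-1,1)$. Here $G_3=\oh_{\mathcal{M}}(1)$, $G_1=\oh_{\mathcal{M}}(-1)$ and $c_1(E_2/E_1)=0$. On each $p$-fibre $\mathbb{Q}^3$, $E_3$ restricts to $\oh_{\mathbb{Q}^3}(1)$ and $E_1$ to $\oh_{\mathbb{Q}^3}(-1)$. The bundle $E_2|_{p^{-1}(x)}$ has rank $4$, and, via the Grassmannian map argument of Proposition \ref{prop3-2}, its quotient $G_2$-part is a uniform $3$-bundle with $c_1=0$ on $\mathbb{Q}^3$, hence trivial. So $E_2|_{p^{-1}(x)}$ is an extension of $\oh^{3}$ by $\oh(-1)$, and it is split because $H^1(\oh_{\mathbb{Q}^3}(-1))=0$. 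Alternatively, twisting by $\oh(1)$ and using Theorem \ref{uniform Q3 2} together with the Chern class computation, $E_2|_{p^{-1}(x)}$ is $\mathcal{L}^{\perp}$ or is obtained from $\mathcal{L}^{\perp}$-type data. We then argue as in Cases a–c of Proposition \ref{prop3-3}, using the vanishing $H^0=H^1=0$ of $\mathcal{L}^\perp$ (Proposition \ref{prop1}). If $p_*E_2=R^1p_*E_2=0$, pushing forward gives $E\cong p_*(q^*\oh_{\mathcal{M}}(1))\otimes\oh_X(-1)$. By \cite[Proposition 10.13]{ott} this is the homogeneous bundle $E_{\lambda_2}(-1)=T_X(-1)$, and that is the claimed bundle. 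The other fibre types yield either a trivial summand, forcing $E$ to split as in Case II of Theorem \ref{uniform Q3}, or an exact sequence $0\to\oh_X(a)\to E\to T_X(-1)\to R^1\to0$. We exclude the latter by comparing first Chern classes with the stability of $T_X(-1)$.

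I expect the main obstacle to be the fibrewise identification of $E_2|_{p^{-1}(x)}$ and the resulting jump-locus analysis in the case $(-1,1)$. First I would try to show directly that $E_2$ restricted to each fibre is forced by its Chern classes, from the solved system, to be one of finitely many bundles from Theorems \ref{uniform Q3} and \ref{uniform Q3 2}. Semicontinuity together with torsion-freeness of $p_*E_2$ then eliminates the mixed cases.
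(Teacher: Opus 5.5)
Your overall architecture matches the paper's: the relative H-N filtration, the $a_2=0$ case via \cite[Theorem 4.3]{FLL}, and, in the remaining case $(a_1,a_2)=(-1,1)$ (the paper's $(-1,0)$ in its normalisation), a fibrewise identification of $E_2|_{p^{-1}(x)}$ followed by pushing forward. The extra pairs you allow are not actually admissible, but your $a_2=0$ argument and duality dispose of them harmlessly.

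However, your primary fibrewise claim is false. Since $p^*E$ is trivial on $p^{-1}(x)$, $E_1|_{p^{-1}(x)}=\oh(-1)$ and $E_3|_{p^{-1}(x)}=\oh(1)$, you get $c(E_2/E_1|_{p^{-1}(x)})=(1-h^2)^{-1}$, so $c_2=h^2\neq 0$. Hence the middle piece is not trivial, and $E_2|_{p^{-1}(x)}\not\cong\oh(-1)\oplus\oh^{\oplus 3}$. Lines inside $p^{-1}(x)$ are not $q$-fibres, so nothing forces type $(0,0,0)$ there; indeed $T_{\mathbb{Q}^3}(-1)$ is what occurs. The correct route is your ``alternative'', which needs no twist: $E_2|_{p^{-1}(x)}=\ker(\oh^{\oplus 5}\to\oh(1))$ is already uniform of type $(0,0,0,-1)$ with $c_i=(-1)^ih^i$. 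Corollary \ref{uniform Q3 3} then gives $\mathcal{L}^{\perp}$ or $\mathcal{L}_{\mathrm{ker}}\oplus\oh$.

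The genuine gap is that Cases b and c of Proposition \ref{prop3-3} do not transfer verbatim, because there $c_1(E)=X_1$ while here $c_1(E)=0$.

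\textbf{All fibres $\mathcal{L}_{\mathrm{ker}}\oplus\oh$.} You get $0\to\oh_X(a)\to E\to T_X(-1)\to\oh_X(b)\to 0$. Comparing $c_1$ gives $a=b$, stability of $T_X(-1)$ gives $b\ge 1$, and the splitting type gives $a\le 1$. So $a=b=1$, which is perfectly consistent; ``comparing first Chern classes with stability'' excludes nothing. The missing step, which is the paper's extra argument, runs as follows. The bundle $F:=\ker(T_X(-1)\to\oh_X(1))\cong E/\oh_X(1)$ is uniform of rank $4$ and type $(0,0,0,-1)$, hence splits by \cite[Theorem 4.3]{FLL}. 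This places $\oh_X$ inside $T_X(-1)$, contradicting $H^0(T_X(-1))=0$. Your vague ``trivial summand'' remark could be turned into an equivalent argument once $a=1$ is established, but as written it is not carried out.

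\textbf{Mixed case.} The mechanism is also different from Proposition \ref{prop3-3}. You get $0\to E\to T_X(-1)\to R^1p_*E_2\to 0$ with $c_1(E)=c_1(T_X(-1))$. Hence $\det E\to\det T_X(-1)$ is an isomorphism, so $E\cong T_X(-1)$; this is not a $c_1$-contradiction.
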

	\begin{proof}
	The relative H-N filtration of $p^*E$ takes the same form as in Proposition \ref{prop3-3}, while in this case $G_1$ and $G_3$ are line bundles and $G_2$ is a rank $3$ bundle. We hereby adopt the notations used therein. Suppose that 
		\begin{equation*}
		\begin{aligned}
			&c_1(E_1)=a_1(X_1+X_2)+X_1,~c_1(E_2/E_1)=a_2(X_1+X_2),\\
			&c_2(E_2/E_1)=b_1'(X_1^2+X_2^2)+b_2'X_1X_2,~c_3(E_2/E_1)=t_1(X_1^3+X_2^3)+t_2X_1X_2(X_1+X_2).
		\end{aligned}
	\end{equation*}
	Then $c_1(E_3)=-(a_1+a_2)(X_1+X_2)-X_1$. %We employ Mathematica and Whitney's formula to compute the Chern classes of $p^*E$. 
	Given that $c_t(p^*E)\equiv\mu_tX_1^t \pmod{I}$, the admissible pairs for $(a_1,a_2)$ are $(0,0)$ or $(-1,0)$.\\
	
	If $(a_1,a_2)$ is $(0,0)$, then $G_3=\oh_{\mathcal{M}}$. It follows that $E_3|_{p^{-1}(x)}$
	and $E_2|_{p^{-1}(x)}$ are trivial. Then $p_{*}E_3$ is $\oh_X(-1)$ and $p_{*}E_2$ is a uniform bundle of splitting type $(1,0,0,0)$. It follows from \cite[Theorem 4.3]{FLL} that $p_{*}E_2$ splits. Since $E$ is an extension of $p_{*}E_2$ and $p_{*}E_3$, $E$ splits as well.\\
	
	If $(a_1,a_2)$ is $(-1,0)$, we get $b_1'=1$ and $b_2'=t_1=t_2=0$ by calculation. So the Chern classes of $E$ are $c_i(E)=0$ for odd $i$ and $c_i(E)=X_1^i$ for even $i$.
	%\[c_i(E)=0~\text{for odd}~i, \quad ~\text{and}~c_i(E)=X_1^i~\text{for even}~i.\]
	 %we get $G_3=\oh_{\mathcal{M}}(1)$. It follows that$E_3|_{p^{-1}(x)}\cong \oh_{\mathbb{Q}^3}(1)$ and hence $E_2|_{p^{-1}(x)}$ is a uniform bundle with splitting type $(0,0,0,-1)$ on $\mathbb{Q}^3$. Note that the Chern classes of $E_2|_{p^{-1}(x)}$ are the same as those of $\mathcal{L}^{\perp}$. By Corollary \ref{uniform Q3 3}, $E_2|_{p^{-1}(x)}$ is isomorphic to either $\mathcal{L}^{\perp}$ or 
	Moreover, by the same argument as in the proof of Proposition \ref{prop3-3} for the case $(a_1,a_2)=(-1,0)$, we have $E_2|_{p^{-1}(x)}\cong \mathcal{L}^{\perp}$ or $\mathcal{L}_{\mathrm{ker}}\oplus \mathcal{O}_{\mathbb{Q}^{3}}$.\\

\begin{claim}\phantomsection\label{claim3-4} There necessarily exists a point $x\in X$ such that $E_2|_{p^{-1}(x)}$ is isomorphic to $\mathcal{L}^{\perp}$.
\end{claim}

Proof. Assume that for any $x\in X$, $E_2|_{p^{-1}(x)}\cong \mathcal{L}_{\mathrm{ker}}\oplus \mathcal{O}_{\mathbb{Q}^{3}}$, by the base change theorem, $p_*{E_2}$ and $R^1p_*{E_2}$ are both line bundles. Denote $p_*{E_2}= \mathcal{O}_X(a)$.
By projecting
(\ref{exact4}) to $X$, we get an exact sequence
$0\rightarrow \mathcal{O}_X(a) \rightarrow E \rightarrow T_X(-1) \rightarrow R^1p_*{E_2}\rightarrow0$. Since 
$c_1(E)=c_1(T_X(-1))=0$, we obtain $R^1p_*{E_2}\cong \mathcal{O}_X(a)$. Combining the splitting type of $E$ and the stability of $T_X(-1)$, we deduce $a=1$, hence $p_*{E_2}=R^1p_*{E_2}=\oh_X(1)$. Let $F:=\ker(T_X(-1) \rightarrow \oh_X(1))$. 
From the splitting types of $E$, one easily checks that $F$ is a uniform $4$-bundle on $X$ of splitting type $(0,0,0,-1)$. Thus $F$ splits, which in turn forces $T_X(-1)$ to split. This contradicts the stability of $T_X(-1)$, so such a case cannot occur.\\

By Claim \ref{claim3-4} and arguing similarly to the proof of Proposition \ref{prop3-3} (we retain the same notation), we obtain an exact sequence

%Assume there exists a point $x$ for which $E_2|_{p^{-1}(x)}\cong\mathcal{L}^{\perp}$, and a point $y$ for which $ \mathcal{L}_{ker}\oplus \mathcal{O}_{\mathbb{Q}^{3}}$. Arguing similarly to the proof of Proposition 1 (we retain the same notation), we obtain an exact sequence
 \[
0\rightarrow E \rightarrow T_X(-1) \rightarrow R^1p_*{E_2}\rightarrow 0,
\]
where $R^1p_*{E_2}$ is zero or a torsion sheaf. Since $E$ and $T_X(-1)$ share identical Chern classes, $R^1p_*{E_2}$ vanishes and consequently $E\cong T_X(-1)$.
	\end{proof}
	
	Next, we classify uniform $(2n-1)$-bundles with splitting type $(0,\ldots,0,-1)$ on quadrics $\mathbb{Q}^{2n-1}$ for arbitrary $n\ge 3$. We first prove a key lemma. Here 
	$X_1$ denotes the ample generator of $\mathrm{Pic}(\mathbb{Q}^{2n-1})$
	and we refer the reader to \cite[Lemma 2.1]{FLL} for the Chow rings of $X=\mathbb{Q}^{2n-1}$, $\mathcal{U}=B_n/P_{1,2}$ and $\mathcal{M}=B_n/P_{2}$.
	\begin{lemma}\label{lemma3-5}
		Let $n\ge 2$ and let $E$ be a uniform $(2n-1)$-bundle on $\mathbb{Q}^{2n-1}$ of splitting type $(0,\ldots,0,-1)$. Suppose 
		$c_1(E)=-X_1$ and $c_2(E)=X_1^2$.
		Then $E$ is isomorphic to an orthogonal kernel bundle $\mathcal{L}_{\mathrm{ker}}$.
	\end{lemma}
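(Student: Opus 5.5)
The plan is to run the argument of Case I of Theorem \ref{uniform Q3}, now on $X=\mathbb{Q}^{2n-1}$, with induction on $n$ to control the restrictions to the $p$-fibres. The base case $n=2$ is contained in the proof of Theorem \ref{uniform Q3}: the case $a=-1$ is exactly the one with $c_1=-X_1$, $c_2=X_1^2$, and it produced $\mathcal{L}_{\mathrm{ker}}$. For $n\ge 3$ we use the diagram with $\mathcal{U}=B_n/P_{1,2}$, $\mathcal{M}=B_n/P_2$, whose $p$-fibres are $\mathbb{Q}^{2n-3}$.

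\textbf{Step 1: the H-N sequence and the $p$-fibres.} The relative H-N filtration gives
\[
0\rightarrow q^*G\rightarrow p^*E\rightarrow q^*L\otimes p^*\oh_X(-1)\rightarrow 0,
\]
with $\mathrm{rk}\, G=2n-2$ and $L=\oh_{\mathcal{M}}(m)$. Using the Chow ring of \cite[Lemma 2.1]{FLL}, impose $c_t(p^*E)\equiv\mu_tX_1^t \pmod I$ together with $c_1(E)=-X_1$, $c_2(E)=X_1^2$. The case $m=0$ gives a split $E$, as in Case I, and that has the wrong $c_2$. The spinor-type solution would give $E\cong\mathcal{S}(-1)\oplus\oh^{\oplus}$, which also has the wrong $c_2$. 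So the Chern data should force the solution for which $q^*L\otimes p^*\oh_X(-1)$ restricts to $\oh_{\mathbb{Q}^{2n-3}}(1)$ on each $p$-fibre.

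Consequently $E_2:=q^*G$ restricts on every fibre $p^{-1}(x)\cong\mathbb{Q}^{2n-3}$ to a bundle with the following properties:
\begin{itemize}
\item it has rank $2n-2$ and splitting type $(0,\dots,0,-1)$;
\item it has the same Chern classes as $\mathcal{L}^{\perp}_{2n-3}$.
\end{itemize}
Uniformity on the fibre is obtained as in Proposition \ref{prop3-2}, via the classifying map to a Grassmannian. By the analogue of Theorem \ref{uniform Q3 2} and Corollary \ref{uniform Q3 3} in dimension $2n-3$, each such restriction is $\mathcal{L}^{\perp}$ or $\mathcal{L}_{\mathrm{ker}}\oplus\oh$. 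To prove this analogue, run the same H-N argument on $\mathbb{Q}^{2n-3}$; in the branch where the restriction has a trivial summand, split off that $\oh$ and apply the induction hypothesis of the lemma in dimension $2n-3$.

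\textbf{Step 2: case analysis on the fibre types, following Proposition \ref{prop3-3}.} Recall that $p_*q^*\oh_{\mathcal{M}}(1)\otimes\oh_X(-1)=T_X(-1)$.
\begin{itemize}
\item If every fibre restriction is $\mathcal{L}^{\perp}$, then $p_*E_2=R^1p_*E_2=0$, so $E\cong T_X(-1)$. This contradicts the splitting type.
\item If every fibre restriction is $\mathcal{L}_{\mathrm{ker}}\oplus\oh$, then $p_*E_2$ and $R^1p_*E_2$ are line bundles. Comparing $c_1$, the splitting type, and the stability of $T_X(-1)$ gives a contradiction, as in Claim \ref{claim:1}.
\item In the mixed case, $p_*E_2$ is torsion free and supported on a proper closed set, hence $p_*E_2=0$. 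This yields
\[
0\rightarrow E\rightarrow T_X(-1)\xrightarrow{\varphi} R^1p_*E_2\rightarrow 0,
\]
where $c_1(R^1p_*E_2)=X_1$.
\end{itemize}

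\textbf{Step 3 (main obstacle): identify $R^1p_*E_2$ and conclude.} In the $\mathbb{Q}^3$ case this was done using the explicit null correlation bundle on $\mathcal{M}=\mathbb{P}^3$, and no such explicit description is available here. I would first try to show that $R^1p_*E_2\cong\oh_H$ for a hyperplane section $H$. The support is the jumping locus $S_E$, which is a divisor of class $X_1$. On $S_E$ the sheaf $R^1p_*E_2$ has generic rank one, because $h^1(\mathcal{L}_{\mathrm{ker}}\oplus\oh)=1$ by Proposition \ref{prop2}. Base change then gives that it is a line bundle on $S_E$, and its degree is pinned down by $c_2(E)=X_1^2$.

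Given $R^1p_*E_2\cong\oh_H$, we have $0\rightarrow\oh_X(-1)\rightarrow\oh_X\rightarrow\oh_H\rightarrow 0$. We then lift $\varphi$ to a map $\phi\colon\mathcal{L}^{\perp}\to\oh_X$ compatible with (\ref{ex1}), as in the proof of Theorem \ref{uniform Q3}. Concretely, the obstruction is the image of $\alpha\in\mathrm{Ext}^1(\oh_H,\oh_X(-1))$ in the one-dimensional space $\mathrm{Ext}^1(T_X(-1),\oh_X(-1))$. If this image is zero, $E$ would sit in a sequence incompatible with its splitting type, so the image is nonzero. The snake lemma then gives $E\cong\ker\phi$, and $\phi$ is surjective because $\varphi$ is. Hence $E$ is an orthogonal kernel bundle $\mathcal{L}_{\mathrm{ker}}$.
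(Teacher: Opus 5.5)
Your Steps 2 and 3 track the paper's proof closely. This covers the three-way case analysis on fibre types, the sequence $0\to E\to T_X(-1)\to R^1p_*q^*G\to 0$, the identification of $R^1p_*q^*G$ with a line bundle on a divisor of class $X_1$ whose degree is fixed by $c_2$, and the lift of $\varphi$ to $\mathcal{L}^{\perp}\to\mathcal{O}_X$.

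The genuine gap is in Step 1, where you classify the fibre restrictions $G_x:=q^*G|_{p^{-1}(x)}$. You invoke an ``analogue of Theorem \ref{uniform Q3 2} and Corollary \ref{uniform Q3 3} in dimension $2n-3$'', to be proved by rerunning the H-N argument on $\mathbb{Q}^{2n-3}$. For $n=3$ this is exactly Corollary \ref{uniform Q3 3}. For $n\ge 4$, however, no such result is available, and the argument does not transfer. In the mixed case of Theorem \ref{uniform Q3 2}, the trivial summand came from Theorem \ref{P3}, a classification of certain rank $3$ bundles on $\mathcal{M}=\mathbb{P}^3$. On $\mathbb{Q}^{2n-3}$ you would need a comparable structure theorem for a rank $2n-3$ bundle on $B_{n-1}/P_2$, which you neither state nor prove. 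So nothing in your plan actually produces the trivial summand you propose to ``split off''.

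The missing idea is simple and avoids any analogue of Theorem \ref{uniform Q3 2}. On each fibre, $G_x$ is the kernel of a surjection $\mathcal{O}^{\oplus 2n-1}\to\mathcal{O}(1)$. Hence $G_x^{\vee}$ is globally generated of rank $2n-2>2n-3=\dim p^{-1}(x)$, and a general section of it is nowhere vanishing \cite[Corollary 5.5]{EH}. This gives a trivial quotient $G_x\to\mathcal{O}$ on \emph{every} fibre, not just in one branch. Its kernel is uniform of rank $2n-3$ and splitting type $(0,\ldots,0,-1)$, with $c_1=-X_2$ and $c_2=X_2^2$. The induction hypothesis of the lemma itself identifies it as an $\mathcal{L}_{\mathrm{ker}}$, and $h^1(\mathcal{L}_{\mathrm{ker}})=1$ then gives $G_x\cong\mathcal{L}^{\perp}$ or $\mathcal{L}_{\mathrm{ker}}\oplus\mathcal{O}$.

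A smaller point: your determination of $m$ is incomplete. Excluding $m=0$ and a ``spinor-type'' solution does not rule out other values of $m$. A spinor-type solution cannot arise for $n\ge 3$ anyway, since $c_1(\mathcal{O}_{\mathcal{M}}(1))=X_1+X_2$ forces integral coefficients. The correct argument is that for $n\ge 3$ the defining ideal $I$ of $H^{\bullet}(\mathcal{U},\mathbb{Q})$ contains nothing in degree $2$. Comparing the coefficients of $X_1^2$, $X_2^2$ and $X_1X_2$ in $c_2(p^*E)=X_1^2$ then forces $m=1$ directly.
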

	\begin{proof}
We prove the proposition by induction over $n$. For $n=2$, the assertion holds by Theorem \ref{uniform Q3}. Suppose the assertion is true for $n-1$. Let $E$ be a vector bundle on $X=\mathbb{Q}^{2n-1}$ satisfying the conditions in the proposition. Then the relative H-N filtration of $p^*E$ induces the following exact sequence
$0\rightarrow  q^*G\rightarrow p^*E \rightarrow q^*\oh_{\mathcal{M}}(a)\otimes p^*\oh_{X}(-1) \rightarrow 0$, where $G$ is a rank $2n-2$ bundle. Since $c_1(E)=-X_1$ and $c_1(q^*\oh_{\mathcal{M}}(a)\otimes p^*\oh_{X}(-1))=a(X_1+X_2)-X_1$, we have $c_1(q^*G)=-a(X_1+X_2)$. Assume $c_2(q^*G)=b_1(X_1^2+X_2^2)+b_2X_1X_2$. Then 
\begin{align*}
c_2(p^*E)&=c_2(q^*G)+c_1(q^*\oh_{\mathcal{M}}(a)\otimes p^*\oh_{X}(-1))c_1(q^*G)\\
&=(b_1+a-a^2) X_1^2+(b_1-a^2)X_2^2+(b_2+a-2a^2)X_1X_2.
\end{align*}
Note that for $n\ge 3$, the generators of the defining ideal of the Chow ring of $\mathcal{U}$ have degree at least $4$. Consequently, $c_2(E)=X_1^2$ implies that 
\[
b_1+a-a^2=1,~b_1-a^2=0~\text{and}~b_2+a-2a^2=0,
\]
which in turn yields $b_2=b_1=a=1$. So $c_1(q^*G)=-(X_1+X_2)$ and $c_2(q^*G)=X_1^2+X_2^2+X_1X_2$. On the other hand, for any $p$-fiber $p^{-1}(x)\cong \mathbb{Q}^{2n-3}$, from the relative H-N filtration, we obtain the exact sequence
\[
0\rightarrow  q^*G|_{p^{-1}(x)}\rightarrow \oh^{2n-1}_{p^{-1}(x)} \rightarrow \oh_{p^{-1}(x)}(1) \rightarrow 0.
\]
This means that $G|_{qp^{-1}(x)}\cong q^*G|_{p^{-1}(x)}$ is a uniform $(2n-2)$-bundle on $\mathbb{Q}^{2n-3}$ with $c_1(G|_{qp^{-1}(x)})=-X_2$ and $c_2(G|_{qp^{-1}(x)})=X_2^2$, whose splitting type is exactly $(0,\ldots,0,-1)$. Since the dual of $G|_{qp^{-1}(x)}$ is globally generated and its rank is greater than $2n-3$, it contains a trivial subbundle of rank $1$. %(see \cite[Corollary 5.5]{EH}). 
So $\oh_{p^{-1}(x)}$ is a quotient bundle of $G|_{qp^{-1}(x)}$. 
 Let $K_x:=\ker(G|_{qp^{-1}(x)}\to \oh_{p^{-1}(x)})$. Then $K_x$ is precisely a uniform bundle of rank $2n-3$ on $\mathbb{Q}^{2n-3}$ with splitting type$(0,\ldots,0,-1)$. In particular, its Chern classes coincide with those of $G|_{qp^{-1}(x)}$.
 By the induction hypothesis, $K_x$ is isomorphic to an orthogonal kernel bundle $\mathcal{L}_{\mathrm{ker}}$ on $\mathbb{Q}^{2n-3}$ for any $x\in X$. By Proposition \ref{prop2}, $h^1(\mathbb{Q}^{2n-3},\mathcal{L}_{\mathrm{ker}})=1$, so $q^*G|_{p^{-1}(x)}$ is isomorphic to either $\mathcal{L}^{\perp}$ or 
 $\mathcal{L}_{\mathrm{ker}}\oplus \mathcal{O}_{\mathbb{Q}^{2n-3}}$. %In fact, from the above analysis and via explicit computations, we can further deduce that $c_i(E)=(-1)^iX_1^i$ for all $1\le i\le 2n-1$.\\
 
 Arguing as in the proof of Claim \ref{claim:1}, with the necessary vector bundles replaced, we conclude that there exist two different points $x$ and $y$ such that $q^*G|_{p^{-1}(x)}\cong \mathcal{L}^{\perp}$ and $q^*G|_{p^{-1}(y)}\cong \mathcal{L}_{\mathrm{ker}}\oplus \mathcal{O}_{\mathbb{Q}^{2n-3}}$.
 Let $S_E:=\{y\in X\mid q^*G|_{p^{-1}(y)}\cong\mathcal{L}_{\mathrm{ker}}\oplus \mathcal{O}_{\mathbb{Q}^{2n-3}}\}$. Then the set $S_E$ is closed in $X$ by the semicontinuity theorem. \\
 
 \begin{claim}\phantomsection\label{claim:one} $S_E$ is the support of the sheaf $R^1p_{*}q^*G$.
 \end{claim}
 
Proof. By Propositions \ref{prop1} and \ref{prop2}, every higher direct image $R^ip_{*}q^*G~(i\ge 2)$ vanishes. Since $R^1p_{*}q^*G$ is zero outside of $S_E$, we have $\mathrm{supp}~ R^1p_{*}q^*G\subset S_E$. \cite[Theorem 12.11]{Har1} further implies the natural map $\phi^1(y)\colon R^1p_{*}q^*G(x)\rightarrow H^1(p^{-1}(x),q^*G|_{p^{-1}(x)})$ is surjective for any point $x\in X$, and is therefore an isomorphism. This yields $\mathrm{supp}~ R^1p_{*}q^*G= S_E$. \\

With the help of the sheaf $F:=R^1p_{*}q^*G$, we provide $S_E$ with a complex structure. By projecting the relative H-N filtration of $p^*E$ 
\[0\rightarrow  q^*G\rightarrow p^*E \rightarrow q^*\oh_{\mathcal{M}}(1)\otimes p^*\oh_{X}(-1) \rightarrow 0
 \]
 onto $X$, we obtain the exact sequence 
  \begin{align}\label{exact3-3}
  	0\rightarrow E \rightarrow T_X(-1) \rightarrow F\rightarrow 0,
  \end{align}
 for $p_{*}q^*G$ is torsion free and zero outside of $S_E$. Denote by $h\colon E \rightarrow T_X(-1)$ the morphism arising from the exact sequence (\ref{exact3-3}). $h$ is then a sheaf monomorphism between locally free sheaves of the same rank with cokernel $F$. 
 
 Let $\mathrm{Im(det}~ h)\subset \det T_X(-1)$ be the image of the determinant $\det h\colon\det E\hookrightarrow\det T_X(-1)$. Then 
 \[
 I_h=\mathrm{Im(det}~ h)\otimes \det \Omega_X(1)\subset \mathcal{O}_X
 \]
 is an invertible sheaf of ideals with $\mathrm{supp}~ \mathcal{O}_X/ I_h=\mathrm{supp}~ R^1p_{*}q^*G= S_E$. We now define the divisor $D_E$ by $D_E=(S_E, \mathcal{O}_X/ I_h)$. Then $[D_E]=\det T_X(-1)-\det E=X_1$. Thus $D_E$ is a prime divisor on $X$ of degree $1$ and the set $S_E$ can be regarded in a canonical fashion as the support of $D_E$. \\
 
 Since $F$ is the cokernel of the morphism $h$, one readily sees that $I_{D_E}F=0$, which implies that $F\cong i_{*}i^{*}F$, where $i\colon D_E\hookrightarrow X$ denotes the closed embedding of $D_E$ into $X$. By the proof of Claim \ref{claim:one}, we have $\dim i^{*}F(y)=h^1(p^{-1}(y),q^*G|_{p^{-1}(y)})=1$ for every $y\in D_E$, so $i^{*}F$ is a rank $1$ coherent sheaf on $D_E$. By Nakayama's lemma, $(i^{*}F)_y$ is cyclic. Moreover,
pulling back the exact sequence (\ref{exact3-3}) along $i$, we obtain a long exact sequence
 \[
 0\rightarrow \mathcal{T}or_1^{\mathcal{O}_X}( i_{*}\mathcal{O}_{D_E},F) \rightarrow i^*E\rightarrow i^*T_X(-1) \rightarrow i^{*}F\rightarrow 0.
 \]
 As $F\cong i_{*}i^{*}F$, we have $\mathcal{T}or_1^{\mathcal{O}_X}( i_{*}\mathcal{O}_{D_E},F)\cong i^{*}F\otimes i^{*}I_{D_E}$. Note that $i^{*}I_{D_E}$ is exactly the conormal bundle $N^{\vee}_{D_E/X}$, so $\mathcal{T}or_1^{\mathcal{O}_X}( i_{*}\mathcal{O}_{D_E},F)\cong i^{*}F\otimes \mathcal{O}_{D_E}(-1)$, where we use that $D_E$ is a degree $1$ prime divisor on $X$ with $\mathrm{Pic}(D_E)\cong \mathbb{Z}$. Since $i^*E$ is locally free, $i^{*}F\otimes \mathcal{O}_{D_E}(-1)$ is a rank $1$ torsion free sheaf on $D_E$, so is $i^{*}F$. Together with the cyclicity of $(i^{*}F)_y$, we obtain that each $(i^{*}F)_y$ is free of rank one, so $i^{*}F$ is a line bundle on $D_E$. \\
 
 Set $i^{*}F=\mathcal{O}_{D_E}(l)$. Then 
 $c_2(F)=c_2(i_{*}\mathcal{O}_{D_E}(l))=-(l-1)[D_E]^2=-(l-1)X_1^2$.
 Meanwhile, from the exact sequence (\ref{exact3-3}) we get $c_2(F)=c_2(T_X(-1))-c_2(E)-c_1(E)c_1(F)=X_1^2$. This forces $l = 0$. Therefore $F$ fits the following exact sequence on $X$:
 \[
 0\rightarrow \mathcal{O}_X(-1) \rightarrow  \mathcal{O}_X\rightarrow F\rightarrow 0.
 \]
 Applying the functor $\operatorname{Hom}(-,\oh_X(-1))$ to the exact sequence $(\ref{exact3-3})$ and adapting the argument from Theorem \ref{uniform Q3}, we deduce that $E$ is isomorphic to an orthogonal kernel bundle. 
\end{proof}

\begin{proposition}\label{prop3-6}
Assume $n$ is at least $3$. Then every uniform $(2n-1)$-bundle on $\mathbb{Q}^{2n-1}$ of splitting type $(0,\ldots,0,-1)$ either splits or is isomorphic to an orthogonal kernel bundle $\mathcal{L}_{\mathrm{ker}}$.
\end{proposition}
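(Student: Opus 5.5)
The plan is to reduce the statement to Lemma \ref{lemma3-5}. Concretely, I will show that an unsplit $E$ of this splitting type must have $c_1(E)=-X_1$ and $c_2(E)=X_1^2$.

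First I run the relative H-N filtration of $p^*E$ on the standard diagram for $X=\mathbb{Q}^{2n-1}$, $\mathcal{U}=B_n/P_{1,2}$, $\mathcal{M}=B_n/P_2$. This gives
\[
0\rightarrow q^*G\rightarrow p^*E\rightarrow q^*\oh_{\mathcal{M}}(a)\otimes p^*\oh_X(-1)\rightarrow 0
\]
with $G$ of rank $2n-2$, exactly as in the proof of Lemma \ref{lemma3-5}. Then $c_1(q^*G)=-a(X_1+X_2)$. Write $c_2(q^*G)=b_1(X_1^2+X_2^2)+b_2X_1X_2$.

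For $n\ge 3$ the ideal $I$ has generators only in degree $\ge 4$. So the requirement that $c_1(p^*E)$ and $c_2(p^*E)$ be pulled back from $X$ is an honest polynomial identity in degrees $\le 2$. The degree-$2$ computation from Lemma \ref{lemma3-5}, with $c_2(E)=\mu_2X_1^2$ left free, gives
\[
b_1-a^2=0,\qquad b_2+a-2a^2=0,\qquad \mu_2=b_1+a-a^2.
\]
These alone do not bound $a$. To bound it I restrict to a $p$-fiber $p^{-1}(x)\cong\mathbb{Q}^{2n-3}$, where $q^*\oh_{\mathcal{M}}(1)$ restricts to $\oh(1)$. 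There the sequence becomes $0\to G|\to\oh^{2n-1}\to\oh_{\mathbb{Q}^{2n-3}}(a)\to 0$. A surjection from a trivial bundle onto $\oh(a)$ forces $a\ge 0$. Applying the same argument to the dual filtration, or using the higher Chern class constraints, should force $a\le 1$; I would first try to push the $c_3$ relations through, which still avoid the ideal $I$ when $n\ge 3$.

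If $a=0$, the fiber restrictions are trivial. By \cite[Lemma 3.1]{FLL} and base change, $E$ is an extension of $p_*q^*G$, which is uniform of type $(0,\dots,0)$ and hence trivial, by $\oh_X(-1)$. Therefore $E$ splits, as in Case I of Theorem \ref{uniform Q3}. If $a=1$, the relations give $b_1=b_2=1$, so $c_1(E)=-X_1$ and $c_2(E)=\mu_2X_1^2=X_1^2$, and Lemma \ref{lemma3-5} yields $E\cong\mathcal{L}_{\mathrm{ker}}$.

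The main obstacle is excluding $a\ge 2$. On a $p$-fiber, $\oh(a)$ with $a\ge2$ cannot be a quotient of $\oh^{2n-1}$ with kernel $G|$ whose splitting type on lines is $(0,\dots,0,-1)$. The reason is that restricting to a line in the fiber gives the quotient $\oh(a)$ of a trivial bundle, while the kernel has $c_1=-a<-1$; this contradicts the kernel splitting type $(0,\dots,0,-1)$ inherited from $p^*E$. Equivalently, $c_1$ of $E$ restricted to a line equals $-1$, which pins down the degree of the quotient on the fiber lines. I would make this precise by noting that lines in $p^{-1}(x)$ correspond to planes through $x$, and that the splitting type of $q^*G$ on them is controlled by that of $E$. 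If this is delicate, the fallback is to carry out the full Chern computation modulo $I$ with Mathematica, as done for the $\mathbb{Q}^5$ propositions above.
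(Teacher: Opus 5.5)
\textbf{Gap: the bound $a\le 1$.} Everything up to $c_2(E)=aX_1^2$ (with $b_1=a^2$, $b_2=2a^2-a$) is right. So are your conclusions for $a=0$ (split) and $a=1$ (Lemma \ref{lemma3-5}). The gap is the exclusion of $a\ge 2$, and the argument you put forward as the main one fails.

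Lines inside a $p$-fiber $p^{-1}(x)\cong\mathbb{Q}^{2n-3}$ are not $q$-fibers. $p^*E$ is trivial on them, and the splitting type of $E$ says nothing about $q^*G$ there. The splitting type of $E$ only controls degrees along $q$-fibers. On a $q$-fiber the quotient $q^*\oh_{\mathcal{M}}(a)\otimes p^*\oh_X(-1)$ has degree $-1$ for every $a$, because $X_1+X_2$ restricts trivially to $q$-fibers. So $c_1(E)\cdot\ell=-1$ pins down nothing on fiber lines.

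In Lemma \ref{lemma3-5}, the type $(0,\dots,0,-1)$ of $G|_{qp^{-1}(x)}$ is deduced from $a=1$, so invoking it to bound $a$ is circular. There is also no fiberwise obstruction. For $a\ge 2$, a general $(2n-1)$-dimensional space of sections of $\oh_{\mathbb{Q}^{2n-3}}(a)$ is base-point free. Its kernel then has degree $-a$ on every line, with all summands $\le 0$. The dual filtration does not help either: it only reproduces $a\ge 0$.

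\textbf{The fix, and how it compares with the paper.} The route you mentioned first but did not carry out does close the gap. Write $c_3(q^*G)=t_1(X_1^3+X_2^3)+t_2X_1X_2(X_1+X_2)$. In
$c_3(p^*E)=c_3(q^*G)+c_2(q^*G)\bigl((a-1)X_1+aX_2\bigr)$,
the coefficients of $X_1^2X_2$ and $X_1X_2^2$ are $t_2+3a^3-3a^2+a$ and $t_2+3a^3-2a^2$. For $n\ge 3$ the ideal $I$ starts in degree $4$, so the four cubic monomials are independent in $H^6(\mathcal{U},\mathbb{Q})$. Both coefficients must therefore vanish, which gives $a^2=a$.

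With this step supplied, your proof is a valid alternative to the paper's. The paper instead restricts $E$ to a maximal linear subspace $\mathbb{P}^{n-1}$ and invokes \cite{ellia}: the restriction is either split or $\Omega_{\mathbb{P}^{n-1}}(1)\oplus\oh^{\oplus n}_{\mathbb{P}^{n-1}}$. In the split case it uses \cite[Corollary 3.6]{DFG1}, since every plane lies in some $\mathbb{P}^{n-1}$. Otherwise it reads off $c_2(E)=X_1^2$ from the restriction and applies Lemma \ref{lemma3-5}. Your version stays within the universal-family Chern calculus already used in Lemma \ref{lemma3-5} and avoids both external results, at the cost of this short degree-$3$ computation.
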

\begin{proof}
Let $E$ be a uniform bundle of splitting type $(0,\ldots,0,-1)$ on $\mathbb{Q}^{2n-1}~(n\ge 3)$. Then $E|_{\mathbb{P}^{n-1}}$ is also uniform, for $\mathbb{P}^{n-1}$ is a largest linear space on $\mathbb{Q}^{2n-1}$. By \cite[Proposition 2.2, Page 30]{ellia}, $E|_{\mathbb{P}^{n-1}}$ either splits or is isomorphic to $\Omega_{\mathbb{P}^{n-1}}(1)\oplus \oh^{\oplus n}_{\mathbb{P}^{n-1}}$.\\

 Note that every $2$-plane in $\mathbb{Q}^{2n-1}$ is contained in some $\mathbb{P}^{n-1}$.
If $E|_{\mathbb{P}^{n-1}}$ splits, then by \cite[Corollary 3.6]{DFG1}, $E$ splits. Hence we may assume $E|_{\mathbb{P}^{n-1}}$ is isomorphic to $\Omega_{\mathbb{P}^{n-1}}(1)\oplus \oh^{\oplus n}_{\mathbb{P}^{n-1}}$. Note that the class of $\mathbb{P}^{n-1}$ in the Chow ring of $\mathbb{Q}^{2n-1}$ is $\frac{X_1^n}{2}$ and for any $1\le i\le n-1$, $c_i(\Omega_{\mathbb{P}^{n-1}}(1))=(-1)^i$. Consequently, we have for any $1\le i\le n-1$, $c_i(E)=(-1)^iX^i_1$. By Lemma \ref{lemma3-5}, we are done. 
\end{proof}

For the remaining three splitting types, we shall prove that all uniform rank $5$ vector bundles of these splitting types split.
\begin{proposition}\label{prop3-7}
	Every uniform bundle of rank $5$ on $\mathbb{Q}^5$ with splitting type $(2,1,0,0,0)$ splits.%$(2,1,0,0,0),~(2,2,1,0,0)$ or $(2,1,0,0,-1)$ splits.
\end{proposition}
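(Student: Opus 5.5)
We follow the same strategy as in Propositions \ref{prop3-2}--\ref{prop3-4}. For splitting type $(2,1,0,0,0)$ the relative H-N filtration of $p^*E$ has three steps:
\[
0\subset E_1\subset E_2\subset p^*E .
\]
Here $E_1=q^*G_1\otimes p^*\oh_X(2)$ and $E_2/E_1=q^*G_2\otimes p^*\oh_X(1)$, with $G_1,G_2$ line bundles on $\mathcal{M}$. The last quotient is $E_3=p^*E/E_2=q^*G_3$, with $G_3$ of rank $3$. We write
\[
c_1(E_1)=a_1(X_1+X_2)+2X_1,\qquad c_1(E_2/E_1)=a_2(X_1+X_2)+X_1,
\]
so that $c_1(E_3)=-(a_1+a_2)(X_1+X_2)$. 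We parametrize $c_2(E_3)$ and $c_3(E_3)$ in the basis $X_1^2+X_2^2,\ X_1X_2$ and $X_1^3+X_2^3,\ X_1X_2(X_1+X_2)$. Then we impose $c_t(p^*E)\equiv\mu_tX_1^t \pmod I$ for $1\le t\le 5$. Since the generators of $I$ have degree $\ge 4$, the degree $2$ and $3$ conditions give honest equations on the coefficients. We expect these to force $a_1+a_2=0$. The fact that $a_1,a_2\in\mathbb Z$ (because $\mathrm{Pic}(\mathcal{M})=\mathbb{Z}\cdot\oh_{\mathcal{M}}(1)$) should already cut the solution set down to finitely many pairs. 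We then check each pair.

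\textbf{Case $a_1+a_2=0$.} Then $c_1(E_3|_{p^{-1}(x)})=0$, and $E_3|_{p^{-1}(x)}$ is a quotient of $p^*E|_{p^{-1}(x)}$, which is trivial. By \cite[Lemma 3.1]{FLL}, $E_3|_{p^{-1}(x)}$ is therefore trivial, and hence so is $E_2|_{p^{-1}(x)}$. By base change, $p_*E_2$ is a uniform $2$-bundle of splitting type $(2,1)$ and $p_*E_3$ is a uniform $3$-bundle of splitting type $(0,0,0)$. Both split, by \cite[Theorem 4.1]{KS} (rank $\le 3$ on $\mathbb{Q}^5$). Since $H^1$ of line bundles on $\mathbb{Q}^5$ vanishes, $E$ splits.

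\textbf{Case $a_1+a_2\ne 0$.} Suppose solutions with, for instance, $(a_1,a_2)=(-1,0)$ or $(-1,1)$ survive. Then we restrict to a $p$-fiber $p^{-1}(x)\cong\mathbb{Q}^3$, on which $p^*E$ is trivial. The filtration exhibits $E_3|_{p^{-1}(x)}$ as a globally generated quotient of $\oh^{5}$ with prescribed Chern classes. It also exhibits $E_2|_{p^{-1}(x)}$ as a rank $2$ subbundle of $\oh^5$ that is an extension of line bundles $\oh(a_2)$ by $\oh(a_1)$ on $\mathbb{Q}^3$ (with $a_i\le 0$). Such an extension splits since $H^1(\oh_{\mathbb{Q}^3}(k))=0$. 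We then try to reach a contradiction in one of two ways. The first is to compare $c_2,c_3$ of $E_2|_{p^{-1}(x)}=\oh(a_1)\oplus\oh(a_2)$, computed from the Whitney relation $c(E_2)c(E_3)=1$ on the fiber, with the parametrized values. The second is to push forward as in Proposition \ref{prop3-3}, projecting $0\to E_2\to p^*E\to E_3\to 0$ to obtain $E$ as an extension of $p_*E_3$ by $p_*E_2$. In the second approach the terms are homogeneous bundles such as $T_X(-1)$ or $\mathcal{S}$-twists, obtained by \cite[Proposition 10.13]{ott} when $G_3$ is a twist of a tangent/quotient-type bundle. Such an extension is incompatible with the splitting type $(2,1,0,0,0)$, in the same way as Case b of Proposition \ref{prop3-3}: stability of $T_X(-1)$ or the ACM property of $\mathcal{S}$ conflicts with the $c_1$ bookkeeping.

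\textbf{Main obstacle.} The main obstacle is the Chern class computation: showing that the polynomial system modulo $I$ has only $a_1+a_2=0$ among its integral solutions, or otherwise handling any stray solutions. We would first use only the degree-$2$ and degree-$3$ equations, where no relations from $I$ intervene. For any remaining solution we would test the fiberwise Whitney relation on $\mathbb{Q}^3$ described above, which should give a quick contradiction.
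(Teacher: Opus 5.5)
\textbf{Gap: the reduction to $a_1+a_2=0$ is never established, and your proposed way of establishing it cannot work.}

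The whole proof hinges on showing that the Chern-class conditions force $a_1+a_2=0$. You state this only as an expectation. Put $\sigma=a_1+a_2$ and work out the degree-$2$ and degree-$3$ conditions, where $I$ has no relations.
\begin{itemize}
\item The degree-$2$ coefficients give $b_1=\sigma^2-a_1a_2$ and $b_2=\sigma(2\sigma+3)-2a_1a_2-2a_2-a_1$.
\item The $X_2^3$-coefficient of $c_3(p^*E)$ fixes the coefficient of $X_1^3+X_2^3$ in $c_3(E_3)$.
\item The two mixed coefficients of $c_3(p^*E)$ fix the remaining coefficient and impose one further condition, $2a_1^2+4a_1+a_2^2+a_2=0$.
\end{itemize}
The integer solutions of this equation are $(0,0)$, $(0,-1)$, $(-2,0)$, $(-2,-1)$, $(-1,-2)$ and $(-1,1)$. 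You also have $a_i\le 0$, since $E_2|_{p^{-1}(x)}\cong\mathcal{O}(a_1)\oplus\mathcal{O}(a_2)\subset\mathcal{O}^{5}$. Even so, four pairs with $a_1+a_2\neq 0$ survive. For instance, $(0,-1)$ gives $c(E_1)=1+2X_1$, $c(E_2/E_1)=1-X_2$ and $c(E_3)=1+h_1+h_2+h_3$, where $h_k$ are the complete symmetric polynomials in $X_1,X_2$. This data is fully consistent through degree $3$.

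Neither of your fallbacks removes these pairs.
\begin{itemize}
\item The fiberwise Whitney relation on $\mathbb{Q}^3$ is just the specialization $X_1=0$ of the same equations (the pure $X_2^k$-coefficients), so it adds no information. Indeed, such fiberwise configurations, e.g.\ $\mathcal{O}\oplus\mathcal{O}(-1)\subset\mathcal{O}^5$ on $\mathbb{Q}^3$, actually exist.
\item The pushforward variant assumes $G_3$ is a twist of a homogeneous bundle, and nothing justifies that.
\end{itemize}
Of your sample pairs, $(-1,0)$ already violates the degree-$3$ equation, and $(-1,1)$ has $a_1+a_2=0$.

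\textbf{What closes the gap: the degree-$4$ condition modulo $I$, using $c_4(E_3)=0$.} The only degree-$4$ generator of $I$ is $\Sigma_2(X_1^2,X_2^2)$, which involves only even powers of $X_1$ and $X_2$. Since
$c_4(p^*E)=c_1(E_1)c_1(E_2/E_1)\,c_2(E_3)+\bigl(c_1(E_1)+c_1(E_2/E_1)\bigr)c_3(E_3)$,
the congruence $c_4(p^*E)\equiv\mu_4X_1^4$ forces the $X_1^3X_2$-coefficient to vanish. Checking the four stray pairs:
\begin{itemize}
\item For $(0,-1)$: $c_4(p^*E)=2X_1^4-X_2h_3(X_1,X_2)$, so the $X_1^3X_2$-coefficient is $-1$.
\item For $(-2,0)$: $c(E_3)=\sum_{k\le 3}2^kh_k$ and $c_4(p^*E)=8X_1^4-16X_2h_3(X_1,X_2)$, so the coefficient is $-16$.
\item For $(-2,-1)$ and $(-1,-2)$: the coefficient is $-45$ and $-62$ respectively.
\end{itemize}
Hence only $(a_1,a_2)=(0,0)$ survives. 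This is the outcome the paper records from solving the full system for $1\le t\le 5$.

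From that point on, your treatment of the case $a_1+a_2=0$ is correct and matches the paper's conclusion. That treatment is: triviality on $p$-fibers via \cite[Lemma 3.1]{FLL}; pushforward to uniform bundles of types $(2,1)$ and $(0,0,0)$; splitting of these by Kachi--Sato; and vanishing of $H^1$ of line bundles on $\mathbb{Q}^5$.
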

\begin{proof}
	%\noindent\textbf{Type 1: splitting type $(2,1,0,0,0)$.}
	Let $E$ be a uniform rank $5$ bundle on $\mathbb{Q}^5$ of this splitting type.
	The relative H-N filtration of $p^*E$ yields exact sequences
	\begin{align*}
		&0\rightarrow E_1(=q^*G_1\otimes p^*\oh_{X}(2))\rightarrow  E_2\rightarrow E_2/E_1(=q^*{G_2}\otimes p^*\oh_{X}(1))\rightarrow 0,\\
		&0\rightarrow E_2\rightarrow  p^*E\rightarrow E_3(=q^*{G_3})\rightarrow 0, 
	\end{align*}
	where $G_1$, $G_2$ are line bundles and $G_3$ is a rank $3$ bundle. Assume that $c_1(E_1)=a_1(X_1+X_2)+2X_1$ and $c_1(E_2/E_1)=a_2(X_1+X_2)+X_1$. Then $c_1(E_3)=-(a_1+a_2)(X_1+X_2)$, as $c_1(p^*E)=3X_1$. We further assume $c_2(E_3)=b_1(X_1^2+X_2^2)+b_2X_1X_2$ and $c_3(E_3)=c_1(X_1^3+X_2^3)+c_2X_1X_2(X_1+X_2)$. By Whitney's formula, we obtain the Chern polynomials of $p^*E$ as polynomials in $X_1,X_2$. Since $c_t(p^*E)\equiv \mu_t X_1^t \pmod{I}$ for $1\le t\le 5$, we conclude that the coefficients of $X_2^i~(i=1,2,3)$ and $X_1 X_2^j, ~X_1^jX_2~(j=1,2,3)$ vanish in $c_t(p^*E)$. This gives a system of polynomial equations in the unknowns $a_1,a_2,b_1,b_2,c_1,c_2$.
	Solving this system yields
	$(a_1,a_2)=(0,0)$. Thus $E_2|_{p^{-1}(x)}$ and $E_3|_{p^{-1}(x)}$ are trivial for every $x$, so $E$ splits. 

\end{proof}

\begin{proposition}\label{prop3-8}
	Every uniform bundle of rank $5$ on $\mathbb{Q}^5$ with splitting type $(2,1,0,0,-1)$ splits.%$(2,1,0,0,0),~(2,2,1,0,0)$ or $(2,1,0,0,-1)$ splits.
\end{proposition}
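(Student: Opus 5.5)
We follow the template of Proposition \ref{prop3-7}, now with a four-step relative H-N filtration. Let $E$ be uniform of rank $5$ on $X=\mathbb{Q}^5$ with splitting type $(2,1,0,0,-1)$. The relative H-N filtration of $p^*E$ with respect to $q$ gives a filtration $0\subset E_1\subset E_2\subset E_3\subset p^*E$ with graded pieces
\[
E_1=q^*G_1\otimes p^*\oh_X(2),\quad E_2/E_1=q^*G_2\otimes p^*\oh_X(1),\quad E_3/E_2=q^*G_3,\quad p^*E/E_3=q^*G_4\otimes p^*\oh_X(-1).
\]
Here $G_1,G_2,G_4$ are line bundles on $\mathcal{M}$ and $G_3$ has rank $2$. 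Write $c_1(G_1)=a_1$, $c_1(G_2)=a_2$ and $c_1(G_4)=a_4$ in multiples of $X_1+X_2$. Since $c_1(p^*E)=2X_1$, we get $c_1(G_3)=-(a_1+a_2+a_4)(X_1+X_2)$. Parametrize $c_2(q^*G_3)=b_1(X_1^2+X_2^2)+b_2X_1X_2$.

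We expand $c_t(p^*E)$ for $1\le t\le 5$ by Whitney's formula. Then we require that it be congruent to $\mu_tX_1^t$ modulo $I=(\Sigma_d(X_1^2,X_2^2))_{2\le d\le 3}$. Concretely, after reducing by $I$, the coefficients of all monomials containing $X_2$ must vanish. This gives a polynomial system in $a_1,a_2,a_4,b_1,b_2$. We expect it to force $a_1=a_2=a_4=0$. An alternative outcome, like the $(-1,1)$-type solutions in Theorem \ref{uniform Q3}, would be one where only the partial sums relevant to a sub-extension vanish. Solving this system is the computational heart of the proof. In the paper's style it is done by computer algebra, and we expect it to be the main obstacle, because the extra unknowns of $G_3$ make the system larger than in Proposition \ref{prop3-7}.

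Suppose all $a_i=0$. Then every graded piece restricted to a $p$-fiber $p^{-1}(x)\cong\mathbb{Q}^3$ has trivial determinant. By \cite[Lemma 3.1]{FLL}, each piece is trivial on each $p$-fiber, and the rank $2$ piece is handled the same way. By base change, $p_*$ of each piece is a vector bundle on $X$, and these bundles are uniform of splitting types $(2)$, $(1)$, $(0,0)$ and $(-1)$. They split, since a uniform $2$-bundle of type $(0,0)$ is trivial. Projecting the filtration to $X$ exhibits $E$ as an iterated extension of $\oh_X(2)$, $\oh_X(1)$, $\oh_X^{\oplus 2}$ and $\oh_X(-1)$. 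All $H^1$ of line bundles on $\mathbb{Q}^5$ vanish, so $E$ splits.

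If the system admits some other solution, we fall back on the reductions already available. The gap $a_2-a_3\ge 2$ does not occur here, so Lemma \ref{lemma1} does not apply directly. Instead, for a solution with nontrivial $G_4$ we would push forward as in Proposition \ref{prop3-4}. There $p^*E/E_3$ pushes forward to $\mathcal{S}(-1)$ or $T_X(-1)$, and we compare ranks and $c_1$: a rank-$5$ bundle cannot contain the rank-$5$ quotient $T_X(-1)$ together with a further piece of positive degree. We then rule out the case by the first-Chern-class and stability argument used in Case b of Proposition \ref{prop3-3}. This leaves only the split case.
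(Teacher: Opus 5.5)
There is a genuine gap. You leave the step you call the computational heart as an expectation, and the expectation does not match what the system gives. The Chern class constraints yield $a_1=0$, $c_1(G_3)=0$ and $a_2\in\{0,-1\}$. So besides the all-zero solution there is a second branch $a_2=-1$, $a_4=1$ that the numerics do not exclude.

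Your fallback for such branches is not an argument as written. On $\mathbb{Q}^5$ the piece $q^*\oh_{\mathcal{M}}(1)\otimes p^*\oh_X(-1)$ pushes forward to $T_X(-1)$; the spinor bundle does not arise this way here. The induced map $E\to T_X(-1)$ need not be surjective. What you actually get is $0\to p_*E_3\to E\to T_X(-1)\to R^1p_*E_3\to 0$. A Case b style argument requires identifying $p_*E_3$ and $R^1p_*E_3$, i.e. knowing $E_3|_{p^{-1}(x)}$ on every fibre, and nothing in your proposal controls this. The fallback also says nothing about hypothetical solutions with $a_4=0$ but $a_1\neq 0$.

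The missing idea is that the one fact you need from the computation is $a_1=0$, which holds in both branches. It gives $E_1=p^*\oh_X(2)$, a subbundle of $p^*E$ pulled back from $X$. Pushing forward $0\to p^*\oh_X(2)\to p^*E\to K\to 0$ yields $0\to\oh_X(2)\to E\to p_*K\to 0$, with $p_*K$ uniform of rank $4$ and splitting type $(1,0,0,-1)$. By \cite[Theorem 4.3]{FLL}, an unsplit uniform $4$-bundle on $\mathbb{Q}^5$ is a twisted spinor bundle or its dual, of splitting type $(1,1,0,0)$ up to twist. Hence $p_*K$ splits, and the extension splits because line bundles on $\mathbb{Q}^5$ have no $H^1$. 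This is the paper's argument; it disposes of both branches at once with no analysis of $G_4$.

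Your Case b idea can be completed once $a_1=0$ is known. Then $E_3|_{p^{-1}(x)}$ is the kernel of a surjection $\oh^{\oplus 5}\to\oh_{\mathbb{Q}^3}(1)$ containing a trivial summand, hence isomorphic to $\mathcal{L}_{\mathrm{ker}}\oplus\oh_{\mathbb{Q}^3}$. So $p_*E_3=\oh_X(a)$ and $R^1p_*E_3=\oh_X(b)$, and stability of $T_X(-1)$ gives $a=b+2\ge 3$, contradicting the splitting type. That route is longer, and either way the computation must actually be carried out to obtain $a_1=0$.
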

\begin{proof}
Let $E$ be a vector bundle as in the proposition. The relative H‑N filtration of $p^*E$ now consists of three successive short exact sequences
\begin{align*}
	&0\rightarrow E_1\bigl(=q^*G_1\otimes p^*\mathcal{O}_{X}(2)\bigr)\rightarrow  E_2\rightarrow E_2/E_1\bigl(=q^*G_2\otimes p^*\mathcal{O}_{X}(1)\bigr)\rightarrow 0,\\
	&0\rightarrow E_2\rightarrow  E_3\rightarrow E_3/E_2\bigl(=q^*G_3\bigr)\rightarrow 0,\\
	&0\rightarrow E_3\rightarrow  p^*E\rightarrow E_4\bigl(=q^*G_4\otimes p^*\mathcal{O}_{X}(-1)\bigr)\rightarrow 0,
\end{align*}
where $G_1,G_2,G_4$ are line bundles and $G_3$ is a rank $2$ bundle. Assume $c_1(q^*G_i)=a_i(X_1+X_2)$ for $1\le i\le 3$. Then $c_1(q^*{G_4})=-(a_1+a_2+a_3)(X_1+X_2)$. We further assume $c_2(q^*{G_3})=b_1(X_1^2+X_2^2)+b_2X_1X_2$. By computation, one finds that $a_1=a_3=0$ and $a_2=0$ or $-1$. In either case, $E_1=p^*\mathcal{O}_{X}(2)$. Clearly, $E_1$ is a subbundle of $p^*E$. Let $K$ denote the quotient bundle. From $0\rightarrow p^*\mathcal{O}_{X}(2)\rightarrow  p^*E\rightarrow K\rightarrow 0$, we obtain an exact sequence $0\rightarrow \mathcal{O}_{X}(2)\rightarrow  E\rightarrow  p_*K\rightarrow 0$. It is not hard to see that $p_*K$ is a uniform $4$-bundle on $\mathbb{Q}^5$ of splitting type $(1,0,0,-1)$. By \cite[Theorem 4.3]{FLL}, $p_*K$ splits. Hence $E$ also splits.
\end{proof}

We now consider the most subtle type of splitting. Before proceeding to the proof, we first study the geometry of the universal family $\mathcal{U}$ in detail. Denote by $V := T_X(-1)$, and let $\bar{p}\colon \mathbb{P}(V)\to X$ be the associated projective bundle. We then have a commutative diagram
\[
\begin{tikzcd}
	\mathcal{U} \arrow[hookrightarrow, r] \arrow[d, "p" swap] & \mathbb P(V) \arrow[ld, "\bar{p}" swap] \\
	X.
\end{tikzcd}
\]
Recall that $X = \mathbb{Q}^5\subset \mathbb{P}^6$ is defined by a non‑degenerate symmetric bilinear form $\mathcal{Q}$. The form $\mathcal{Q}$ induces a non‑degenerate quadratic form $\mathfrak{q}$ on $V$, that is, $\mathfrak{q}\in  H^0(\mathbb{P}(V),\mathcal{O}_{\mathbb{P}(V)}(2))\cong H^0(X,\operatorname{Sym}^2 V^{\vee})$. On each fiber of $\mathbb{P}(V)$, $\mathfrak{q}_x$ defines a quadric hypersurface which is exactly the fiber $p^{-1}(x)$. Consequently, $\mathcal{U}$ is the zero locus of the relative quadratic form $\mathfrak{q}$. %and its divisor class in $\mathbb{P}(V)$ is $2X_2$. 
Let $T_{\mathcal{U} /X}$ and $T_{\mathbb{P}(V)/X}$ denote the relative tangent bundles. We obtain the exact sequence 
\begin{align}\label{split2}
	0\rightarrow T_{\mathcal{U} /X}\rightarrow  T_{\mathbb{P}(V)/X}|_\mathcal{U}\rightarrow \oh_\mathcal{U} (2)\rightarrow 0,
\end{align}
where $\oh_\mathcal{U} (2)$ is the line bundle on $\mathcal{U}$ with $c_1(\oh_\mathcal{U} (2))=2X_2$.

\begin{proposition}\label{prop3-9}
	Every uniform bundle of rank $5$ on $\mathbb{Q}^5$ with splitting type $(2,2,1,0,0)$ splits.
\end{proposition}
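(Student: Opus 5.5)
We use the same strategy as for Propositions \ref{prop3-7} and \ref{prop3-8}, plus the geometry of $\mathcal{U}\subset\mathbb{P}(V)$ set up above.

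\textbf{Step 1: the filtration.} The relative H-N filtration of $p^*E$ gives
\[
0\rightarrow E_1\bigl(=q^*G_1\otimes p^*\oh_X(2)\bigr)\rightarrow E_2\rightarrow E_2/E_1\bigl(=q^*G_2\otimes p^*\oh_X(1)\bigr)\rightarrow 0,
\qquad
0\rightarrow E_2\rightarrow p^*E\rightarrow E_3(=q^*G_3)\rightarrow 0,
\]
where $G_1,G_3$ have rank $2$ and $G_2$ is a line bundle.

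\textbf{Step 2: Chern-class constraints.} Write $c_1(q^*G_i)=a_i(X_1+X_2)$ and parametrize the second Chern classes of $G_1,G_3$ as before. Imposing $c_t(p^*E)\equiv\mu_tX_1^t \pmod I$ and solving the resulting polynomial system should leave only $(a_1,a_2,a_3)=(0,0,0)$ and one genuinely new solution. We expect the latter to have $c_1(E_3)=X_2$ (up to the normalization), so that $E_3|_{p^{-1}(x)}$ is a rank $2$ bundle on $\mathbb{Q}^3$ of degree $1$.

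\textbf{Step 3: the trivial solution.} If all $a_i=0$, every graded piece is trivial on the $p$-fibres (by \cite[Lemma 3.1]{FLL}). The pushforwards are then uniform split bundles of lower rank, and $E$ splits exactly as in Proposition \ref{prop3-7}.

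\textbf{Step 4: the nontrivial solution (the main obstacle).} Here the Chern data suggest that on each fibre $p^{-1}(x)\cong\mathbb{Q}^3$ the quotient $E_3|_{p^{-1}(x)}$ behaves like a twisted spinor bundle, or like a quotient of $T_{\mathbb{P}(V)/X}|_{p^{-1}(x)}$. To identify it, we would use the sequence (\ref{split2}) together with the relative Euler sequence of $\mathbb{P}(V)$. Pushing these forward realizes the candidate quotient globally, in terms of $T_X(-1)=V$ and $\oh_X$.

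We would then project the filtration to $X$. If the fibrewise cohomology of $E_2$ vanishes in degrees $0$ and $1$, base change gives $E\cong p_*E_3$. We would identify $p_*E_3$ with a homogeneous bundle of the form $V\otimes\oh_X(a)$ or $\operatorname{Sym}$-type, using \cite[Proposition 10.13]{ott}. Its splitting type is $(1,0,-1,\dots)$, which differs from $(2,2,1,0,0)$, so this is a contradiction. If instead the fibrewise type of $E_2$ jumps, we would run the semicontinuity/support argument of Claims \ref{claim:1} and \ref{claim:one}. This produces an exact sequence $0\rightarrow E\rightarrow W\rightarrow F\rightarrow 0$ with $W$ homogeneous and $F$ torsion, and comparing $c_1$ should give $c_1(F)<0$, which is absurd.

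The hardest part is excluding this nontrivial solution: it requires controlling the fibre restrictions on $\mathbb{Q}^3$ via the classification in Theorem \ref{uniform Q3} and Fritzsche's theorem, and applying \cite[Lemma 4.1]{FLL} to glue them. Once this case is ruled out, only $(0,0,0)$ remains and $E$ splits.
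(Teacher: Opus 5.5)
Your Steps 1 and 3 agree with the paper. The proof is decided in Steps 2 and 4, and there the proposal both guesses the wrong configuration and relies on tools that are not available.

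\textbf{Step 2 leads to the wrong case.} The Chern-class system has no fibre-degree-one solution. Your candidate $(a_1,a_2)=(-1,0)$, with $c_2(q^*G_1)=c_2(q^*G_3)=\frac12(X_1^2+X_2^2)$ (fibre pieces are twisted spinor bundles, as you expect), does satisfy the conditions in degrees $\le 3$. But then $c_4(p^*E)=\frac{21}{4}X_1^4-X_1^3X_2+\frac32X_1^2X_2^2-X_1X_2^3+\frac14X_2^4$. The $X_1^3X_2$ term cannot be absorbed by the only degree-$4$ relation $X_1^4+X_1^2X_2^2+X_2^4$, so this candidate is ruled out. The nontrivial solution that actually survives is $a_1=-2$, $a_2=0$. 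It gives $E_2/E_1=p^*\oh_X(1)$, $c_1(E_3)=2(X_1+X_2)$, $c_2(E_3)=2(X_1^2+X_1X_2+X_2^2)$ and $c_2(E)=12X_1^2$. On a fibre, $E_3|_{p^{-1}(x)}$ is globally generated with $c_1=2X_2$ and $c_2=2X_2^2$. By \cite{BHM} it is therefore $l_x^*N(1)$, the pullback of a twisted null-correlation bundle under projection of $p^{-1}(x)\subset\mathbb{P}(V_x)$ from a point $[z_x]\notin p^{-1}(x)$. This bundle is not a twisted spinor bundle and not homogeneous. It is not even uniform on $\mathbb{Q}^3$: lines mapping to jumping lines of $N$ have type $(2,0)$, the others $(1,1)$. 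Hence Theorem~\ref{uniform Q3}, Fritzsche's theorem, \cite[Lemma 4.1]{FLL} and \cite[Proposition 10.13]{ott} cannot identify $G_3$ or $p_*E_3$ with any homogeneous bundle.

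\textbf{The contradictions you expect in Step 4 do not occur.}

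\emph{Constant case $E_2|_{p^{-1}(x)}\cong l_x^*T_{\mathbb{P}^3}(-2)$ for all $x$.} You do get $E\cong p_*E_3$, but this is no contradiction: $p_*E_3$ is an unknown bundle that tautologically has the splitting type of $E$. The paper needs a genuinely new argument here. The induced map $\wedge^2E\otimes\oh_X(-2)\to p_*\oh_{\mathcal{U}}(2)\cong\operatorname{Sym}^2V^{\vee}/\oh_X$ has constant rank $10$, by Schur's lemma for the $5$-dimensional $Sp(4)$-module $H^0(N(1))$. Its preimage in $\operatorname{Sym}^2V^{\vee}$ is fibrewise $\operatorname{Sym}^2W_x^{\vee}\oplus\mathbb{C}\mathfrak{q}_x$. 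The degeneracy locus of the polarization map then recovers the centres $[z_x]$ as a section of $\mathbb{P}(V)\to X$. This gives a line subbundle $L\subset T_X(-1)$ with $L^{\otimes 2}\cong\oh_X$, hence $L\cong\oh_X$, contradicting $H^0(T_X(-1))=0$.

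\emph{Jumping case.} Here $c_1(p_*E_3)$ must be computed by Grothendieck--Riemann--Roch, and it equals $5X_1=c_1(E)$. So the torsion cokernel has $c_1=0$, not $c_1<0$. The contradiction comes instead from $\det E\to\det p_*E_3$ being an isomorphism, which forces $R^1p_*E_2=0$ even though its support (the jumping locus) is nonempty.

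\emph{Constant split case $E_2|_{p^{-1}(x)}\cong l_x^*N'(-1)\oplus\oh$ for all $x$.} This case is missing from your proposal. The paper shows that $p_*E_2\to p_*p^*\oh_X(1)$ is an isomorphism and splits $p^*\oh_X(1)$ off $E_2$ via the adjunction counit. This reduces to a uniform $4$-bundle of type $(2,2,0,0)$, which splits by \cite[Theorem 4.3]{FLL}, contradicting $c_2(E)=12X_1^2$.
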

\begin{proof}
Let $E$ be a vector bundle on $X=\mathbb{Q}^5$ satisfying the conditions. The relative H‑N filtration of $p^*E$ induces
\begin{align}
	&0\rightarrow E_1(=q^*G_1\otimes p^*\oh_{X}(2))\rightarrow  E_2\rightarrow E_2/E_1(=q^*{G_2}\otimes p^*\oh_{X}(1))\rightarrow 0,\label{exA}\\
	&0\rightarrow E_2\rightarrow  p^*E\rightarrow E_3(=q^*{G_3})\rightarrow 0, \label{exB}
\end{align}
where $G_1,G_3$ are rank $2$ bundles and $G_2$ is a line bundle.
	Write $c_1(q^*G_1)=a_1(X_1+X_2)$, $c_1(q^*G_2)=a_2(X_1+X_2)$, so $c_1(E_3)=-(a_1+a_2)(X_1+X_2)$, and set
	$c_2(q^*G_1)=b_1(X_1^2+X_2^2)+b_2X_1X_2$, $c_2(E_3)=b_1'(X_1^2+X_2^2)+b_2'X_1X_2$. A direct computation using the condition $c_t(p^*E)\equiv\mu_tX_1^t \pmod{I}$ yields
	$a_1=a_2=0$ or $a_1=-2,~ a_2=0$. In the former case, $E$ splits. We next rule out the latter case. Suppose the latter holds, i.e., $a_1=-2,~ a_2=0$. Then $E_2/E_1=p^*\oh_{X}(1)$ and one further computes that $b_1=b_2=b_1'=b_2'=2$ and hence $c_1(E)=5X_1$, $c_2(E)=12X_1^2$. Moreover,
	for every fiber $p^{-1}(x)(\cong \mathbb{Q}^3)$, $E_3|_{p^{-1}(x)}$ is a rank $2$ bundle on $\mathbb{Q}^3$ with $c_1(E_3|_{p^{-1}(x)})=2X_2$, $c_2(E_3|_{p^{-1}(x)})=2X_2^2$ and it is globally generated. By \cite[Theorem 1.1]{BHM}, $E_3|_{p^{-1}(x)}\cong {l_x}^* N(1)$, where $N$ is a null correlation bundle on $\mathbb{P}^3$ and 
	${l_x}$ is the restriction of the linear projection $\mathbb{P}^4\setminus \{[z_x]\}\rightarrow \mathbb{P}^3$ to $p^{-1}(x)$ with $[z_x]\notin p^{-1}(x)$. Similarly, since $E_1^{\vee}|_{p^{-1}(x)}$ is a bundle on each $p$-fiber satisfying the same conditions, we have $E_1^{\vee}|_{p^{-1}(x)}\cong {l_x}^* N'(1)$ for some null correlation bundle $N'$, and hence $E_1|_{p^{-1}(x)} \cong  {l_x}^* N'(-1)$.
	
	Since $(l_x)_{*}\oh_{\mathbb{Q}^3}=\oh_{\mathbb{P}^3}\oplus \oh_{\mathbb{P}^3}(-1)$, the projection formula yields $(l_x)_*{l_x}^* N(1)\cong N(1)\otimes(l_x)_*\oh_{\mathbb{Q}^3}=N(1)\oplus N$. Given that $H^0(\mathbb{P}^3, N)=0$, we have
\[
H^0(p^{-1}(x), E_3|_{p^{-1}(x)})\cong H^0(\mathbb{P}^3, (l_x)_*{l_x}^* N(1))\cong H^0(\mathbb{P}^3, N(1))\oplus H^0(\mathbb{P}^3, N)=H^0(\mathbb{P}^3, N(1)).
\]
Since $h^0(\mathbb{P}^3, N(1))=5$, $p_{*}E_3$ is a vector bundle of rank $5$. Furthermore, for all $i>0$, the isomorphic groups $H^i(p^{-1}(x), E_3|_{p^{-1}(x)})\cong H^i(\mathbb{P}^3, (l_x)_*{l_x}^* N(1))$ vanish. Consequently, $R^ip_{*}E_3=0$ for all $i>0$. Restricting (\ref{exA}) to $p^{-1}(x)$, we get an exact sequence $\rightarrow {l_x}^* N'(-1)\rightarrow  E_2|_{p^{-1}(x)}\rightarrow \oh_{\mathbb{Q}^3}\rightarrow 0$. Since $\dim \mathrm{Ext}^1(\oh_{\mathbb{Q}^3},{l_x}^* N'(-1))=1$ and ${l_x}^*T_{\mathbb{P}^3}(-2)$ is a non-trivial extension of $\oh_{\mathbb{Q}^3}$ by ${l_x}^* N'(-1)$, it follows that
$E_2|_{p^{-1}(x)}$ is isomorphic to $ {l_x}^* N'(-1)\oplus \oh_{\mathbb{Q}^3}$ or ${l_x}^*T_{\mathbb{P}^3}(-2)$. 
We now rule out the case $a_1=-2,~ a_2=0$ by examining the following three cases, each of which will lead to a contradiction.\\

\textbf{Case 1.} There exists a point $x$ such that $E_2|_{p^{-1}(x)}\cong {l_x}^*T_{\mathbb{P}^3}(-2)$ and a point $y$ such that $E_2|_{p^{-1}(y)}\cong {l_y}^* N'(-1)\oplus \oh_{\mathbb{Q}^3}$.

Let $S_E:=\{y\in X\mid E_2|_{p^{-1}(y)}\cong{l_y}^* N'(-1)\oplus \oh_{\mathbb{Q}^3}\}$. Then the set $S_E$ is a non‑empty closed subset of $X$. As in Claim \ref{claim:one}, $S_E$ coincides with the support of $R^1p_*{E_2}$. Since $p_*{E_2}$ is torsion free and zero outside of $S_E$, we conclude that $p_*{E_2}$ vanishes, which yields the following short exact sequence:
\[
0\rightarrow E \rightarrow p_{*}E_3 \rightarrow R^1p_*{E_2}\rightarrow 0,
\]
where $E$ and $p_{*}E_3$ are both vector bundles of rank $5$. The monomorphism between them induces a monomorphism $\det E\hookrightarrow \det p_{*}E_3$. Next, we calculate the first Chern class of $p_{*}E_3$. Since $c_1(E_3)=2(X_1+X_2)$, $c_2(E_3)=2(X_1^2+X_2^2+X_1X_2)$, we have the Chern character
\[
ch(E_3)=2+2(X_1+X_2)+2X_1X_2+(-\frac{2}{3})(X_1^3+X_2^3)+(-\frac{2}{3})X_1X_2(X_1^2+X_2^2)+\cdots.
\]
Moreover, combining the relative Euler sequence $0\rightarrow\oh_{\mathbb{P}(V)} \rightarrow \bar{p}^*V\otimes\oh_{\mathbb{P}(V)}(1)\rightarrow T_{\mathbb{P}(V)/X}\rightarrow 0$ with (\ref{split2}), a direct computation gives the Todd class
\[
td(T_{\mathcal{U} /X})=1+\frac{3}{2}X_2+\frac{1}{12}(13X_2^2+X_1^2)+\frac{1}{24}(12X_2^3+3X_1^2X_2)+\frac{1}{720}(117X_2^4+3X_1^4+63X_1^2X_2^2)+\cdots.
\] 
 From the preceding analysis, we have $R^ip_{*}E_3=0$ for all $i>0$. Therefore, by Grothendieck-Riemann-Roch theorem, the Chern character $ch(p_{*}E_3)=p_*(ch(E_3).td(T_{\mathcal{U} /X}))$ in the Chow ring of $X$. Substituting the explicit forms of $ch(E_3)$ and $td(T_{\mathcal{U} /X})$ into this identity and applying the Segre class formula, one computes $c_1(p_{*}E_3)=5X_1=c_1(E)$. Therefore, the monomorphism $\det E\hookrightarrow \det p_{*}E_3$ is an isomorphism. This forces $E\cong p_{*}E_3$. Hence $R^1p_*{E_2}=0$. This contradicts the non‑emptiness of the support of $R^1p_*{E_2}$.\\

\textbf{Case 2.} For any $x\in X$, $E_2|_{p^{-1}(x)}$ is isomorphic to  ${l_x}^* N'(-1)\oplus \oh_{\mathbb{Q}^3}$.

Since for each $x\in X$ we have $h^0(\mathbb{Q}^{3},{l_x}^* N'(-1)\oplus \oh_{\mathbb{Q}^3})=h^1(\mathbb{Q}^{3},{l_x}^* N'(-1)\oplus \oh_{\mathbb{Q}^3})=1$, it follows from the base change theorem that $p_{*}E_2$ and $R^1p_{*}E_2$ are line bundles. Suppose that $p_{*}E_2=\oh_X(a)$ and $R^1p_{*}E_2=\oh_X(b)$. By performing $R^ip_*$ to (\ref{exB}), we have the exact sequence $0\rightarrow \oh_X(a) \rightarrow E \rightarrow p_{*}E_3\rightarrow \oh_X(b)\rightarrow 0 $. The previous argument gives $c_1(p_{*}E_3)=c_1(E)$, hence $a=b$. Applying $p_*$ to (\ref{exA}) yields the long exact sequence (noting that $p_*E_1=0$)
 \[
 0\rightarrow p_*E_2(=\oh_X(a)) \rightarrow p_*p^*\oh_{X}(1) \rightarrow R^1p_*E_1\rightarrow R^1p_{*}E_2(=\oh_X(b))\rightarrow 0.
 \]
 Since $a=b$, $R^1p_*E_1$ must be the line bundle $\oh_X(1)$. This forces $a=1$ and the induced morphism $p_*f\colon p_*E_2 \rightarrow p_*p^*\oh_{X}(1)$ is an isomorphism, where $f\colon E_2 \rightarrow p^*\oh_{X}(1)$ denotes the surjection in (\ref{exA}). Let $g\colon p^*p_*E_2\rightarrow E_2$ be the natural morphism, and consider its composition with $f$. By naturality of the adjunction counit, $f\circ g=\varepsilon_{p^*\mathcal O_X(1)}\circ p^*p_*f$, where $\varepsilon_{p^*\mathcal O_X(1)}\colon p^*p_*\big(p^*\mathcal O_X(1)\big)\to p^*\mathcal O_X(1)$ is the counit morphism. The projection formula ensures that $\varepsilon_{p^*\mathcal O_X(1)}$ 
 is an isomorphism. As \(p_*f\) is an isomorphism, so is \(p^*p_*f\). Hence \(f\circ g\) is an automorphism of the line bundle $p^*p_*E_2\cong p^*\mathcal O_X(1)$. Consequently, $f\circ g = c\cdot\operatorname{id}_{p^*\mathcal O_X(1)}$, $c\in \mathbb{C} \setminus \{0\}$. 
 Thus $f$ is a split surjection, which forces $E_2\cong E_1\oplus p^*\oh_{X}(1)$. 
 
Clearly, $p^*\oh_{X}(1)$ is a subbundle of $p^*E$. Let $K$ denote the quotient bundle. Then we have $K\cong p^*p_*K$ and 
the exact sequence $0\rightarrow \mathcal{O}_{X}(1)\rightarrow  E\rightarrow  p_*K\rightarrow 0$. Moreover, since $E_2/p^*\oh_{X}(1)\cong E_1$, the snake lemma yields an exact sequence $0\to E_1\to K\to E_3\to 0$, whose restriction to $q$-fibers looks as: $0\to \oh_{q^{-1}(l)}(2)^{\oplus 2}\to K|_{q^{-1}(l)}\to \oh_{q^{-1}(l)}^{\oplus 2}\to 0$. Thus over every line $q^{-1}(l)$, $K|_{q^{-1}(l)}\cong \oh_{q^{-1}(l)}^{\oplus 2}(2)\oplus \oh_{q^{-1}(l)}^{\oplus 2}$. Consequently, $p_*K$ is a uniform $4$-bundle on $X$ of splitting type $(2,2,0,0)$. By \cite[Theorem 4.3]{FLL}, $p_*K$ splits. Hence $E$ also splits, which contradicts $c_2(E)=12X_1^2$. We therefore rule out this case.\\

\textbf{Case 3.} For any $x\in X$, $E_2|_{p^{-1}(x)}$ is isomorphic to  ${l_x}^* T_{\mathbb{P}^3}(-2)$.

Since $h^0(\mathbb{Q}^{3},{l_x}^* T_{\mathbb{P}^3}(-2))=h^1(\mathbb{Q}^{3},{l_x}^* T_{\mathbb{P}^3}(-2))=0$, %(see Proposition \ref{prop1}), 
the direct images $p_{*}E_2$ and $R^1p_{*}E_2$ vanish. Applying $R^ip_*$ to (\ref{exB}), we obtain $E\cong p_{*}E_3$. This induces a morphism $\wedge^{2}E\cong \wedge^{2}(p_{*}E_3)\to p_{*}(\wedge^{2}E_3)=p_{*}(\det E_3)$.
The equality $c_1(E_3)=2(X_1+X_2)$ implies that $\det E_3=q^*\oh_{\mathcal{M}}(2)=p^*\oh_X(2)\otimes \oh_\mathcal{U} (2)$. 
Consequently, by the projection formula, $p_{*}(\det E_3)\cong\oh_X(2)\otimes p_{*}\oh_\mathcal{U} (2)$. Tensoring the above morphism with $\oh_X(-2)$, we get a morphism \[\delta\colon\wedge^{2}E\otimes\oh_X(-2)\to p_{*}\oh_\mathcal{U} (2).\]
Applying $\bar{p}_*$ to the exact sequence $ 0\rightarrow \oh_{\mathbb{P}(V)} \xrightarrow{\mathfrak{q}} \oh_{\mathbb{P}(V)}(2) \rightarrow \oh_\mathcal{U} (2)\rightarrow 0$, we obtain an exact sequence $ 0\rightarrow \oh_X \xrightarrow{\mathfrak{q}} \bar{p}_*\oh_{\mathbb{P}(V)}(2)(\cong \operatorname{Sym}^2 V^{\vee}) \rightarrow p_{*}\oh_\mathcal{U} (2)\rightarrow 0$. It follows that $p_{*}\oh_\mathcal{U} (2)\cong \operatorname{Sym}^2 V^{\vee}/\oh_X$. Let $\langle \mathfrak{q}\rangle$ denote the image of the injection $\mathfrak{q}$. Since
 $\mathfrak{q}_x$ is non‑degenerate for every $x\in X$, we have $\langle\mathfrak{q}\rangle\cong \oh_X$.
 
\textbf{Claim.} $\mathrm{Im}(\delta)\subset \operatorname{Sym}^2 V^{\vee}/\langle \mathfrak{q}\rangle$ is a subbundle of rank $10$. 

Proof. Let $V_x$ denote the fiber of $V=T_X(-1)$. The non-degenerate quadratic form $\mathfrak{q}_x$ on $V_x$ defines the quadric $p^{-1}(x)$. %Then the non‑degenerate quadratic form $\mathfrak{q}_x$ on $V_x$ defines a quadric, which is exactly the fiber $p^{-1}(x)$. 
As established at the beginning of the proof, $E_3|_{p^{-1}(x)} = {l_x}^* N(1)$, where
${l_x}$ is the restriction to $p^{-1}(x)$ of the linear projection $\mathbb{P}(V_x)\setminus \{[z_x]\}\rightarrow \mathbb{P}(V_x/\mathbb{C}z_x)$ and $[z_x]$ is
the center of projection. Since 
$[z_x]\notin p^{-1}(x)$, $\mathfrak{q}_x(z_x)\ne 0$. Therefore $\mathfrak{q}_x|_{\mathbb{C}z_x}$ is non-degenerate, which yields the orthogonal direct sum decomposition
 $V_x=\mathbb{C}z_x\oplus W_x$, where $W_x$ is the $\mathfrak{q}_x$-orthogonal complement of $\mathbb{C}z_x$. So $V_x/\mathbb{C}z_x$ is canonically identified with $W_x$.

For each point $x\in X$, the morphism $\delta$ induces a linear map on fibers $\wedge^{2}E_x\to (p_{*}\oh_\mathcal{U} (2))_x$. 
Since $E\cong p_{*}E_3$, we have $E_x\cong H^0(p^{-1}(x),E_3|_{p^{-1}(x)})\cong H^0(\mathbb{P}(W_x),N(1))$, which is a $5$-dimensional irreducible representation of $Sp(4)$ by the Borel-Bott-Weil theorem. Consequently, $\wedge^{2}E_x$ is a $10$-dimensional irreducible representation. It follows from Schur's lemma that the natural map
\[
\wedge^{2}E_x\cong \wedge^{2} H^0(\mathbb{P}(W_x),N(1)) \to H^0(\mathbb{P}(W_x),\det N(1))=H^0(\mathbb{P}(W_x),\oh_{\mathbb{P}(W_x)}(2))
\]
is an isomorphism, since the target is also a $10$-dimensional irreducible representation and the map is nonzero. Moreover, $(p_{*}\oh_\mathcal{U} (2))_x\cong H^0(p^{-1}(x),\oh_{p^{-1}(x)}(2))\cong H^0(\mathbb{P}(W_x),\oh_{\mathbb{P}(W_x)}(2)\oplus \oh_{\mathbb{P}(W_x)}(1))$. Since $\wedge^{2}E_x\cong H^0(\mathbb{P}(W_x),\oh_{\mathbb{P}(W_x)}(2))$ for every $x\in X$, $\delta$ is a morphism of constant rank $10$ and thus its image $\mathrm{Im}(\delta)\subset \operatorname{Sym}^2 V^{\vee}/\langle \mathfrak{q}\rangle$ is a subbundle of rank $10$.\\

Let $\mathfrak{S}$ be the preimage of $\mathrm{Im}(\delta)$ under the quotient map $\operatorname{Sym}^2 V^{\vee}\to \operatorname{Sym}^2 V^{\vee}/\langle \mathfrak{q}\rangle$.
Then $\mathfrak{S}$ fits into the exact sequence $0\rightarrow \oh_X(=\langle \mathfrak{q}\rangle) \rightarrow \mathfrak{S}\rightarrow \mathrm{Im}(\delta)\rightarrow 0$
 and its fiber over $x$ is $\mathrm{Sym}^2 W_x^{\vee}\oplus \mathbb{C}\mathfrak{q}_x$. Consider the pullback of $\mathfrak{S}$ along $\bar{p}$, followed by the relative polarization. At each point $u=(x,[v])\in \mathbb{P}(V)$, this polarization gives the map
 \[
 \gamma\colon \bar{p}^*\mathfrak{S}\longrightarrow \bar{p}^*{V^\vee}\otimes \mathcal O_{\mathbb{P}(V)}(1),\quad s\longmapsto \bigl(v\mapsto s(v,-)\bigr). 
 \]
 Let $D:=D_1(\gamma)=\{u=(x,[v])\in \mathbb{P}(V)\mid \mathrm{rk}(\gamma(u))\le 1\}$ be the degeneracy locus. We next prove that for a fixed point $x\in X$, $D_x=D\cap \mathbb{P}(V_x)$ is set‑theoretically the single point $(x,[z_x])$. Since $V_x=\mathbb{C}z_x\oplus W_x$, any vector $v\in V_x$ can be written as $v= a z_x + w$ with $a\in\mathbb{C}$ and $w\in W_x$.
 
 (1) Suppose $w\neq 0$. Then there exists $\ell\in W_x^\vee$ such that $\ell(w)\neq 0$. Consider $s=\ell^2\in \mathrm{Sym}^2W_x^\vee$, its associated symmetric bilinear form is given by $s(v,v')=\ell(v)\ell(v')$. Since $\ell(v)=\ell(a z_x + w)=\ell(w)\neq 0$, the linear functional $s(v,-)=\ell(w)\,\ell$ is non-zero. Varying $\ell$ over a basis of $W_x^\vee$, we see that $W_x^\vee\subset \mathrm{Im}(\gamma(u))$, hence $\mathrm{rk}(\gamma(u))\ge 4$. Thus $(x,[v])\notin D_x$.
 
 (2) Now suppose $w=0$, so $v= a z_x$. For every $s\in \mathrm{Sym}^2(W_x^\vee)$, we have $s(z_x,-)=0$, while $\mathfrak{q}_x(z_x,-)\neq 0$. Therefore $\mathrm{rk}(\gamma(u))= 1$.\\
 
 Combining (1) and (2), we obtain $\mathrm{supp}~D_x=\{(x,[z_x])\}$.
A local computation around $(x,[z_x])$ in suitable affine coordinates shows that the ideal sheaf of $D_x$ coincides with the maximal ideal of $(x,[z_x])$. Consequently, $D_x$ is a reduced point of length $1$ for any $x\in X$. Thus $\bar{p}|_{D}\colon D\to X$ is quasi-finite, and since it is proper, it is finite. On each fiber, the natural map $\eta\colon \oh_X\to (\bar{p}|_{D})_*\oh_D$ induces an isomorphism $\mathbb{C}\to \mathbb{C}$. By Nakayama's lemma, $\eta$ is surjective. Since $X$ is smooth and $\ker(\eta)$ is zero at the generic point, it must be zero. Thus $\eta$ is an isomorphism. As finite morphisms are affine, we have $D\cong X$. This gives a section $\sigma\colon X\to \mathbb{P}(V)$, equivalently a line subbundle $L\subset V(=T_X(-1))$. Precisely, $L=\sigma^*\mathcal O_{\mathbb{P}(V)}(-1)$, its fibre over $x\in X$ is the line $\mathbb{C}z_x$. Since $\mathfrak{q}_x|_{\mathbb{C}z_x}$ is non-degenerate, the restriction of $\mathfrak{q}$ gives a nowhere‑vanishing morphism
 \[
 \mathfrak{q}\colon L\otimes L\longrightarrow \mathcal O_X.
 \]
As any nowhere-vanishing homomorphism between line bundles is an isomorphism, we obtain $L\otimes L\cong \mathcal O_X$. Since $\mathrm{Pic}(X)\cong\mathbb Z$ has no $2$-torsion, the isomorphism $L^{\otimes 2}\cong\mathcal O_X$ forces $L\cong\mathcal O_X$. We thus obtain a trivial subbundle of $T_X(-1)$. However, $H^0(X,T_X(-1))=0$ by the Borel-Bott-Weil theorem, a contradiction. 

Combining the above, all three cases lead to contradictions. Hence $E$ must split.

\end{proof}

Combining Theorem \ref{uniform Q3}, Propositions \ref{prop3-2}–\ref{prop3-4}, \ref{prop3-6}–\ref{prop3-9}, we complete the proof of Theorem \ref{main1}. 

	\section{Uniform non-homogeneous bundles on $\mathbb{Q}^n$}
	
For any quadric	$\mathbb{Q}^n$ with $n\ge 3$, Proposition \ref{prop2} shows that every orthogonal kernel bundle $\mathcal{L}_{\mathrm{ker}}$ is uniform of splitting type $(0,\ldots,0,-1)$. In this section we prove that $\mathcal{L}_{\mathrm{ker}}$ is not homogeneous.
To begin with, the tangent bundle $T_{\mathbb{Q}^n}$ has splitting type $(2,1,\ldots,1,0)$. Meanwhile, \cite[Corollary 1.6]{ott1} states that every spinor bundle $\mathcal{S}$ is of splitting types $(1,\ldots,1,0,\ldots,0)$ with exactly $2^{\lfloor  \frac{n-3}{2}\rfloor}$ entries equal to $1$. Consequently, $\mathcal{L}_{\mathrm{ker}}$ cannot be isomorphic to any twist of $T_{\mathbb{Q}^n}$, $\mathcal{S}$ or their dual bundles.
	
	Recall that every homogeneous vector bundle over $G/P$ admits a filtration whose successive quotients are irreducible homogeneous bundles. If all these quotients are line bundles, the bundle splits, as line bundles are ACM. Hence, any unsplit homogeneous bundle must have at least one quotient of rank
	$\ge 2$. Accordingly, the rank of any unsplit homogeneous bundle is bounded below by
the minimal rank among irreducible homogeneous bundles of rank $\ge 2$. With this in mind, we proceed to compute the ranks of irreducible homogeneous bundles over quadrics of odd and even dimension separately. Let us first briefly review Weyl's formula.\\
	%Since every homogeneous vector bundle on $G/P$ admits a filtration by irreducible homogeneous bundles, the rank of any homogeneous bundle is evidently bounded below by the minimal rank among irreducible homogeneous bundles. Accordingly, we next compute the ranks of irreducible homogeneous bundles over odd-dimensional and even-dimensional quadrics separately. Let us first briefly review Weyl's formula. \\
	
	Let $G$ be a simple Lie group and $B$ a Borel subgroup of $G$. A classical formula of H. Weyl expresses the dimension of the irreducible representation $G_{\lambda}$ of $G$ with highest weight $\lambda$. Denote by $\Phi^{+}_G\subset \Phi_G$ the subset of positive roots with respect to some ordering of the root system and by $\Delta=\{\alpha_1,\ldots,\alpha_n\}$ a base for $\Phi^{+}_G$.
	%and $\Phi_G$ its root system. Denote by $\Phi^{+}_G\subset \Phi_G$ the subset of positive roots with respect to some ordering of the root system and by $\Delta$ a base for $\Phi^{+}_G$. %Let $\lambda_{1},\ldots,\lambda_{n}\in \Lambda$ be the {\it fundamental weights\/}, i.e., $\frac{2(\lambda_i,\alpha_j)}{(\alpha_j,\alpha_j)}=\delta_{ij}$ for $\alpha_{j}\in \Delta$, where $(,)$ denotes the Killing form. 
Set $\rho=\sum\limits_{i=1}^{n}\lambda_i$ (sum of all the fundamental weights) and let $(,)$ be the Killing form.
Then Weyl's formula is 
\[
\dim G_{\lambda}=\prod\limits_{\alpha\in \Phi^{+}_G}\frac{(\lambda+\rho,\alpha)}{(\rho,\alpha)}.
\]
	
	%By the Weyl formula, this minimal rank equals $\min\{E_{\lambda_2},E_{\lambda_3},\ldots,E_{\lambda_n}\}$, where each $E_{\lambda_i}$ is the irreducible homogeneous vector bundle with highest weight $\lambda_i$. It therefore suffices to compute the ranks of $E_{\lambda_i}$ to obtain the minimal possible rank of homogeneous bundles.
	\textbf{Odd-dimensional quadrics $\mathbb{Q}^{2n-1}\cong B_n/P_1~(n\ge 2)$:}
	
Recall that the Lie algebra of type $B_n$ is a simple Lie algebra %with the subset of positive roots is
 whose set of positive roots is
\[
\Phi^{+}_{B_n}=\{\alpha_i~(1\le i\le n),~ \alpha_i+\cdots+\alpha_j~(1\le i<j\le n),~ \alpha_i+\cdots+\alpha_{j-1}+2(\alpha_j+\cdots+\alpha_n)~(1\le i<j\le n)\}.
\] 
Let $\lambda_{1},\ldots,\lambda_{n}$ be the fundamental weights of $B_n$. Then 
\[
(\lambda_i,\alpha_j)=0,~\text{and}~(\lambda_i,\alpha_i)=1~\text{for}~1\le i\le n-1, ~(\lambda_n,\alpha_n)=\frac{1}{2}.
\]
Let $E_{\lambda}$ be the irreducible homogeneous vector bundle with highest weight $\lambda=\sum\limits_{i=1}^{n}a_i\lambda_i$ on $\mathbb{Q}^{2n-1}$, where $a_i\ge 0$ for $2\le i\le n$. Denote by $\pi$ the projection $ B_n/B\rightarrow B_n/P_1=\mathbb{Q}^{2n-1}$. According to \cite[Proposition 10.13]{ott}, $E_{\lambda}\cong\pi_{*}L_{\lambda}$, where $L_{\lambda}$ is a line bundle with weight $\lambda$ on $B_n/B$.  
Therefore, $\mathrm{rk} E_{\lambda}=\dim H^0(\pi^{-1}(x),L_{\lambda}|\pi^{-1}(x))$ for every $x\in \mathbb{Q}^{2n-1}$. Note that $\pi^{-1}(x)\cong B_{n-1}/B$. So by the Borel-Weil-Bott theorem, $H^0(\pi^{-1}(x),L_{\lambda}|\pi^{-1}(x))$ is the irreducible representation of $B_{n-1}$ with highest weight $\lambda'=\sum\limits_{k=1}^{n-1}a_{1+k}\lambda_k$. It follows from Weyl's formula that 
\begin{align*}
	\mathrm{rk} E_{\lambda}=&\prod\limits_{\alpha\in \Phi^{+}_{B_{n-1}}}\frac{(\lambda'+\rho,\alpha)}{(\rho,\alpha)}=\prod\limits_{1\le i<j\le n-1}\frac{\sum\limits_{k=i}^{j-1}a_{1+k}+j-i}{j-i}\times
	\prod\limits_{1\le i\le n-1}\frac{2n-1-2i+2\sum\limits_{k=2}^{n-2}a_{1+k}+a_n}{2n-1-2i}
	\times\\
	&\prod\limits_{1\le i<j\le n-1}\frac{2n-1-(i+j)+\sum\limits_{k=i}^{j-1}a_{1+k}+2\sum\limits_{k=j}^{n-2}a_{1+k}+a_n}{2n-1-(i+j)}.
\end{align*}
%In particular, one can readily compute that for each fundamental weight $\lambda_i~(2\le i\le n)$, the ranks of $E_{\lambda_i}$ are respectively\begin{align}\label{formula1}\mathrm{rk} E_{\lambda_i}=\frac{(2n-1)!}{(2n-i)!(i-1)!}~\text{for}~2\le i\le n-1, \quad\mathrm{rk} E_{\lambda_n}=2^{n-1}.\end{align}\\
In particular, a direct computation shows that the rank of \(E_{\lambda_i}\) for a fundamental weight \(\lambda_i\) is
\begin{align}\label{formula1}
	\mathrm{rk} E_{\lambda_i} = \frac{(2n-1)!}{(2n-i)!\,(i-1)!} \quad (2 \le i \le n-1), \qquad
	\mathrm{rk} E_{\lambda_n} = 2^{n-1}. 
\end{align}\\

\textbf{Even-dimensional quadrics $\mathbb{Q}^{2n}\cong D_{n+1}/P_1~(n\ge 3)$:}

Recall that %the Lie algebra of type $D_n$ is a simple Lie algebra with simple root system $\Delta=\{\alpha_1,\ldots,\alpha_n\}$. The subset of positive roots is 
\iffalse
\begin{align*}
\Phi^{+}_{D_{n+1}}=\{&\alpha_i~(1\le i\le n+1),~\alpha_i+\cdots+\alpha_{j-1}+2(\alpha_j+\cdots+\alpha_{n-1})+\alpha_n+\alpha_{n+1}~(1\le i<j\le n-1),\\
&\alpha_i+\cdots+\alpha_j~(1\le i<j\le n+1,i\ne n),~ \alpha_i+\cdots+\alpha_{n-1}+\alpha_{n+1}~(1\le i\le n-1)\}.
\end{align*}
\fi
\begin{align*}
	\Phi^{+}_{D_n}=\{&\alpha_i~(1\le i\le n),~\alpha_i+\cdots+\alpha_{j-1}+2(\alpha_j+\cdots+\alpha_{n-2})+\alpha_{n-1}+\alpha_n~(1\le i<j\le n-2),\\
	&\alpha_i+\cdots+\alpha_j~(1\le i<j\le n,i\ne n-1),~ \alpha_i+\cdots+\alpha_{n-2}+\alpha_n~(1\le i\le n-2)\}.
\end{align*}
Let $\lambda_{1},\ldots,\lambda_{n}$ be the fundamental weights of $D_n$. Then 
\[
(\lambda_i,\alpha_j)=0,~\text{and}~(\lambda_i,\alpha_i)=1~\text{for}~1\le i\le n.
\]
Let $E_{\lambda}$ be the irreducible homogeneous vector bundle with highest weight $\lambda=\sum\limits_{i=1}^{n+1}a_i\lambda_i$ on $\mathbb{Q}^{2n}$, where $a_i\ge 0$ for $2\le i\le n+1$. Denote $\lambda'=\sum\limits_{k=1}^{n}a_{1+k}\lambda_k$. 
Similar to the odd‑dimensional case, by Weyl's formula, we have
\begin{align*}
	\mathrm{rk} E_{\lambda}=&\prod\limits_{\alpha\in \Phi^{+}_{D_n}}\frac{(\lambda'+\rho,\alpha)}{(\rho,\alpha)}\\
	&=\prod\limits_{1\le i<j\le n-2}\frac{\sum\limits_{k=i}^{j-1}a_{1+k}+j-i}{j-i}\times
	\prod\limits_{1\le i<j\le n-2}\frac{2n-(i+j)+\sum\limits_{k=i}^{j-1}a_{1+k}+2\sum\limits_{k=j}^{n-2}a_{1+k}+a_n+a_{n+1}}{2n-(i+j)}
	\times\\
	&\prod\limits_{1\le i\le n-2}\frac{n-1-i+\sum\limits_{k=i}^{n-2}a_{1+k}}{n-1-i}\times\prod\limits_{1\le i\le n-2}\frac{n+1-i+\sum\limits_{k=i}^{n}a_{1+k}}{n+1-i}	\times\\
	&\prod\limits_{1\le i\le n-2}\frac{n-i+\sum\limits_{k=i}^{n-1}a_{1+k}}{n-i}\times\prod\limits_{1\le i\le n-2}\frac{n-i+\sum\limits_{k=i}^{n}a_{1+k}}{n-i}.
\end{align*}
%In particular, one computes for each fundamental weight $\lambda_i~(2\le i\le n+1)$, the ranks of $E_{\lambda_i}$ are respectively\begin{align}\label{formula2}\mathrm{rk} E_{\lambda_i}=\frac{(2n)!}{(2n+1-i)!(i-1)!}~\text{for}~2\le i\le n-1, \quad\mathrm{rk} E_{\lambda_n}=\mathrm{rk} E_{\lambda_{n+1}}=2^{n-1}.\end{align}
In particular, for each fundamental weight \(\lambda_i\) with \(2 \le i \le n+1\), the rank of \(E_{\lambda_i}\) is given by
\begin{align}\label{formula2}
	\mathrm{rk} E_{\lambda_i} = \frac{(2n)!}{(2n+1-i)!\,(i-1)!} \quad (2 \le i \le n-1), \qquad
	\mathrm{rk} E_{\lambda_n} = \mathrm{rk} E_{\lambda_{n+1}} = 2^{n-1}. 
\end{align}
	\begin{theorem}\label{non-homo}
		Every orthogonal kernel bundle $\mathcal{L}_{\mathrm{ker}}$ on $\mathbb{Q}^{n}~(n\ge 3)$ is a uniform non-homogeneous bundle of rank $n$. 
	\end{theorem}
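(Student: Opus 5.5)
Uniformity of rank $n$ is already given by Proposition \ref{prop2}(2) (splitting type $(0,\ldots,0,-1)$), and the rank is $\mathrm{rk}\,\mathcal{L}^{\perp}-1=n$. So the plan is to prove non-homogeneity by contradiction. Suppose $\mathcal{L}_{\mathrm{ker}}$ is homogeneous on $\mathbb{Q}^n$. Since it is stable (Proposition \ref{prop2}(1)), it is simple. I would first argue that it is in fact irreducible homogeneous, i.e.\ of the form $E_\lambda(t)$. A homogeneous bundle carries a filtration by irreducible homogeneous subquotients. Take the first step $0\to F_1\to \mathcal{L}_{\mathrm{ker}}$ with $F_1$ irreducible, and the last quotient $\mathcal{L}_{\mathrm{ker}}\to F_k$. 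Because the splitting type is $(0,\ldots,0,-1)$ and irreducible homogeneous bundles are uniform, a proper filtration forces each subquotient to have splitting type a subsequence of $(0,\ldots,0,-1)$. One can then compare slopes: stability forces $\mu(F_1)<\mu(\mathcal{L}_{\mathrm{ker}})=-1/n$. Hence the subbundle $F_1$ must contain the $-1$ entry, i.e.\ $c_1(F_1)=-1$ with rank $<n$, while the quotients have splitting type $(0,\ldots,0)$ and are therefore trivial.

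In that reducible case we get $F_1\subset \mathcal{L}_{\mathrm{ker}}$ with quotient $\mathcal{O}^{\oplus m}$ (after iterating the extensions, each of which is an extension by trivial bundles). Dualizing gives $\mathcal{O}^{\oplus m}\hookrightarrow \mathcal{L}_{\mathrm{ker}}^{\vee}$, so $F_1^{\vee}$ is a uniform homogeneous bundle of splitting type $(1,0,\ldots,0)$ of rank $r<n$. Now use Proposition \ref{prop2}(5): $h^0(\mathcal{L}_{\mathrm{ker}}^\vee)=n+1$. On the other side, $\mathcal{L}_{\mathrm{ker}}$ cannot be split, since its Chern classes in Proposition \ref{prop2}(3) give $c_i=(-1)^iX_1^i\neq 0$ for all $i\le n$, which is impossible for $\mathcal{O}(-1)\oplus\mathcal{O}^{n-1}$. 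So $F_1$ is a non-split irreducible bundle, i.e.\ of the form $E_\lambda(t)$ with $\mathrm{rk}\ge 2$.

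The key step is the rank count using (\ref{formula1}) and (\ref{formula2}). Any irreducible homogeneous bundle of rank $\ge 2$ has rank at least $\min_i \mathrm{rk}E_{\lambda_i}$. This minimum is $2n-1$ (the tangent bundle, $i=2$) or $2^{n-1}$ (spinor) on $\mathbb{Q}^{2n-1}$, and analogously on $\mathbb{Q}^{2n}$, with small cases $n=3,4$ handled separately. Monotonicity of Weyl's formula in the $a_i$ shows that non-fundamental weights give larger ranks. So an irreducible piece of rank $\le n$ must be a twist of the spinor bundle $\mathcal{S}$ or $\mathcal{S}^\vee$ (when $2^{\lfloor (n-1)/2\rfloor}\le n$), of $T_{\mathbb{Q}^n}$, or, for $\mathbb{Q}^4\cong Gr(2,4)$, of the universal bundles. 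The paragraph preceding the theorem already excludes $\mathcal{L}_{\mathrm{ker}}$ itself being a twist of $T$, $\mathcal{S}$, or their duals, since the splitting types differ.

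It remains to exclude the reducible case with $F_1$ a twisted spinor-type bundle of splitting type $(0,\ldots,0,-1)$. By \cite[Corollary 1.6]{ott1} the spinor splitting type $(1,\ldots,1,0,\ldots,0)$ has $2^{\lfloor (n-3)/2\rfloor}$ ones. A twist with exactly one $-1$ then requires spinor rank $2$, which happens for $n=3,4$ only. In those cases $\mathcal{L}_{\mathrm{ker}}$ would be an extension of $\mathcal{O}^{\oplus(n-2)}$ by $\mathcal{S}(-1)$. Its Chern classes would then be those of $\mathcal{S}(-1)$, namely $c_2=\tfrac12X_1^2$, which contradicts $c_2(\mathcal{L}_{\mathrm{ker}})=X_1^2$. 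I expect the main obstacle to be the filtration/slope step, i.e.\ ensuring that all quotients are forced to be trivial and that $\mathrm{Ext}$ issues do not matter; the Chern class comparison sidesteps the extension classes entirely.
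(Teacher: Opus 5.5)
\textbf{There is a genuine gap in your reduction step.} You claim that in a homogeneous filtration of a bundle with splitting type $(0,\ldots,0,-1)$, every irreducible subquotient has a splitting type that is a subsequence of $(0,\ldots,0,-1)$. This is false. Restricting to a line only tells you that the first subbundle $F_1$ has all entries $\le 0$ and that the last quotient has all entries $\ge -1$.

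A counterexample arises in exactly the situation of the theorem. On $\mathbb{Q}^4\cong G(2,4)$, take the tautological sequence $0\to E_{\lambda_i}(-1)\to \mathcal{O}^{\oplus 4}\to E_{\lambda_j}\to 0$ with $i\neq j$. It is a non-split homogeneous filtration of a bundle of splitting type $(0,0,0,0)$, yet its factors have splitting types $(0,-1)$ and $(1,0)$.

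Two of your conclusions therefore lose their justification:
\begin{itemize}
\item ``$F_1$ carries the $-1$ and all other factors are trivial'';
\item ``a spinor factor is impossible unless some twist of it has exactly one $-1$''.
\end{itemize}
The second is precisely how you dispose of $\mathbb{Q}^5$ and $\mathbb{Q}^6$. There, a homogeneous rank-$n$ bundle could a priori be built from $\mathcal{S}(a)$ and line bundles $\mathcal{O}(b)$, in any order and with any twists, and nothing in your argument excludes this. On $\mathbb{Q}^3$ and $\mathbb{Q}^4$ you likewise treat only the one configuration $F_1=\mathcal{S}(-1)$ with trivial quotient.

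Your rank count has a related flaw. ``Non-fundamental weights give larger ranks'' does not imply rank $>n$. On $\mathbb{Q}^4$, the bundles $\mathrm{Sym}^2E_{\lambda_i}$ (rank $3$) and $\mathrm{Sym}^3E_{\lambda_i}$ (rank $4$) are irreducible of rank $\le 4$ and are missing from your list. A twist $\mathrm{Sym}^3E_{\lambda_i}(t)$ must be excluded explicitly, which is easy from its splitting type $(3,2,1,0)+t$. The bundle $\mathrm{Sym}^2E_{\lambda_i}$ can also occur as a filtration factor.

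\textbf{How to repair it.} The repair is essentially the paper's argument, and it needs no splitting-type bookkeeping for the factors.

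For $n\ge 7$, the rank count alone suffices. Every irreducible homogeneous bundle of rank $\ge 2$ has rank $\ge n$, with equality only for twists of $T_{\mathbb{Q}^n}$ (and, when $n=8$, also of the spinor bundles). Hence an unsplit homogeneous bundle of rank $n$ is irreducible, and the splitting type then rules it out.

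For $n=3,4,5,6$, you must treat every filtration that contains a spinor factor, and on $\mathbb{Q}^4$ also a $\mathrm{Sym}^2$-factor, using $\mathrm{Ext}^1$-vanishing. Spinor bundles and line bundles are ACM, and $\mathcal{S}^\vee$ is a twist of $\mathcal{S}$, so all such extensions split. A splitting contradicts the indecomposability of the stable bundle $\mathcal{L}_{\mathrm{ker}}$ — a property you noted but never used.

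On $\mathbb{Q}^4$ you additionally need to compute $\mathrm{Ext}^1(E_{\lambda_j}(b),E_{\lambda_i}(a))$. The only non-split case is $i\neq j$, $b=a+1$, and it yields $\mathcal{O}(a+1)^{\oplus 4}$, which again is not $\mathcal{L}_{\mathrm{ker}}$. Your closing Chern-class comparison is sound, but it covers only the single configuration you considered.
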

	\begin{proof}
By Weyl's formula, any irreducible homogeneous bundle $E_{\lambda}$ on $\mathbb{Q}^{2n-1}$ and $\mathbb{Q}^{2n}$ of rank at least $2$ satisfies
\[
\mathrm{rk}(E_{\lambda})\ge \min\{\mathrm{rk}(E_{\lambda_2}), \ldots,\mathrm{rk}(E_{\lambda_{n-1}}), \mathrm{rk}(E_{\lambda_n})\}.
\]
By Formulas \ref{formula1} and \ref{formula2}, $E_{\lambda_2}$ is the bundle of minimal rank among $\{E_{\lambda_i}\}_{2\le i\le n-1}$, so we further obtain $\mathrm{rk}(E_{\lambda})\ge \min\{\mathrm{rk}(E_{\lambda_2}), \mathrm{rk}(E_{\lambda_n})\}$.
For the quadric $\mathbb{Q}^{2n-1}$ with $n\ge 4$, we have $\mathrm{rk}(E_{\lambda_2})=2n-1<2^{n-1}=\mathrm{rk}(E_{\lambda_n})$, which implies $\mathrm{rk}(E_{\lambda})\ge 2n-1$. The equality holds if and only if $E_{\lambda}$ is a twist of the tangent bundle $E_{\lambda_2}$. Thus $\mathcal{L}_{\mathrm{ker}}$ cannot be a homogeneous bundle on $\mathbb{Q}^{2n-1}~(n\ge 4)$. The same reasoning applies to $\mathbb{Q}^{2n}$ for $n\ge 5$. We now consider the remaining cases.\\

For $\mathbb{Q}^{2n-1}$ with $n=3$, we have $\mathrm{rk}(E_{\lambda_3})=4<5=\mathrm{rk}(E_{\lambda_2})$, which implies $\mathrm{rk}(E_{\lambda})\ge 4$. The equality holds if and only if $E_{\lambda}$ is a twist of the spinor bundle $E_{\lambda_3}$. Let $E$ be an unsplit homogeneous bundle on $\mathbb{Q}^5$ of rank $5$. Then up to twisting a line bundle, $E$ is either the tangent bundle $E_{\lambda_2}$, or it has a filtration of length two with $E_{\lambda_3}$ as a factor. In the latter case, $E$ is isomorphic to the direct sum of $E_{\lambda_3}$ and a line bundle, as $E_{\lambda_3}$ is an ACM bundle. It follows that $\mathcal{L}_{\mathrm{ker}}$ cannot be a homogeneous bundle on this quadric. Likewise, $\mathcal{L}_{\mathrm{ker}}$ is not a homogeneous bundle on $\mathbb{Q}^3$.

For $\mathbb{Q}^{2n}$ with $n=4$, we have $\mathrm{rk}(E_{\lambda_4})=\mathrm{rk}(E_{\lambda_2})=8$, which implies $\mathrm{rk}(E_{\lambda})\ge 8$. The equality holds if and only if $E_{\lambda}$ is a twist of either the tangent bundle or a spinor bundle. Clearly, $\mathcal{L}_{\mathrm{ker}}$ is not a homogeneous bundle on $\mathbb{Q}^8$. 

For $\mathbb{Q}^6$, since $\mathrm{rk}(E_{\lambda_3})=4<6=\mathrm{rk}(E_{\lambda_2})$, we have $\mathrm{rk}(E_{\lambda})\ge 4$. The equality holds if and only if $E_{\lambda}$ is a twist of a spinor bundle. Let $E$ be an unsplit homogeneous bundle on $\mathbb{Q}^6$ of rank $6$. Then up to twist, 
$E$ is either the tangent bundle or has a filtration of length three with a spinor bundle as a factor. If the latter holds, then $E$ decomposes into a direct sum of a spinor bundle and two line bundles, as every spinor bundle is ACM. Consequently, $\mathcal{L}_{\mathrm{ker}}$ cannot be a homogeneous bundle on $\mathbb{Q}^6$ either.\\
%by irreducible homogeneous bundles. In the latter case, as $E$ is unsplit, one of the filtration factors must be a spinor bundle (up to dual and twist), and the remaining filtration factors must be line bundles or direct sums of line bundles. Since every spinor bundle is ACM, $E$ is isomorphic to the direct sum of a spinor bundle and line bundles. \\

It remains to consider the case of $\mathbb{Q}^4$. Note that $\mathbb{Q}^4 \cong G(2,4)=A_3/P_2$ and the spinor bundles $E_{\lambda_1}$ and $E_{\lambda_3}$ on $\mathbb{Q}^4$ are exactly the dual of the universal subbundle and quotient bundle on $G(2,4)$. Both are rank $2$ irreducible homogeneous bundles of minimal rank apart from line bundles. For $i=1,3$, $\mathrm{Sym}^kE_{\lambda_i}$ is an irreducible homogeneous bundle of rank $k+1$ with splitting type $(k,\ldots,2,1,0)$.
Let $E$ be an unsplit homogeneous $4$-bundle over $\mathbb{Q}^4$. If $E$ is irreducible, then it is a twist of $E_{\lambda_1+\lambda_3}$, $\mathrm{Sym}^3E_{\lambda_1}$ or $\mathrm{Sym}^3E_{\lambda_3}$. Now suppose $E$ is reducible. Its filtration has length $2$ or $3$.  For length $3$, a spinor bundle occurs as a filtration factor, and $E$ splits as the direct sum of this spinor bundle and two line bundles. For length $2$, $E$ fits into an exact sequence 
$
0\rightarrow  F\rightarrow E \rightarrow E/F \rightarrow 0$.

If $\mathrm{rk}(F)=3$ (resp. $1$), then $F$ (resp. $E/F$) is a twist of $\mathrm{Sym}^2E_{\lambda_i}$ for some $i~(i=1,3)$. By the Borel-Bott-Weil theorem, $H^1(\mathbb{Q}^4,\mathrm{Sym}^2E_{\lambda_i}(a))$ and $H^3(\mathbb{Q}^4,\mathrm{Sym}^2E_{\lambda_i}(a))$ vanish for all $a\in \mathbb{Z}$, which forces $E$ to decompose as the direct sum of a twist of  $\mathrm{Sym}^2E_{\lambda_i}$ and a line bundle. If $\mathrm{rk}(F)=2$, then we have $F\cong E_{\lambda_i}(a)$ and $E/F \cong E_{\lambda_j}(b)$. We now compute the extension group
\begin{align}\label{ext}
\mathrm{Ext}^1(E_{\lambda_j}(b),E_{\lambda_i}(a)) \cong H^1(E_{\lambda_i}\otimes E_{\lambda_j}(a-1-b)).
\end{align}

Suppose first that $i = j$. By the Littlewood-Richardson rule, $E_{\lambda_i}\otimes E_{\lambda_i}\cong \mathrm{Sym}^2E_{\lambda_i}\oplus \wedge^2E_{\lambda_i}$. 
Combined with the vanishing $H^1(\mathbb{Q}^4,\mathrm{Sym}^2E_{\lambda_i}(t))=0$ for any integer $t$ and the fact that every line bundle is ACM, we conclude that
the extension group (\ref{ext}) vanishes, so $E\cong E_{\lambda_i}(a)\oplus E_{\lambda_i}(b)$.
Now assume $i\neq j$. Using the Borel-Bott-Weil formula again, we get the extension group (\ref{ext}) fails to vanish if and only if $a-b = -1$.
Moreover, $h^1(E_{\lambda_i}\otimes E_{\lambda_j}(-2))=1$, which implies that non-trivial extensions of $E_{\lambda_j}(a+1)$ by $E_{\lambda_i}(a)$ are unique up to isomorphism. Notice that 
there exists a canonical non-trivial extension
$0\to E_i(a)\to \oh^{\oplus 4}_{\mathbb{Q}^4}(a+1)\to E_j(a+1)\to 0$. Consequently, if $i\neq j$, $E$ is isomorphic to either $E_{\lambda_i}(a)\oplus E_{\lambda_j}(b)$ or $\oh^{\oplus 4}_{\mathbb{Q}^4}(a+1)$ for integers $a,b$. We thus characterize all rank $4$ homogeneous bundles on $\mathbb{Q}^4$. Since $\mathcal{L}_{\mathrm{ker}}$ is stable with splitting type $(-1,0,0,0)$, it cannot be homogeneous. This concludes the proof.
\end{proof}

\begin{remark}
%In \cite{AZ}, Amram and Zhang proved that the threshold $k(\mathbb{Q}^N)<3N-8$ for homogeneity of uniform bundles on $\mathbb{Q}^N$ whenever $N\ge 4$.
Theorem \ref{non-homo} yields that $k(\mathbb{Q}^{2n-1})=2n-2$ and $2n-2\le k(\mathbb{Q}^{2n})\le 2n-1$, thereby providing a complete solution to Problem \ref{prob} for odd-dimensional quadrics. Moreover, for $n\ge 4$, the bundles $\mathcal{L}_{\mathrm{ker}}$ on $\mathbb{Q}^{2n-1}$ provide the first known examples of unsplit uniform bundles of minimal rank on generalized Grassmannians that fail to be homogeneous.
  
%Since every uniform bundle on $\mathbb{Q}^{2n-1}$ of rank at most $2n-2$ is homogeneous, Theorem \ref{non-homo} yields that the  threshold $k(\mathbb{Q}^{2n-1})$ for homogeneity of uniform bundles equals $2n-2$, thereby providing a complete solution to Problem \ref{prob} for odd-dimensional quadrics. Moreover, for $n\ge 4$, the bundles $\mathcal{L}_{\mathrm{ker}}$ on $\mathbb{Q}^{2n-1}$ provide the first known examples of unsplit uniform bundle of minimal ranks on generalized Grassmannians that fail to be homogeneous.
\end{remark}

	\section{The characterization of projective spaces}
	
Motivated by the bundles $\mathcal{L}_{\mathrm{ker}}$ on $\mathbb{Q}^n$, in this section we construct a family of vector bundles on arbitrary generalized Grassmannians not isomorphic to $\mathbb{P}^n$, and show that they are uniform but not homogeneous.
	
\begin{ex}[A class of uniform bundles $E$ on a generalized Grassmannian $X$]\label{ex:class-E}
	
Let $X$ be a generalized Grassmannian which is not isomorphic to a projective space. Then $X$ admits an embedding into $\mathbb{P}^N$ via the very ample line bundle $\mathcal{O}_X(1)$, and clearly $N > n:=\dim X$. Set $F_0:=T_{\mathbb{P}^N}(-1)|_X$.
Since $T_{\mathbb{P}^N}(-1)$ is generated by global sections, so is $F_0$.
We construct a sequence of quotient bundles $F_k$ of $F_0$ by iteration as follows.
Suppose $F_k$ has been constructed and $\mathrm{rk}(F_k)>n$. Since $F_k$ is globally generated, it has a general nowhere-vanishing global section $s_k$ by \cite[Corollary 5.5]{EH}. The section induces a monomorphism
\[
\mathcal{O}_X\xrightarrow{.s_k} F_k.
\]
Denote the quotient bundle by $F_{k+1}$.
Then $F_{k+1}$ is globally generated and has rank one less than that of $F_k$. The process can be repeated while  $\mathrm{rk}(F_{k+1})>n$. Since $\mathrm{rk}(F_k)=N$, after $N-n$ steps we obtain a globally generated quotient bundle $E:=F_{N-n}$ of rank $n$.
Since $X$ is a generalized Grassmannian, every line on $X$ is a line in $\mathbb{P}^N$. As $T_{\mathbb{P}^N}(-1)$ is uniform on $\mathbb{P}^N$ of splitting type $(1,0,\dots,0)$, the restriction $F_0$ is a uniform bundle on $X$ with the same
splitting type. Thus $E$ is uniform on $X$ of splitting type $(1,0,\dots,0)$.

Note that the above construction depends on the choices of the general sections $s_k$ at each step. Different choices may lead to non-isomorphic quotient bundles. Thus we obtain
a class of uniform bundles $E$.
\end{ex}

When $X$ is a smooth quadric, the family of bundles $E$ constructed in Example \ref{ex:class-E} coincides with the dual of the orthogonal bundles $\mathcal{L}_{\mathrm{ker}}$, which are known to be non-homogeneous. In this section we adopt a different approach to prove the non-homogeneity of the bundles $E$ constructed in Example \ref{ex:class-E}. 

\begin{theorem}\label{non-homo2}
Let $X=G/P_k$ be a generalized Grassmannian of dimension $n$. Suppose that $X$ is not isomorphic to $\mathbb{P}^n$. Then every uniform $n$-bundle on $X$ constructed in Example \ref{ex:class-E} is non‑homogeneous.
\end{theorem}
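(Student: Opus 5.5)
Assume $E$ is homogeneous and derive a contradiction from global sections and $G$-equivariance. By construction we have a short exact sequence
\[
0\rightarrow \mathcal{O}_X^{\oplus (N-n)}\rightarrow T_{\mathbb{P}^N}(-1)|_X\rightarrow E\rightarrow 0,
\]
obtained by composing the successive quotients; the kernel is an extension of trivial bundles, hence trivial, since $H^1(X,\mathcal{O}_X)=0$. Dualizing gives $0\to E^\vee\to \Omega_{\mathbb{P}^N}(1)|_X\to \mathcal{O}_X^{\oplus(N-n)}\to 0$.

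\emph{Step 1: cohomology.} Restricting the Euler sequence gives $0\to\Omega_{\mathbb{P}^N}(1)|_X\to V^\vee\otimes\mathcal{O}_X\to\mathcal{O}_X(1)\to 0$, with $V^\vee=H^0(\mathbb{P}^N,\mathcal{O}(1))\cong H^0(X,\mathcal{O}_X(1))$ because $X$ is linearly normal. Hence $H^0(\Omega_{\mathbb{P}^N}(1)|_X)=H^1(\Omega_{\mathbb{P}^N}(1)|_X)=0$, and therefore $H^0(E^\vee)=0$ and $h^1(E^\vee)=N-n$. Dually, $H^0(E)$ is a quotient of $V$ of dimension $N+1-(N-n)=n+1$, and $E$ is globally generated.

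\emph{Step 2: evaluation map.} If $E$ were homogeneous, $H^0(X,E)$ would be a $G$-representation of dimension $n+1$, and the evaluation map $H^0(E)\otimes\mathcal{O}_X\to E$ would be a surjective $G$-equivariant map. Its kernel would be a homogeneous line bundle $\mathcal{O}_X(-d)$; comparing first Chern classes ($c_1(E)=c_1(T_{\mathbb{P}^N}(-1)|_X)=X_1$) gives $d=1$. We would then get a $G$-equivariant embedding $\mathcal{O}_X(-1)\hookrightarrow H^0(E)\otimes\mathcal{O}_X$, i.e.\ a $G$-equivariant morphism $\varphi\colon X\to\mathbb{P}(H^0(E)^{\vee})\cong\mathbb{P}^n$ with $\varphi^*\mathcal{O}(1)=\mathcal{O}_X(1)$. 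Concretely, this is the linear projection of $X\subset\mathbb{P}^N$ from the $(N-n-1)$-plane spanned by the chosen sections, and it is a morphism because the sections are nowhere vanishing on $X$.

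\emph{Step 3: contradiction.} $\varphi$ is finite, being the restriction of a linear projection given by an ample line bundle, hence surjective onto $\mathbb{P}^n$ since $\dim X=n$. It is $G$-equivariant for the action of $G$ on $\mathbb{P}^n$ through the representation $H^0(E)^\vee$. Fibres of $\varphi$ over points of the open orbit are permuted transitively, and $G$ acts on $\mathbb{P}^n$ with $X$ mapping onto it, so $\mathbb{P}^n$ is a single $G$-orbit, namely $G/P'$ with $P\subset P'$. Then $\varphi\colon G/P\to G/P'$ is a finite equivariant map between flag varieties with connected fibres, hence an isomorphism, and $X\cong\mathbb{P}^n$, contradicting the hypothesis. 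If one prefers to avoid the orbit argument, note that $\deg\varphi=\deg X>1$ (since $\deg\mathcal{O}_X(1)^n=\deg X$ and $X\neq\mathbb{P}^n$), while an equivariant finite morphism $G/P\to G/P'$ has connected fibres.

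\textbf{Main obstacle.} The delicate point is Step 2: one must ensure that the $G$-linearization on $E$ induces a linearization compatible with the evaluation map, and that the kernel line bundle is $G$-linearized. This holds after passing to the simply connected cover of $G$, which acts on every line bundle. The connected-fibre statement for equivariant maps between $G/P$ and $G/P'$ (the fibre is $P'/P$) then finishes the argument.
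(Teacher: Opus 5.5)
Your argument is correct and follows essentially the same route as the paper. Homogeneity makes $H^0(X,E)$ a $G$-module and the evaluation map $G$-equivariant, which produces a finite surjective $G$-equivariant morphism $X\to\mathbb{P}^n$ pulling back $\mathcal{O}(1)$ to $\mathcal{O}_X(1)$; its target must then be a single orbit $G/P'$ with $P\subset P'$, which forces $X\cong\mathbb{P}^n$.

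The differences are only cosmetic. The paper gets finiteness from Picard number one and $P'=P$ from maximality of $P_k$, whereas you use ampleness of $\varphi^*\mathcal{O}(1)=\mathcal{O}_X(1)$ and connectedness of the fibre $P'/P$. You also supply details the paper omits: the triviality of the kernel via $H^1(\mathcal{O}_X)=0$, and the description of $\varphi$ as the linear projection from the span of the chosen sections.
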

\begin{proof}
Let $X \hookrightarrow \mathbb{P}^N$ be the embedding induced by $\mathcal{O}_X(1)$ and let $E$ be a uniform bundle constructed in Example \ref{ex:class-E}. Then $E$ fits into the exact sequence
\[
0\rightarrow \oh_X^{\oplus N-n} \rightarrow  T_{\mathbb{P}^N}(-1)|_X\rightarrow E\rightarrow 0.
\]
Taking cohomology, we obtain $h^0(X,E)=h^0(T_{\mathbb{P}^N}(-1)|_X)-(N-n)=n+1$. Since $E$ is  globally generated, the evaluation morphism $\mathrm{ev}\colon H^0(X,E)\otimes \oh_X\to E$ is surjective and its kernel is the line bundle $\oh_X(-1)$. By the universal property of projective spaces, there exists a unique 
 morphism $\Phi\colon X\to  Y:=\mathbb{P}(H^0(X,E))$ such that $\Phi^*\oh_Y(-1)=\mathcal \oh_X(-1)$. Suppose, for contradiction, that $E$ is homogeneous. By \cite[Lemma 9.8]{ott}, $H^0(X,E)$ is a $G$-module, so the evaluation map $\mathrm{ev}$ is $G$-equivariant, which implies that $\Phi$ is also $G$-equivariant. Since $\Phi$ is non-constant and $X$ has Picard number $1$, $\Phi$ must be finite. Furthermore, as $\dim X=\dim Y$,
%the dimensions of the domain and the codomain are equal, 
$\Phi$ must be surjective. Since $\Phi$ is surjective and $G$-equivariant, the transitivity of $G$ on $X$ implies transitivity on $Y$. So $Y$ is a homogeneous space $G/Q$ for some parabolic subgroup $Q$. %, where $Q$ denotes the stabilizer in $G$ of some point $y\in Y$. 
 Thus $\Phi$ is a $G$-equivariant morphism from $G/P_k$ to $G/Q$, and evidently $P_k\subset Q$. Since $P_k$ is a maximal parabolic subgroup, we have $P_k=Q$. Consequently,
$X= G/P_k = G/Q \cong \mathbb{P}^n$, which contradicts the hypothesis. This completes the proof.
\end{proof}
	
	As a consequence, we obtain the bound 
	$k(X)\le \dim X-1$ for every generalized Grassmannian $X$ not isomorphic to a projective space. This gives Theorem \ref{main2}.\\

It is worth noting that $k(\mathbb{P}^n)\ge n+1$, while for generalized Grassmannians $X\not\cong\mathbb P^n$ we have $k(X)\le \dim X-1$. In view of this numerical discrepancy, it is natural to ask whether projective spaces can be characterized in terms of uniform bundles. The following theorem provides an affirmative answer.
	
%It is well known that for arbitrary generalized Grassmannian $X$, the tangent bundle $T_X$ is uniform with splitting type $(2,1^{\times p},0,\ldots,0)$, where $p=c_1(T_X)-2$. In \cite{OCW}, Occhetta, Sol\'{a} Conde and Watanabe characterize generalized Grassmannians among Fano manifolds of Picard number one via stronger uniformity conditions on a family of minimal rational curves. In this section, we give a simple characterization of projective spaces among all generalized Grassmannians from the viewpoint of uniform bundles.
	
	\begin{theorem}
	Let $X$ be a generalized Grassmannian of dimension $n\ge 2$. Then $X\cong\mathbb{P}^n$ if and only if $T_X$ is the unique unsplit uniform bundle on $X$ of minimal rank, up to dual and twist.
	\end{theorem}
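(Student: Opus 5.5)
Both directions rest on what is already established: the classical classification of low-rank uniform bundles on $\mathbb{P}^n$ for one implication, and Example \ref{ex:class-E} with Theorem \ref{non-homo2} for the other.

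\emph{Forward direction.} Assume $X\cong\mathbb{P}^n$. Van de Ven, Sato and Elencwajg--Hirschowitz--Schneider showed that every uniform bundle on $\mathbb{P}^n$ of rank $r<n$ splits. For rank $n$, every uniform bundle either splits or is a twist of $T_{\mathbb{P}^n}$ or $\Omega_{\mathbb{P}^n}$. This holds for all $n\ge 2$; for $n=2$ it is Van de Ven's classification of uniform $2$-bundles on $\mathbb{P}^2$. The tangent bundle is unsplit, so the minimal rank of an unsplit uniform bundle is exactly $n$, and up to dual and twist the only such bundle is $T_{\mathbb{P}^n}$. I will quote these classical results as stated in the introduction and the references \cite{EHS, ellia, sato, ven}.

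\emph{Converse.} Suppose $X\not\cong\mathbb{P}^n$; I show that uniqueness fails. Let $r$ be the minimal rank of an unsplit uniform bundle on $X$.

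\emph{Case $r<n$.} Then $T_X$, which has rank $n$, is not of minimal rank, so it cannot be the unique such bundle.

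\emph{Case $r=n$.} Take a bundle $E$ from Example \ref{ex:class-E}. It is uniform of rank $n$ with splitting type $(1,0,\dots,0)$, and by Theorem \ref{non-homo2} it is non-homogeneous. Since $T_X$ is homogeneous, and twists and duals of homogeneous bundles are homogeneous, $E$ is not isomorphic to any twist of $T_X$ or $T_X^\vee$.

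It remains to check that $E$ is unsplit. Its splitting type is $(1,0,\dots,0)$, so if $E$ split it would be $\mathcal{O}_X(1)\oplus\mathcal{O}_X^{\oplus n-1}$, which has $h^0=\dim H^0(X,\mathcal{O}_X(1))+n-1=N+1+n-1=N+n$. But the proof of Theorem \ref{non-homo2} computes $h^0(X,E)=n+1$. Since $N>n\ge 2$, we have $N+n\ne n+1$, so $E$ does not split. Alternatively, one can argue that a split $E$ would be homogeneous, contradicting Theorem \ref{non-homo2}. The second argument is cleaner, since every split bundle on $G/P$ is homogeneous.

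Hence $E$ is a second unsplit uniform bundle of minimal rank, not equivalent to $T_X$ up to dual and twist. This proves the converse.

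\emph{Main obstacle.} The only real gap is the case distinction on $r$ in the converse. In both cases I need $T_X$ to be unsplit, which holds because $T_X$ is stable of rank $n\ge 2$ on a generalized Grassmannian. With that, "$T_X$ unique of minimal rank" fails either way. I expect the bookkeeping of the classical $\mathbb{P}^n$ classification, particularly the small cases, to be the only delicate point.
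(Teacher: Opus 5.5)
Your proof is correct, and its skeleton is the paper's. The forward direction is the classical classification on $\mathbb{P}^n$. For $X\not\cong\mathbb{P}^n$, the hypothesis forces the minimal rank to be $n$ (this is your case $r<n$). A bundle $E$ from Example~\ref{ex:class-E} is then a second unsplit uniform bundle of rank $n$.

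The genuine difference is how you separate $E$ from split bundles and from twists of $T_X$ and $T_X^\vee$. You route both through Theorem~\ref{non-homo2}: split bundles and twists of $T_X$ and $T_X^\vee$ are homogeneous, while $E$ is not. Your $h^0$ count $N+n\neq n+1$ is also a valid direct proof of unsplitness.

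The paper argues numerically instead. First, $c_i(E)=H^i$ rules out $\mathcal{O}_X(1)\oplus\mathcal{O}_X^{\oplus n-1}$, since that bundle has $c_2=0$. Second, comparing splitting types with $(2,1,\dots,1,0,\dots,0)$ forces any identification to be $T_X\cong E(1)$. Then $c_1(T_X)=(n+1)H$, and Kobayashi--Ochiai gives $X\cong\mathbb{P}^n$.

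Your route is shorter once Theorem~\ref{non-homo2} is available. However, it inherits that theorem's equivariance argument. It also needs $\mathcal{O}_X(1)$ to be $G$-linearized, which holds for simply connected $G$ as in the paper's conventions. The paper's route is independent of Theorem~\ref{non-homo2} and shows that $E$ is already distinguished from $T_X$ by its splitting type and $c_1$.
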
 
	\begin{proof}
The only if part follows from the classification of uniform bundles of rank at most $n$ on $\mathbb{P}^n$. For the if part, recall first that $T_X$ is an unsplit uniform bundle of splitting type $(2,1^{\times p},0,\ldots,0)$, where $p=\deg T_X-2$. The hypothesis that $T_X$ is the unique unsplit uniform bundle on $X$ of minimal rank implies that the minimal rank for unsplit uniform bundles equals $n(=\dim X)$. \\

Suppose $X$ is not isomorphic to $\mathbb{P}^n$. Let $E$ be a uniform bundle constructed in Example \ref{ex:class-E}. By construction, $E$ has the same Chern classes as $T_{\mathbb{P}^N}(-1)|_X$, hence
\[
c_i(E)=c_i(T_{\mathbb{P}^N}(-1)|_X)=H^i,\quad 1\le i\le n,
\]
 where $H$ is the ample generator of $\mathrm{Pic}(X)$. Since the splitting type of $E$ is $(1,0,\dots,0)$, comparing these Chern classes with those of $\oh_X(1)\bigoplus \oh_X^{\oplus (n-1)}$ shows that $E$ does not split. Thus $E$ is an unsplit uniform bundle on $X$ of minimal rank.
 
 Assume that for some $a,b\in\mathbb Z$, $E(a)$ or $E^*(b)$ is isomorphic to $T_X$. Then the unique $\mathcal{O}(2)$-factor would appear in the splitting type of $E(a)$ or $E^*(b)$, which forces $T_X \cong E(1)$. So the index of $X$ is $n + 1$, as $c_1(T_X)=c_1(E(1))=(n + 1)H$. By the Kobayashi-Ochiai theorem, $X$ is isomorphic to $\mathbb{P}^n$, contradicting the hypothesis. 
 Therefore, if $X$ is not isomorphic to $\mathbb{P}^n$, then up to dual and twist, $T_X$ is not the unique unsplit uniform bundle on $X$ of
 minimal rank.   
 This completes the proof of sufficiency.
\iffalse
Suppose $X$ is not isomorphic to a projective space. Then $X$ admits an embedding into $\mathbb{P}^N$ via the very ample line bundle $\mathcal{O}_X(1)$, and clearly $N > n$. Note that $T_{\mathbb{P}^N}(-1)$ is generated by global sections, hence so is $T_{\mathbb{P}^N}(-1)|_X$. Since $\mathrm{rk}(T_{\mathbb{P}^N}(-1)|_X)>n$, \cite[Corollary 5.5]{EH} tells us that $T_{\mathbb{P}^N}(-1)|_X$ contains $\mathcal{O}_X$ as a subbundle, yielding a quotient bundle $M_1$ of $T_{\mathbb{P}^N}(-1)|_X$. Meanwhile, $X$ is a generalized Grassmannian, so every line on $X$ is a line in $\mathbb{P}^N$. As $T_{\mathbb{P}^N}(-1)$ is uniform on $\mathbb{P}^N$, its restriction $T_{\mathbb{P}^N}(-1)|_X$ is a uniform bundle on $X$ with splitting type $(1,0,\dots,0)$, and hence $M_1$ is uniform of rank $N-1$ with splitting type $(1,0,\dots,0)$. Moreover, it is readily seen that $M_1$ is also globally generated. If $\mathrm{rk}(M_1)>n$, we apply \cite[Corollary 5.5]{EH} again to $M_1$, obtaining another $\mathcal{O}_X$-subbundle and denoting its quotient by $M_2$.
Iterating this procedure, we eventually obtain a quotient bundle %$E:=M_{N-\dim X}$ 
$E$ of $T_{\mathbb{P}^N}(-1)|_X$ with $\mathrm{rk}(E)=n$, which is a uniform bundle on $X$ with splitting type $(1,0,\dots,0)$.\\
\textbf{Claim:} $E$ does not split and no twist of $E$ or its dual can be isomorphic to $T_X$.
\fi

\end{proof}
	%\textbf{Acknowledgements:} The authors would like to thank  Rong Du for fruitful discussions and valuable suggestions on this article.
	
	%introducing this problem to us and for their very helpful discussions during the last several months. We also thank Peng Ren for  valuable discussions and suggestions. The authors are  grateful to Gianluca Occhetta for his generosity of sharing ideas with us. His  suggestions improve the first version of this article hugely.

	\bibliography{ref}
	\bibliographystyle{plain}
	
	\end{document}